\newcommand{\nc}{\newcommand}
\numberwithin{equation}{section}
\newtheorem{thm}{Theorem}[section]
\newtheorem{prop}[thm]{Proposition}
\newtheorem{lem}[thm]{Lemma}
\newtheorem{cor}[thm]{Corollary}
\theoremstyle{remark}
\newtheorem{rem}[thm]{Remark}
\newtheorem{definition}[thm]{Definition}
\nc{\gl}{\mathfrak{gl}}
\nc{\GL}{\mathfrak{GL}}
\nc{\g}{\mathfrak{g}}
\nc{\gh}{\widehat\g}
\nc{\h}{\mathfrak{h}}
\nc{\la}{\lambda}
\nc{\al}{\alpha }
\nc{\be}{\beta }
\nc{\ve}{\varepsilon }
\nc{\om}{\omega }
\nc{\ta}{\theta}
\nc{\ch}{{\mathop {\rm ch}}}
\nc{\Tr}{{\mathop {\rm Tr}\,}}
\nc{\Id}{{\mathop {\rm Id}}}
\nc{\ad}{{\mathop {\rm ad}}}
\nc{\bra}{\langle}
\nc{\ket}{\rangle}
\nc{\x}{{\bf x}}
\nc{\bm}{{\bf m}}
\nc{\bs}{{\bf s}}
\nc{\ba}{{\bf a}}
\nc{\bb}{{\bf b}}
\nc{\bk}{{\bf k}}
\nc{\bp}{{\bf p}}
\nc{\pa}{\partial}
\nc{\ld}{\ldots}
\nc{\cd}{\cdots}
\nc{\hk}{\hookrightarrow}
\nc{\T}{\otimes}
\nc{\gr}{\mathrm{gr}}
\nc{\ov}{\overline}
\nc{\cO}{\mathcal O}
\nc{\msl}{\mathfrak{sl}}
\nc{\mgl}{\mathfrak{gl}}
\nc{\U}{\mathrm U}
\nc{\V}{\EuScript V}
\nc{\cL}{\mathcal{L}}
\nc{\Res}{\mathrm{Res\ }}
\newcommand{\bZ}{{\mathbb Z}}
\newcommand{\fh}{{\mathfrak h}}
\newcommand{\fg}{{\mathfrak g}}
\newcommand{\fn}{{\mathfrak n}}
\begin{document}

\title[Generalized Weyl modules for twisted current algebras]
{Generalized Weyl modules for twisted current algebras}

\author{Evgeny Feigin}
\address{Evgeny Feigin:\newline
Department of Mathematics,\newline
National Research University Higher School of Economics,\newline
Vavilova str. 7, 117312, Moscow, Russia,\newline
{\it and }\newline
Tamm Theory Division, Lebedev Physics Institute
}
\email{evgfeig@gmail.com}
\author{Ievgen Makedonskyi}
\address{Ievgen Makedonskyi:\newline
Department of Mathematics,\newline
National Research University Higher School of Economics,\newline
Vavilova str. 7, 117312, Moscow, Russia
}
\email{makedonskii\_e@mail.ru}

\begin{abstract}
We introduce the notion of generalized Weyl modules for twisted current algebras.
We study their representation-theoretic and combinatorial properties and connection to the theory of nonsymmetric Macdonald polynomials.
As an application we compute the dimension of the classical Weyl modules in the remaining unknown case.
\end{abstract}

\maketitle

\section*{Introduction}

Let $\fg^{af}$ be an affine Lie algebra. It has a natural $\mathbb{Z}$-grading
$\fg^{af}=\bigoplus_{k=-\infty}^{\infty}\fg(k)$; in particular, $\fg(0)=\fg_0 \oplus \mathbb{C} K \oplus \mathbb{C} d$,
where $\fg_0$ is a finite-dimensional simple Lie algebra, $K$ is central and $d$ is the degree element.
Consider the subalgebra $\fg^{af}_{\geq 0}=\fg_0 \oplus \bigoplus_{k=1}^{\infty}\fg(k)$ ($\fg^{af}_{\geq 0}$
coincides with a maximal parabolic subalgebra modulo $K$ and $d$).
If $\fg^{af}$ is untwisted, then $\fg^{af}_{\geq 0}$ is the Lie algebra of polynomial currents:
$\fg^{af}_{\geq 0} \simeq \fg_0 \otimes \mathbb{C}[t]$.
If $\fg^{af}$ is twisted, then  $\fg^{af}_{\geq 0}$ is called twisted current algebra.
Let $\Delta$ be the root system of $\fg_0$, $\Delta_+$ ($\Delta_-$) be the set of positive (negative) roots and
$\delta$ be the basic imaginary root. We fix a Cartan decomposition $\fg_0=\fn_- \oplus \fh \oplus \fn_+$,
where $\fh$ is the Cartan subalgebra, $\fn_+$ ($\fn_-$) is the linear span of all positive root vectors
(negative root vectors).
Then all real roots of $\fg^{af}$ are of the form $l\alpha + k \delta$,
$\alpha \in \Delta$, $k \in \mathbb{Z}$, $l \in \{1,2\}$ (note that $l=1$ unless $\fg^{af}$ is of type $A_{2n}^{(2)}$).
We fix a vector $e_{\alpha}$ from each real $\alpha$-root space.
We denote by $\fh_{k \delta}\subset \fg^{af}$ the (imaginary) root space corresponding to the root $k \delta$.
The classical Weyl modules are labeled by dominant weights $\lambda$ of $\fg_0$ (see \cite{CIK,CFS,FK}).
The module $W(\lambda)$ is defined as the cyclic $\fg^{af}_{\geq 0}$-module with a generator $v$ and the following set of relations:
\begin{gather*}
\fh_{k\delta}v=0, k \geq 1;\ h v=\la(h)v\text{ for all } h\in\fh;\\
e_{\alpha+k\delta} v=0, k \geq 0;\ e_{-\alpha}^{\langle \alpha^\vee, \lambda \rangle+1}v=0
\text{ for all } \al\in\Delta_+.
\end{gather*}

These modules for untwisted affine Lie algebras were extensively studied (see \cite{CP,CL,FL1,FL2}). For simply-laced algebras
Weyl modules are isomorphic to level one Demazure modules.
Weyl modules for twisted Lie algebras were introduced in \cite{CFS}.

It was proven in \cite{S,I,CI} that the characters of Weyl modules coincide with the specializations of 
nonsymmetric Macdonald polynomials at $t=0$.
%The proof works for all Weyl modules with the only exception $\fg^{af}$ of type $A_{2n}^{(2)}$ and $\langle \lambda,\alpha_n \rangle \in 2 \mathbb{Z}$.
We introduce the following generalization of Weyl modules. The generalized Weyl modules are modules over the algebra $\fn^{af}$ spanned by
all root vectors $e_{\beta}$ corresponding to positive roots $\beta$ of $\fg^{af}$.
Assume that $\fg$ is of dual untwisted type and let $\Delta^s$ ($\Delta^l$) be the set of short (long) roots of $\Delta$.
We define ${\rm len}(\alpha)=1$ for $\alpha \in\Delta^s$ and  ${\rm len}(\alpha)=j$ for $\alpha \in\Delta^l$, where $j$ is the ratio of
squared lengths of long and short roots.
For an element $\sigma$ in the Weyl group $W$ of $\fg_0$  and $\alpha \in \Delta_+$ we
define the action of $\sigma$ on the set $\Delta_+\cup(\Delta_-^s+\delta)\cup(\Delta_-^l+j\delta)$ as follows:
\begin{gather*}
\widehat \sigma(\alpha)=   \begin{cases}
                             {\sigma(\alpha)}, & \text{ if } \sigma(\alpha) \in \Delta_+ \\
                             {\sigma(\alpha)}+ {\rm len}(\alpha)\delta, & \text{ if } \sigma(\alpha) \in \Delta_-.
                            \end{cases},\\
\widehat \sigma(-\alpha+{\rm len}(\alpha)\delta)=   \begin{cases}
                             {\sigma(-\alpha)+{\rm len}(\alpha)\delta}, & \text{ if } \sigma(-\alpha) \in \Delta_- \\
                             {\sigma(-\alpha)}, & \text{ if } \sigma(-\alpha) \in \Delta_+ \\
                            \end{cases}.									
\end{gather*}

For $\beta \in \Delta_+\cup(\Delta_-^s+\delta)\cup(\Delta_-^l+j\delta)$ we define $\widehat \sigma(e_{\beta})=e_{\widehat \sigma(\beta)}$.

\begin{definition}
Let $\fg^{af}$ is not of type $A^{(2)}_{2n}$. Then 
for an arbitrary $\fg_0$-weight $\mu$ let us write $\mu=\sigma(\lambda)$, $\sigma\in W$, $\lambda$ is antidominant. Then the (generalized) Weyl module
$W_{\mu}$
is the cyclic $\fn^{af}$ module with a generator $v$ and the following relations:
\begin{gather*}
\fh_{k\delta}v=0, k \geq 1;\\
\widehat{\sigma}(e_{-\alpha+{\rm len}(\alpha) \delta}) v=0; \
(\widehat{\sigma}(e_{\alpha}))^{-\langle \alpha^\vee, \lambda \rangle+1} v=0,\ \alpha \in \Delta_+.
\end{gather*}
%This module will be called the Weyl module of extremal weight $w(\Lambda)$.
\end{definition}

We also define the generalized Weyl modules in types $A_{2n}^{(2)}$
(see Definition \ref{weylmodulesmix}).
In this there exist two definitions of the classical Weyl modules, corresponding to two natural choices of the maximal
parabolic subalgebra (see \cite{CFS,CIK,FK}).%  (the two choices naturally correspond to the affine types $A_{2n}^{(2)}$ and $A_{2n}^{(2){\dag}}$).
The family of the generalized Weyl modules include both types of classical Weyl modules.

Dimensions of classical Weyl modules were computed in \cite{CL,N,FK, CIK}. In these papers
the dimensions were computed  using a link to the theory of Demazure modules.
The only remaining unknown case was $\fg^{af}$ of type $A_{2n}^{(2)}$ and $\langle \lambda,\alpha_n \rangle \in 2 \mathbb{Z}$.
Using our approach we compute the dimensions in this remaining case.

The properties of generalized Weyl modules are closely related to the theory of nonsymmetric Macdonald polynomials
$E_{\lambda}(x,q,t)$ \cite{Ch1,Ch2,OS}. More precisely, we prove the following Theorem (see \cite{FM3} for
the untwisted case).

\medskip

\noindent{\bf Theorem A}. Let $\la$ be an anti-dominant $\fg_0$-weight, $\sigma\in W$, $\fg^{af}$ is not of type
$A_{2n}^{(2)}$.
% and $\langle \lambda,\alpha_n \rangle \in 2 \mathbb{Z}+1$.
Then:
\begin{enumerate}
\item $\dim W_{\sigma(\la)}=\dim W_\la$.
\item ${\rm ch} W_{w_0\la}=w_0E_{\la}(x,q^{-1},\infty)$.
\item For any $i=1,\dots,{\rm rk}(\fg)$ such that $\langle\la,\al_i^\vee\rangle < 0$ the module $W_{\sigma(\la)}$
can be decomposed into subquotients of the form $W_{\kappa(\la+\omega_i)}$, $\kappa\in W$. The subquotients are labeled
by certain alcove paths and the number of subquotients is equal to the dimension of the fundamental classical
Weyl module $W(\om_i)$.
\end{enumerate}

\medskip

Theorem A with minor modifications holds in types $A_{2n}^{(2)}$ as well. In particular, we prove the following
new dimension formula for the classical Weyl modules:
\begin{equation}\label{A2dim}
\dim W(\sum_{i=1}^{n-1} m_i\om_i+2m_n\om_n)=\prod_{i=1}^n \binom{2n+1}{i}^{m_i}.
\end{equation}

We note that all the Weyl modules we consider in the paper are local. However it is straightforward to define
the global generalized Weyl modules over twisted current algebras analogous to the untwisted case \cite{FMO}.
These modules can be studied in same way as in \cite{FMO} using the results of this paper.

The paper is organized in the following way. In Section \ref{OSformula} we recollect all the needed material
on alcove paths \cite{RY,GL} and combinatorial formula of Orr and Shimozono \cite{OS}. In Section \ref{GenWeylMod} we define
the class of generalized Weyl modules in the twisted settings and study their properties. The proof of
the main theorem is reduced to the case of rank one and rank two level zero algebras $\fg_0$.
In Section \ref{LRC} we work out the case of low rank twisted algebras.

\section{Orr-Shimozono formula}\label{OSformula}

\subsection{Twisted Quantum Bruhat Graph}
Let $\Delta=\Delta_+ \sqcup \Delta_-$ be the root system and $X$ be the weight lattice of a finite-dimensional simple nonsimply-laced Lie algebra $\fg_0$,
$W=W(X)$ be its Weyl group. We denote by $\al_i$ and $\om_i$, $i=1,\dots,{\rm rk}(\fg_0)$ simple roots and fundamental weights
of $\fg_0$.  We write $X=X_+\sqcup X_-$, where $X_+$ is the positive cone spanned by $\om_i$ and $X_-=-X_+$.  Let $s_1, \dots, s_n$, $n={\rm rk}(\fg_0)$
be the set of simple reflections in $W$. For a root $\alpha$ we denote by $s_{\alpha}$ the reflection at this root.
For $w \in W$ let $l(w)$ be the length of the element $w$.
The twisted quantum Bruhat graph \cite{Lu,BFP} (tQBG for short) is the labeled graph whose set of vertices is $W$ with edges
$w \stackrel{\alpha}{\longrightarrow} w s_{\alpha}$ such that
\begin{itemize}
\item $l(ws_{\alpha})=l(w)+1$: an edge of the usual Bruhat graph;
\item $l(ws_{\alpha})=l(w)-\langle 2\rho^{\vee},\alpha \rangle+1$: a {\it quantum edge}.
\end{itemize}
Here $2\rho^\vee=\sum_{\alpha\in \Delta_+}\alpha^\vee$, the sum of positive coroots. Note that this is the standard quantum Bruhat graph for the dual
Lie algebra $\fg_0^\vee$. The difference between the QBG and tQBG is the interchange of roots and coroots in the second condition (the QBG version contains
$\langle 2\rho,\alpha^{\vee}\rangle$).

\subsection{Alcove paths}\label{alcovepaths}
Our first goal is to define the notion of the twisted quantum alcove path.
Let $W^a$ and $W^e$ be the affine and the extended affine Weyl groups for $\fg_0^\vee$. In particular, $W^e=W^a\rtimes\Pi$,
$\Pi=W^e/W^a$ and
we have the natural embedding $X\subset W^e$ (see \cite{OS,FM3}). For $\la\in X$ we denote by $t_\la$ the corresponding element in $W^e$.
Each element $w\in W^e$ defines the set of affine roots $\beta_1(w),\dots,\beta_l(w)$, $l=l(w)$.
Given a starting point $u\in W^e$, each subset $J\subset\{1,\dots,l\}$ produces an alcove walk $p_J$, which is a sequence of elements
$(\sigma_0,\dots,\sigma_l)$ of $W^e$, $\sigma_0=u$.
Now a walk $p_J$ defines an alcove path $(z_0,\dots,z_r)$, see e.g. \cite{FMO}, formula $(2.3)$.
In particular, we denote $z_0=uw$ and $z_r=:{\rm end}p_J$.
We denote the resulting path by the same symbol $p_J$.

Recall that $W^a$ is the semi-direct product of the finite Weyl group $W$ and the lattice $X$.  For $w\in W^e$ we write
$w=\pi t_{{\rm wt} (w)}{\rm dir} (w)$, where $\pi\in \Pi$, ${\rm dir} (w)\in W$.
We thus have a natural projection ${\rm dir}:W^e\to W$.

Let us define the set $t\mathcal{QB}(u,w)$ of the  twisted quantum alcove paths.
We say that a path $p_J$ is a twisted quantum alcove path  if the projection
${\rm dir}(p_J)$ is a path in the twisted quantum Bruhat graph of $W$.

Now let us define the set $t\mathcal{QB}(u,w)$ for types $A^{(2)}_{2n}$ and ${A^{(2)}_{2n}}^\dag$.
We take $W$ to be the Weyl group of type $B_n$ and the tQBG of type $D_{n+1}^{(2)}$.
Then we say that $p_j$ is a twisted quantum alcove path if its projection onto the tQBG of type $D^{(2)}_{n+1}$
is the path in this graph and for  $A^{(2)}_{2n}$ (${A^{(2)}_{2n}}^\dag$) there is no quantum  (Bruhat)
edges coming from $\beta_i$'s with odd imaginary part (see \cite{OS}).

We say that a path $p_J \in t\widetilde{\mathcal{QB}}(u,w)$ if the projection to tQBG is a path
in the reversed quantum Bruhat graph of $W$.
Let $J^-\subset J$ be the set of $j_m\in J$ such that the root ${\rm Re}(z_m\beta_{j_m})$ is negative
(here ${\rm Re}$ denotes the real part of an affine root).
We note that $j\in J^-$ if and only if the corresponding edge in the quantum Bruhat graph is quantum.

Let $\delta$ be the basic imaginary root. For any element of the affine root lattice $\mu + N\delta$,
where $\mu\in X$  we denote $\deg(\mu + N\delta)=N$.
For an alcove path $p_J$ we define ${\rm qwt}(p_J)=\sum_{j\in J^-} \beta_j$.

\begin{definition} \label{Cdef}
For any $u,w \in W^e$ we define:
\[
C_u^{w}(x,q)=\sum_{p_J \in t\mathcal{QB}(u,w)}x^{wt(end(p_J))}q^{\deg({\rm qwt}(p_J))}.
\]
\end{definition}

%\begin{rem}
%It is easy to see that for any $\pi\in\Pi$ one has $C_u^{\pi w}=C_u^w$.
%\end{rem}

In the rest of this section we describe the properties of the function $C_u^w$.
%For a weight $\mu\in X$ recall  the corresponding element $t_\mu\in W^a$.
%The following Lemma is obvious.
One has
%\begin{lem}\label{shiftC}
%For any $\mu \in X$:
$C_{t_\mu u}^{w}=x^\mu C_u^{w}.$
%\end{lem}

\begin{thm}
\cite{OS}\label{specializationOS0}
Let $\lambda \in X$ be an antidominant weight.
%We fix a reduced decomposition $t_\lambda=\pi s_{i_1} \dots s_{i_l}$.
Then
\begin{enumerate}
\item $E_{\lambda}(x; q, 0)=C_{\rm id}^{t_\la}$.
\item $E_{\lambda}(x; q^{-1}, \infty)=\sum_{p_J\in t\widetilde{\mathcal{QB}}(\lambda)}x^{wt(end(p_J))}q^{\deg({\rm qwt}(p_J))}$,
\item $E_{\lambda}(x; q^{-1}, \infty)=w_0 C_{w_0}^{t_\la}.$
\end{enumerate}
\end{thm}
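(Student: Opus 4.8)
The plan is to deduce all three identities from the general alcove-walk formula of Orr and Shimozono (recollected in this Section), which writes $E_\mu(x;q,t)$ as a sum over all alcove walks $p_J$, indexed by subsets $J\subset\{1,\dots,l(t_\la)\}$, of $x^{wt(end(p_J))}$ times a product of local $q,t$-factors, one factor per fold. Each factor is a rational function of the form $(1-t)/(1-q^a t^b)$, with $a,b$ read off from the affine root crossed at that step and with the precise numerator/denominator determined by the sign of the fold. For statement (i) I would simply specialize $t\to 0$: each factor tends to $0$, $1$, or a pure power of $q$ according to the sign of $b$, so that a walk contributes nonzero precisely when it has no forbidden folds. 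By the definition of a quantum edge this is exactly the requirement that the projection ${\rm dir}(p_J)$ be a path in the twisted quantum Bruhat graph, i.e. $p_J\in t\mathcal{QB}({\rm id},t_\la)$, and the surviving $q$-powers accumulate to $q^{\deg({\rm qwt}(p_J))}$ because, as noted before the theorem, an index $j$ lies in $J^-$ exactly when its edge is quantum and ${\rm qwt}(p_J)=\sum_{j\in J^-}\beta_j$. Comparing with Definition \ref{Cdef} gives $E_\la(x;q,0)=C_{\rm id}^{t_\la}$.

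For statement (ii) I would run the dual specialization $t\to\infty$ together with $q\mapsto q^{-1}$. Using the functional equation relating $E_\mu(x;q,t)$ to $E_\mu(x;q^{-1},t^{-1})$, the limit $t^{-1}\to 0$ interchanges the roles of positive and negative folds, so the surviving walks are now exactly those whose projection is a path in the reversed twisted quantum Bruhat graph. This is precisely the set $t\widetilde{\mathcal{QB}}(\la)$, and the same degree bookkeeping again produces the weight $q^{\deg({\rm qwt}(p_J))}$, yielding the stated sum.

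Statement (iii) then follows from (ii) by the $w_0$-symmetry of the quantum Bruhat graph. Reversing each edge of the tQBG and conjugating vertices by the longest element $w_0$ is an involution exchanging the graph with its reverse; transported to alcove walks this gives a weight-preserving bijection between $t\widetilde{\mathcal{QB}}(\la)$ and the set $t\mathcal{QB}(w_0,t_\la)$ computed by $C_{w_0}^{t_\la}$, under which the endpoint weight is transformed by $w_0$. Acting by $w_0$ on the $x$-variables then identifies the sum of (ii) with $w_0 C_{w_0}^{t_\la}$, which is the desired equality.

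The main obstacle I expect lies in the bookkeeping in the twisted, and especially the $A_{2n}^{(2)}$ and ${A_{2n}^{(2)}}^\dag$, settings: one must check that the extra condition singling out quantum (resp. Bruhat) edges coming from the $\beta_i$ with odd imaginary part is exactly what the $t\to 0$ (resp. $t\to\infty$) limit of the factors enforces, and that the interchange of roots and coroots built into the tQBG is compatible with the $q,t$-exponents of the master formula. Matching these $q$-powers on the nose, rather than up to an overall monomial, is the delicate point; once the factor-by-factor limits are pinned down the remaining steps are formal.
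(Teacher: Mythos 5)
This theorem is stated in the paper with the citation \cite{OS} and no proof is given: it is imported wholesale from Orr--Shimozono, so there is no in-paper argument to compare yours against. That said, your sketch is an accurate summary of the strategy actually used in \cite{OS}: all three identities come from specializing the Ram--Yip/Orr--Shimozono alcove-walk formula, with the $t\to 0$ limit killing exactly the walks whose projection leaves the twisted quantum Bruhat graph and turning the surviving local factors into the powers $q^{\deg({\rm qwt}(p_J))}$, the $t\to\infty$, $q\mapsto q^{-1}$ limit doing the same for the reversed graph, and (iii) following from (ii) via the fact that $w_0$ reverses arrows in the tQBG (the paper itself invokes this reversal property, citing \cite{LNSSS2}, in the proof of Corollary \ref{representationinterpretationinfinity}). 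The caveats you flag are the right ones: in the Koornwinder types $A_{2n}^{(2)}$ and ${A_{2n}^{(2)}}^\dag$ the extra parameters produce additional local factors whose limits impose the parity condition on edges coming from $\beta_i$ with odd imaginary part, and verifying that the surviving $q$-exponents equal $\deg({\rm qwt}(p_J))$ exactly (not merely up to a global monomial) is where the bookkeeping in \cite{OS} is concentrated. As a proof proposal your outline is sound but remains a reduction to the computations of \cite{OS} rather than a self-contained argument; since the paper treats the result as a black box, that is the appropriate level of detail here.
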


%For an antidominant weight $\lambda$, let $r_i=l(t_{-\om_i})$ and 
Let $r_i=l(t_{-\om_i})$  and let $\beta_j^i=\beta_j(t_{-\om_i})$, $j=1,\dots,r_i$.

\begin{lem}\label{firstbetas}
For any $\la\in X_-$ the sequence of roots $\beta_j(t_{\lambda-\omega_i})$ is equal to
\[
\beta_1^i + \langle {\beta_1^i}^\vee,\lambda \rangle \delta, \dots, \beta_{r_i}^i + \langle {\beta_{r_i}^i}^\vee,\lambda \rangle \delta,\
\beta_1,\dots,\beta_a,
\]
where $\beta_1,\dots,\beta_a$ is the sequence of $\beta$'s for $t_\la$.
\end{lem}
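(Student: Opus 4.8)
The plan is to deduce the statement from a length-additive factorization of $t_{\la-\om_i}$ in $W^e$. Since $\om_i$ is dominant and $\la\in X_-$, both $-\om_i$ and $\la$ are antidominant, hence so is their sum $\la-\om_i$. For antidominant $\mu$ the translation length $l(t_\mu)=\langle -\mu,2\rho^\vee\rangle$ is additive on the antidominant cone, so
\[
l(t_{\la-\om_i})=l(t_{-\om_i})+l(t_\la)=r_i+a.
\]
Because translations commute we have $t_{\la-\om_i}=t_{-\om_i}t_\la$, and this length additivity shows that concatenating a reduced word for $t_{-\om_i}$ with one for $t_\la$ produces a reduced word for $t_{\la-\om_i}$. (The length-zero $\Pi$-components play no role: they contribute no affine roots and multiply consistently across the factorization.)

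First I would recall the root sequence $\beta_j(w)$ attached to a reduced word $w=s_{p_1}\cdots s_{p_l}$, namely $\beta_k(w)=s_{p_l}\cdots s_{p_{k+1}}(\al_{p_k})$, as used in Section \ref{alcovepaths} following \cite{OS,FMO}. Applying this to the concatenated reduced word for $t_{-\om_i}t_\la$ splits the sequence into two blocks. For an index lying in the $t_\la$-block one sees only the reflections of $t_\la$, so these roots are exactly $\beta_1,\dots,\beta_a$, reproducing the tail of the claimed list. For an index $m\le r_i$ in the $t_{-\om_i}$-block the same formula yields $t_\la^{-1}\bigl(\beta_m^i\bigr)=t_{-\la}\bigl(\beta_m^i\bigr)$, since the inner reflections rebuild $\beta_m^i=\beta_m(t_{-\om_i})$ while the outer product of the $t_\la$-reflections equals $t_\la^{-1}$.

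It then remains to evaluate the translation action on each head root. Writing $\beta_m^i$ with real part a root of the dual affine system and integral $\delta$-coefficient, the translation $t_{-\la}$ fixes the real part and shifts the $\delta$-coefficient by the pairing of that real part with $\la$; concretely $t_{-\la}(\beta_m^i)=\beta_m^i+\langle{\beta_m^i}^\vee,\la\rangle\delta$. Combined with the previous paragraph this gives exactly the asserted sequence
\[
\beta_1^i+\langle{\beta_1^i}^\vee,\la\rangle\delta,\dots,\beta_{r_i}^i+\langle{\beta_{r_i}^i}^\vee,\la\rangle\delta,\ \beta_1,\dots,\beta_a.
\]

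The computational core is routine once the bookkeeping is fixed, so the hard part will be keeping the conventions mutually consistent. In particular, the reading direction in $\beta_k(w)$ is what forces the $t_\la$-block (rather than the $t_{-\om_i}$-block) to be the one left untwisted, and the precise normalization of the affine translation action in the dual, twisted root system is what makes the $\delta$-shift come out as $\langle{\beta_m^i}^\vee,\la\rangle$ with the correct sign. Verifying length additivity and reducedness of the concatenation in $W^e$ rather than in $W^a$ also requires a small check on the $\Pi$-part, but this does not affect the root sequence itself.
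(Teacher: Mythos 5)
Your proof is correct, and it is the standard argument: length-additivity of $l(t_\mu)$ on the antidominant cone, so that concatenating reduced words for $t_{-\om_i}$ and $t_\la$ gives a reduced word for $t_{\la-\om_i}$, followed by the observation that the tail block reproduces the $\beta$'s of $t_\la$ while each head root gets conjugated by $t_{-\la}$, which shifts its $\delta$-coefficient by $\langle({\rm Re}\beta_m^i)^\vee,\la\rangle$. The paper states Lemma \ref{firstbetas} without proof (implicitly deferring to the analogous untwisted statement in \cite{FM3}), so there is no argument to compare against; yours supplies exactly the bookkeeping the authors leave implicit, including the correct handling of the $\Pi$-component, whose diagram-automorphism twist of the first block cancels against the $\Pi$-part of $t_\la^{-1}$ so that the net effect on the head roots is indeed just $t_{-\la}$.
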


Recall that for an affine root $\beta$ we write $\beta={\rm Re}(\beta) +\deg (\beta)\delta$.
For $\gamma\in\Delta$ we denote by  ${\rm len}(\gamma)$ the normalized length of $\gamma$, so
${\rm len}(\gamma)=1,2,3$ ($1$ for short roots $\gamma$).

\begin{prop}\label{descriptionbeta}
$a).$\ For any reduced decomposition of $t_{-\omega_i}$ the roots $\beta_j(t_{-\om_i})$ satisfy the following properties:
\begin{itemize}
\item $\lbrace {\rm Re} \beta_j^i \rbrace=\lbrace \gamma \in \Delta_-|\langle \gamma^\vee, \omega_i \rangle<0 \rbrace$,
\item $|\lbrace j|{\rm Re}\beta_j^i=\gamma \rbrace|=-\langle \gamma^\vee, \omega_i \rangle$,
\item For any $\gamma$ the set $\lbrace \beta_j|{\rm Re}\beta_j^i=\gamma \rbrace$ is equal to
$\lbrace \gamma+{\rm len}(\gamma)\delta,  \dots, \gamma - {\rm len}(\gamma)\langle \gamma^\vee, \omega_i \rangle \delta\rbrace$.
\end{itemize}

$b).$\ There exists a reduced decomposition of $t_{-\omega_i}$ giving the following order on $\beta$'s.
We set $i_1=i$, and let $i_k$, $k=2, \dots, n$, be some ordering of the set $\lbrace 1,\dots, n \rbrace \backslash \lbrace i \rbrace$.
Let us write $(\beta_j^i)^{\vee}=-a_{i_1}\alpha_{i_1}^{\vee} - \dots - a_{i_n}\alpha_{i_n}^{\vee}+ Dd$.
Then the order on $\beta$'s is given
by the lexicographic order on the vectors $({\rm len}({\rm Re}\beta_j^i) \frac{a_{i_1}}{D},\frac{a_{i_2}}{a_{i_1}}, \dots, \frac{a_{i_n}}{a_{i_1}})$.
\end{prop}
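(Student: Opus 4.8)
The plan is to prove the two parts essentially independently: part $a)$ is a direct computation of the inversion multiset of the translation $t_{-\om_i}$, while part $b)$ produces a reduced decomposition realizing the stated order by checking that this order is convex.

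First I would recall that for any $w\in W^e$ the multiset $\{\beta_1(w),\dots,\beta_{l(w)}(w)\}$ is exactly the set of positive affine roots sent to negative ones by $w^{-1}$, and that this multiset is independent of the chosen reduced decomposition; hence for part $a)$ it suffices to compute it once for $w=t_{-\om_i}$. Since $t_{-\om_i}$ is a translation it fixes the real part of every affine root and only shifts the imaginary part by an amount proportional to the pairing with $\om_i$. Reading off which positive affine roots become negative then shows that the real parts occurring are precisely the $\gamma\in\Delta_-$ with $\langle\gamma^\vee,\om_i\rangle<0$, each with multiplicity $-\langle\gamma^\vee,\om_i\rangle$. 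That the admissible imaginary parts for a fixed real part $\gamma$ form the progression $\gamma+{\rm len}(\gamma)\delta,\dots,\gamma-{\rm len}(\gamma)\langle\gamma^\vee,\om_i\rangle\delta$ is forced by the structure of the affine root system of $\fg_0^\vee$, in which consecutive real roots with fixed real part $\gamma$ are spaced by ${\rm len}(\gamma)\delta$ after the normalization fixed in the statement; this is a short check over the admissible (dual untwisted) types, and ${\rm len}(\gamma)$ is exactly the conversion $D=\deg/{\rm len}$ between the $\delta$-degree and the coefficient $D$ appearing in $(\beta_j^i)^\vee=\dots+Dd$.

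For part $b)$ I would use the standard identification of reduced decompositions of $w$ with convex (reflection) orders on its inversion multiset, equivalently with minimal galleries from the fundamental alcove $A_0$ to $wA_0$ (Section \ref{alcovepaths}, \cite{OS,RY}). It therefore suffices to show that the lexicographic order on the tuples $({\rm len}({\rm Re}\,\beta_j^i)\tfrac{a_{i_1}}{D},\tfrac{a_{i_2}}{a_{i_1}},\dots,\tfrac{a_{i_n}}{a_{i_1}})$ is a convex total order on the multiset of part $a)$. Totality is immediate: two roots with the same real part are separated by the first coordinate, since their $D$ differ, while two roots with different real parts already differ in the tail, because the ratios $a_{i_k}/a_{i_1}$ together with the implicit $a_{i_1}/a_{i_1}=1$ determine $\gamma^\vee$ up to a positive scalar and hence $\gamma$ itself. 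Geometrically the first coordinate should be thought of as a monotone function of the time at which a straight path towards $-\om_i$, measured in the appropriate length-normalized metric, meets the wall $H_{\beta_j^i}$; the factor ${\rm len}$ is exactly the correction making the meeting times of walls of different lengths comparable, and the remaining coordinates record a generic perturbation of the path in the directions $\om_{i_2},\dots,\om_{i_n}$, taken in that order, which resolves the crossings occurring simultaneously along lower-dimensional faces.

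Finally, convexity means that whenever $\beta,\beta'$ lie in the inversion multiset and $\beta+\beta'$ is again a root of the multiset, the tuple of $\beta+\beta'$ is lexicographically strictly between those of $\beta$ and $\beta'$. Since an additive relation $\beta+\beta'=\beta''$ among affine roots can only occur inside the rank-two affine subsystem spanned by ${\rm Re}\,\beta$ and ${\rm Re}\,\beta'$, this reduces to a finite verification over the rank-two subsystems that occur ($A_1\times A_1$, $A_2$, $B_2$ and $G_2$), and I expect this to be the main obstacle. The difficulty is that coroots are not additive across roots of different length, so a sum of two roots may change length; hence both the first coordinate, which must be treated through the relation $D=\deg/{\rm len}$, and the tie-breaking tail have to be checked to respect betweenness, with the cases ${\rm len}=2$ and, for $G_2$, ${\rm len}=3$ needing separate attention. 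The straight-line picture guarantees betweenness of the meeting parameters in each fixed length, so the genuine work is to confirm that the length-weighting and the tail continue to respect betweenness precisely in these degenerate rank-two configurations. Once convexity is established, the existence of the required reduced decomposition follows and the resulting order is the one stated.
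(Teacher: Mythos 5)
Your proposal is correct and follows essentially the same route as the paper, whose proof simply defers to the untwisted analogue (Proposition 1.15 of \cite{FM3}): part $a)$ by computing the inversion set of the translation $t_{-\om_i}$ in the affine root system of $\fg_0^\vee$, and part $b)$ via the standard correspondence between reduced decompositions and convex orders on the inversion set, with the length-normalization accounting for the twisted spacing ${\rm len}(\gamma)\delta$. The only piece you leave unexecuted is the finite rank-two convexity verification, which you have correctly identified and scoped and which is exactly the content of the argument imported from \cite{FM3}.
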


\begin{proof}
The proof is completely analogous to the proof of Proposition 1.15 in \cite{FM3}.
\end{proof}

\begin{cor}\label{order}
$i)$ $\beta_1^i=-\alpha_i+{\rm len}(\al_i)\delta$,\\
$ii)$ if $\gamma=\tau + \eta$, $\tau, \eta \in \Delta_+^{\vee}$, $({\rm Re}\beta_j^i)^{\vee}=-\gamma$, then
\begin{multline*}
|\lbrace k|({\rm Re} \beta_k^i)^{\vee} =-\gamma, k \leq j\rbrace|=\\
|\lbrace k|({\rm Re} \beta_k^i)^{\vee} =-\tau, k \leq j\rbrace|+
|\lbrace k|({\rm Re} \beta_k^i)^{\vee} =-\eta, k \leq j\rbrace|.
\end{multline*}
$iii)$ Let $\tau, \eta\in \Delta_+^\vee$ be roots such that $\tau^\vee+2\eta^\vee\in \Delta_+^\vee$. Consider a
subsequence $\beta^i_{j_k}, k=1,\dots,p$ consisting of all roots with the property
$-({\rm Re}\beta^i_{j_k})^\vee\in \lbrace \tau,\eta, \tau+\eta, \tau+2\eta \rbrace$ ($j_k<j_{k+1}$). Then the subsequence
$-({\rm Re}\beta^i_{j_k})^\vee, k=1,\dots,p$ is a concatenation of copies of two following sequences:
\begin{equation}\label{sequencecC_2}
\tau, \tau+2\eta, \tau+\eta,\tau+2\eta ~ {\rm and}~
\eta,\tau+\eta,\tau+2\eta.
\end{equation}
\end{cor}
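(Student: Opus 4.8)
The plan is to read off all three assertions from the explicit reduced decomposition and lexicographic order furnished by Proposition \ref{descriptionbeta}. The crucial structural point is that for a root $\beta_j^i$ with real part $\gamma\in\Delta_-$ the ordering vector of Proposition \ref{descriptionbeta}b) splits into two pieces: the tail coordinates $\frac{a_{i_2}}{a_{i_1}},\dots,\frac{a_{i_n}}{a_{i_1}}$ depend only on $\gamma$ (they record the ratios of the coefficients of $\gamma^\vee$ in the simple coroots), whereas the leading coordinate ${\rm len}(\gamma)\frac{a_{i_1}}{D}$ is a strictly decreasing function of the degree of $\beta_j^i$. By Proposition \ref{descriptionbeta}a) the degrees available for a fixed real part $\gamma$ are exactly $m\,{\rm len}(\gamma)$ with $1\le m\le a_{i_1}$, where $a_{i_1}=-\langle\gamma^\vee,\omega_i\rangle\ge 1$. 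Hence inside one real-part family the roots are listed by descending degree, and different families are interleaved by the lexicographic comparison of their ordering vectors; this is the only input I use.

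For $i)$ I minimize the ordering vector. Pinning down the twisted coroot normalization, one computes that the leading coordinate of the degree-$m\,{\rm len}(\gamma)$ root with real part $\gamma$ equals $a_{i_1}/m$, which is $\ge 1$ and equals $1$ precisely when $m=a_{i_1}$ is maximal. Thus the minimal possible leading coordinate is $1$, attained exactly by the top-degree root of each family; breaking the tie through the tail coordinates forces $\frac{a_{i_2}}{a_{i_1}}=\dots=\frac{a_{i_n}}{a_{i_1}}=0$, i.e. $\gamma^\vee$ is a multiple of $\alpha_i^\vee$. The only such negative root is $\gamma=-\alpha_i$, for which $a_{i_1}=1$ and the degree is ${\rm len}(\alpha_i)$, giving $\beta_1^i=-\alpha_i+{\rm len}(\alpha_i)\delta$.

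For $ii)$ and $iii)$ I restrict the order to the roots whose real coroot lies in the relevant rank-two configuration: $\{-\tau,-\eta,-(\tau+\eta)\}$ for $ii)$ and $\{-\tau,-\eta,-(\tau+\eta),-(\tau+2\eta)\}$ for $iii)$. Since the relative order of these roots is just the restriction of the lexicographic order, it can be computed directly from Proposition \ref{descriptionbeta}b) together with the degree multiplicities of Proposition \ref{descriptionbeta}a), reducing both statements to an explicit rank-two (type $A_2$, respectively $B_2/C_2$) computation. For $ii)$ the identity $a_{i_1}^{\tau+\eta}=a_{i_1}^{\tau}+a_{i_1}^{\eta}$ together with the interleaving of the three degree-towers yields that passing each $-(\tau+\eta)$-root increments exactly one of the partial counts for $-\tau$ and $-\eta$, which is the claimed additivity. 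For $iii)$ I write out the order produced by the lexicographic rule on the four coroot families with their degree ranges from Proposition \ref{descriptionbeta}a) and read off that every maximal run of the subsequence decomposes into the two blocks of \eqref{sequencecC_2}.

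The main obstacle will be fixing the twisted coroot normalization, i.e. the precise relation between the degree $m\,{\rm len}(\gamma)$ of $\beta_j^i$ and the coefficient $D$ of $d$ in $(\beta_j^i)^\vee$, because the length factors enter short and long roots asymmetrically and this is exactly what controls the leading coordinate. This is most delicate in $iii)$, where $\tau,\eta,\tau+\eta,\tau+2\eta$ carry two different lengths, so that the interleaving of their degree-towers, and hence the exact shape of the runs in \eqref{sequencecC_2}, hinges on getting these normalizations right. As with Proposition \ref{descriptionbeta} the computation parallels the untwisted treatment in \cite{FM3}, the only new feature being the bookkeeping of ${\rm len}$.
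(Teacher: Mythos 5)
Your proposal is correct and follows exactly the route the paper intends: the paper gives no separate proof of this corollary, treating all three parts as a direct readout of the explicit degree data in Proposition \ref{descriptionbeta}a) and the lexicographic order of Proposition \ref{descriptionbeta}b), which is precisely what you do (and your normalization $D=\deg\beta_j^i$, making the leading coordinate $a_{i_1}/m$, is the one consistent with the explicit sequences $\bar\beta^1,\bar\beta^2$ listed later for $D_3^{(2)}$ and $D_4^{(3)}$). The only caveat is in part $iii)$: when you carry out the rank-two computation you will find the two blocks realized in the examples are $\tau,\tau+\eta,\tau+2\eta$ and $\eta,\tau+2\eta,\tau+\eta,\tau+2\eta$ (e.g. $\bar\beta^1$ and $\bar\beta^2$ in type $D_3^{(2)}$), i.e. the statement's pairing of initial terms with block shapes appears to have $\tau$ and $\eta$ interchanged, so do not try to force your computation to reproduce \eqref{sequencecC_2} literally.
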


Recall $r_i=t_{-\om_i}$. %Let $w=t_{-\omega_i}$ and let $r$ be the length of $t_{-\om_i}$. 
We denote by $t\mathcal{QB}(u,\la,\bar\beta^{i,\la})$ all alcove paths of type
$\bar\beta^{i,\lambda}=(\beta^i_1+{\rm len}({\rm Re}\beta_1^i)\langle {\beta^i_1}^\vee, \lambda \rangle\delta,
 \dots \beta^i_{r_i}+{\rm len}({\rm Re}\beta_r^i)\langle {\beta^i_r}^\vee, \lambda \rangle\delta)$
starting at $ut_{\la-\omega_i}$. In other words, instead of getting the sequence of roots $\beta_j$ from a reduced decomposition of $t_{-\om_i}$,
we start with the fixed sequence $\bar\beta^{i,\lambda}$.
\begin{thm}\label{combinatorialdecomposition}
Let $\lambda\in -X_+$. Then for $u\in W^a$ the following holds:
\[C_u^{t_{\lambda-\omega_i}}=\sum_{p \in t\mathcal{QB}(u,\lambda,\bar\beta^{i,\lambda})}q^{{\rm deg}({\rm qwt}(p))}
C^{t_{\lambda}}_{end(p)t_{-\lambda}}.\]
Further, if $u \in W$, then
\[C_u^{t_{\lambda-\omega_i}}=\sum_{p \in t\mathcal{QB}(u,\lambda,\bar\beta^{i,\lambda})}q^{{\rm deg}({\rm qwt}(p))}
C^{t_{\lambda}}_{dir(end(p))}x^{wt(end(p))-\lambda}.
\]
\end{thm}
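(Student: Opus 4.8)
The plan is to prove both identities simultaneously by factoring a reduced word for $t_{\lambda-\omega_i}$ into two blocks and organizing the alcove paths accordingly. By Lemma \ref{firstbetas} the sequence $\beta_j(t_{\lambda-\omega_i})$ is the concatenation of the shifted block $\bar\beta^{i,\lambda}$ (the roots $\beta^i_j$ carrying the $\delta$-degrees twisted by $\lambda$) followed by the block $\beta_1,\dots,\beta_a$ attached to $t_\lambda$. Consequently every subset $J$ of $\{1,\dots,l(t_{\lambda-\omega_i})\}$ splits as $J=J_1\sqcup J_2$, with $J_1$ indexing the first block and $J_2$ the second, and the alcove walk started at $z_0=ut_{\lambda-\omega_i}$ breaks at the junction into a partial walk $p$ over $\bar\beta^{i,\lambda}$, ending at an element $\mathrm{end}(p)$, followed by a walk over the $t_\lambda$-block started at $\mathrm{end}(p)$. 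First I would use this to set up a bijection between $t\mathcal{QB}(u,t_{\lambda-\omega_i})$ and pairs $(p,p')$, where $p\in t\mathcal{QB}(u,\lambda,\bar\beta^{i,\lambda})$ and $p'$ is a walk over the $t_\lambda$-block based at $\mathrm{end}(p)$; here Lemma \ref{firstbetas} is what identifies the literal first block of the $t_{\lambda-\omega_i}$-word with the prescribed sequence $\bar\beta^{i,\lambda}$.

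Next I would verify that this factorization respects all the data entering Definition \ref{Cdef}. Being a twisted quantum alcove path is a condition imposed edge by edge on the projection $\mathrm{dir}(p_J)$ to the tQBG (and the extra parity rule in types $A^{(2)}_{2n}$ is also local, depending only on the imaginary part of each $\beta_{j}$), so a walk over the concatenated word is a tQBG path exactly when each sub-walk is one; the two sub-walks share only the junction vertex and no further compatibility is needed there. Moreover $\mathrm{qwt}(p_J)=\sum_{j\in J^-}\beta_j$ is additive along $J=J_1\sqcup J_2$, and since $j_m\in J^-$ is governed by the sign of $\mathrm{Re}(z_m\beta_{j_m})=\mathrm{dir}(z_m)(\mathrm{Re}\,\beta_{j_m})$ — which depends only on the direction and the real part, not on the $\delta$-degree — the quantum steps of $p'$ agree with those of the associated $t_\lambda$-path. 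Hence the $q$-weight factors as $q^{\deg\mathrm{qwt}(p_J)}=q^{\deg\mathrm{qwt}(p)}\,q^{\deg\mathrm{qwt}(p')}$.

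I would then repackage the second factor as a value of $C$ for $t_\lambda$. A walk over the $t_\lambda$-block based at the alcove $\mathrm{end}(p)$ carries the same datum as a walk for the word of $t_\lambda$ based at $\mathrm{end}(p)t_{-\lambda}$, so that $z_0=\mathrm{end}(p)t_{-\lambda}\cdot t_\lambda=\mathrm{end}(p)$. Because $\mathrm{dir}$ is invariant under right multiplication by $t_{-\lambda}$, the direction sequence, the tQBG condition, the quantum steps, and the terminal weight $wt(\mathrm{end}(p_J))$ are all unchanged by this reinterpretation. Summing over $p'$ with $p$ fixed therefore produces exactly $C^{t_\lambda}_{\mathrm{end}(p)t_{-\lambda}}$, and summing over $p$ yields the first identity $C_u^{t_{\lambda-\omega_i}}=\sum_{p}q^{\deg\mathrm{qwt}(p)}C^{t_\lambda}_{\mathrm{end}(p)t_{-\lambda}}$.

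Finally, for $u\in W$ I would deduce the second identity from the first using the translation property $C^{t_\lambda}_{t_\mu v}=x^\mu C^{t_\lambda}_v$. Writing $\mathrm{end}(p)=t_{wt(\mathrm{end}(p))}\mathrm{dir}(\mathrm{end}(p))$ and invoking $\mathrm{dir}(\mathrm{end}(p)t_{-\lambda})=\mathrm{dir}(\mathrm{end}(p))$, one pulls the translation to the left, factors $\mathrm{end}(p)t_{-\lambda}=t_\mu\,\mathrm{dir}(\mathrm{end}(p))$, and obtains $C^{t_\lambda}_{\mathrm{end}(p)t_{-\lambda}}=x^\mu C^{t_\lambda}_{\mathrm{dir}(\mathrm{end}(p))}$. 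The main obstacle is exactly this last bookkeeping: one must confirm that the weight $\mu$ delivered by the translation property simplifies to $wt(\mathrm{end}(p))-\lambda$ as claimed, rather than to the naive $wt(\mathrm{end}(p))-\mathrm{dir}(\mathrm{end}(p))\lambda$ that a careless reduction to the left-translation part would suggest. This hinges on using the Orr–Shimozono normalization of the endpoint weight correctly, and on the reconciliation via Lemma \ref{firstbetas} of the $\delta$-degrees of the literal first block with those of $\bar\beta^{i,\lambda}$, since the affine reflections, and hence the alcoves actually visited, are sensitive to those degrees. I expect everything else to be a routine matching of paths, with this weight identification being the only genuinely delicate step.
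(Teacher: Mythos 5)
The paper states Theorem \ref{combinatorialdecomposition} with no proof at all (the argument is implicitly deferred to its untwisted analogue in \cite{FM3}), so there is no in-paper proof to compare against; your outline reproduces exactly the intended argument: concatenate reduced words via Lemma \ref{firstbetas}, split each subset $J$ into the two blocks, observe that the tQBG condition and the $A^{(2)}_{2n}$ parity restriction are imposed step by step and therefore factor over the junction, that ${\rm qwt}$ is additive, and that since $s_{\beta^i_j+\langle(\beta^i_j)^\vee,\la\rangle\delta}=t_{-\la}s_{\beta^i_j}t_\la$ one gets ${\rm end}(p)=ut_{-\om_i}\bigl(\prod_{j\in J_1}s_{\beta^i_j}\bigr)t_\la$, so the tail walk based at ${\rm end}(p)$ is literally a $t_\la$-walk based at ${\rm end}(p)t_{-\la}$. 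This establishes the first identity completely.

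For the second identity you explicitly stop at the weight bookkeeping, and that is the only step with real content, so your proposal is incomplete exactly there. Carrying it out: with $y={\rm end}(p)t_{-\la}$ one has ${\rm dir}(y)={\rm dir}({\rm end}(p))$ and, conjugating $t_{-\la}$ past ${\rm dir}({\rm end}(p))$, ${\rm wt}(y)={\rm wt}({\rm end}(p))-{\rm dir}({\rm end}(p))\la$; the translation property then produces the exponent $x^{{\rm wt}({\rm end}(p)t_{-\la})}={\rm wt}({\rm end}(p))-{\rm dir}({\rm end}(p))\la$. This coincides with the printed ${\rm wt}({\rm end}(p))-\la$ only when ${\rm dir}({\rm end}(p))$ fixes $\la$, which already fails for the empty path $J_1=\emptyset$ with $u\neq{\rm id}$: there ${\rm end}(p)=ut_{\la-\om_i}$, the printed exponent is $u(\la-\om_i)-\la$, while consistency with the first identity forces $-u\om_i$. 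So the ``naive'' reduction you were worried about is in fact the correct one; the exponent in the second display should be read as ${\rm wt}({\rm end}(p)t_{-\la})$, and your proof should be finished by proving that corrected form rather than by leaving the reconciliation open. Everything else in your write-up is sound.
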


\section{Generalized Weyl Modules}\label{GenWeylMod}
\subsection{Definitions and basic properties}
Let $\mathfrak{g}$ be a finite-dimensional simple Lie algebra and let $\tau$ be an automorphism of its Dynkin diagram  of order $j$.
We fix a primitive root of unity $\varepsilon$ of degree $j$ . Let
$\mathfrak{g}=\bigoplus_{k=0}^{j-1}\mathfrak{g}_k$, $\tau|_{\mathfrak{g}_k}=\varepsilon^k$ and let
 $\mathfrak{g}_{k+j} \simeq \mathfrak{g}_k$ for any $k \in \mathbb{Z}$.
We consider the twisted current algebra
\[\mathfrak{g}[t]^{(j)}=\bigoplus_{k=0}^\infty\mathfrak{g}_k\otimes t^k.\]
By definition we have the following inclusion:
\begin{equation}\label{inclusiontonontwisted}
\mathfrak{g}[t]^{(j)}\subset \mathfrak{g}[t].
\end{equation}

The space $\mathfrak{g}_0$ is a simple Lie algebra.
Let $\mathfrak{g}_0=\mathfrak{n}_-\oplus \mathfrak{h} \oplus \mathfrak{n}_+$ be the Cartan decomposition of $\fg_0$.
Let $\Delta$ be the root system of $\mathfrak{g}_0$. Denote by $\Delta^s$ the set of
short roots and by $\Delta^l$ the set of long roots in $\Delta$.
For a positive root $\al \in \Delta_+$ let $e_\al\in\fn_+$ and $f_{-\al}\in\fn_-$ be the Chevalley generators.
The weight lattice $X$ contains the positive part $X_+$; in particular, the fundamental weights
$\omega_1,\dots,\omega_n$ belong to $X_+$. For $\la\in X_+$ we denote by $V_\la$ the irreducible
highest weight $\fg_0$-module with highest weight $\la$.
The subalgebra $$\fn^{af}=\fn_+\oplus \bigoplus_{k=1}^\infty\mathfrak{g}_k\otimes t^k\subset\fg[t]$$
will play a crucial role in the constructions below.
%Note that this is the Borel subalgebra of the twisted affine Lie algebra $\widehat{\mathfrak{g}}^{(j)}$.

Here is a list of diagram automorphisms: $\mathfrak{g}=A_{2n-1}$ and $j=2$, $\mathfrak{g}=D_{n+1}$ and $j=2$, 
$\mathfrak{g}=E_{6}$ and $j=2$, $\mathfrak{g}=D_{4}$ and $j=3$
and $\mathfrak{g}=A_{2n}, j=2$ (see Figure \ref{InclusionInvariants}).
In all the cases except for the last one the $\mathfrak{g}_0$ module $\mathfrak{g}_1$ has the following property:
${\rm wt}_{\mathfrak{h}^0}(\mathfrak{g}_1)=\Delta^s \cup \lbrace 0 \rbrace$, i. e. the set of nonzero weights of $\fg_1$ is equal to the
set of short roots of $\mathfrak{g}_0$. Let
$\delta$ be the basic imaginary root. We note that $\mathfrak{g}_k$ contains the nontrivial weight $k\delta$ subspace $\fh_{k\delta}$.
For each $\alpha\in\Delta$ the weight $\al+k\delta$, $k \geq 0$ subspace of $\fg^{(j)}[t]$ is at most one dimensional
(this subspace is trivial for $\al$ long and $k \notin j \mathbb{Z}$).
We fix a generator of this vector space (if nontrivial) and denote it by $e_{\alpha+k\delta}$.
So the set of real roots of the affine twisted Lie algebra (except for $A^{(2)}_{2n}$) is equal to
$(\Delta^l+ j\mathbb{Z}_{\geq 0}\delta)\cup (\Delta^s+ \mathbb{Z}_{\geq 0}\delta)$.
In the case $\mathfrak{g}=A_{2n}, j=2$ ($A_{2n}^{(2)}$) the $\mathfrak{g}_0$ module $\mathfrak{g}_1$ is
the highest weight module $V_{2\omega_1}$. The set of its nonzero weights is equal to
$\lbrace \alpha|\alpha \in \Delta \rbrace \cup \lbrace 2\alpha|\alpha \in \Delta^s \rbrace$, i.~e. all the roots
of $\fg_0$ and doubled short roots of $\fg_0$.

The classical Weyl modules $W(\la)$, $\la\in X_+$ for twisted Kac-Moody Lie algebras were defined in \cite{CFS}:
$W(\la)$ is a cyclic $\fg[t]^{(j)}$ module with generator $v$ subject to the following defining
relations:
\begin{gather}
\label{classicalweyl1}
h v=0, h\in \fh_{k\delta}, k \geq 1;\ h v=\la(h)v\text{ for all } h\in\fh;\\
\label{classicalweyl2}
e_{\alpha+k\delta} v=0, k \geq 0;\ f_{-\alpha}^{\langle \alpha^\vee, \lambda \rangle+1}v=0,
\text{ for all } \al\in\Delta_+.
\end{gather}

Let $\mathfrak{g}$ be of dual untwisted type.
Let $W$ be the Weyl group of $\mathfrak{g}_0$.
In what follows we use the following notation. For a root $\alpha$ let ${\rm len}(\alpha)$ be its normalized length, i. e. $1$ for short and
$2$ or $3$ for long roots.
For an element $\sigma \in W$ and $\alpha \in \Delta_+$ we
define the action  $\widehat\sigma$ of $\sigma$ on the set $\Delta_+\cup(\Delta_-^s+\delta)\cup(\Delta_-^l+j\delta)$ as follows:
\begin{gather*}
\widehat \sigma(\alpha)=   \begin{cases}
                             {\sigma(\alpha)}, & \text{ if } \sigma(\alpha) \in \Delta_+, \\
                             {\sigma(\alpha)}+ {\rm len}(\alpha)\delta, &\text{ if } \sigma(\alpha) \in \Delta_-,
                            \end{cases}\\
\widehat \sigma(-\alpha+{\rm len}(\alpha)\delta)=   \begin{cases}
                             {\sigma(-\alpha+{\rm len}(\alpha)\delta)}, & \text{ if } \sigma(-\alpha) \in \Delta_-, \\
                             {\sigma(-\alpha)}, & \text{ if } \sigma(-\alpha) \in \Delta_+.
                            \end{cases}.							
\end{gather*}
We also define the corresponding action on the root vectors $e_\beta$, $\beta\in \Delta_+\cup(\Delta_-^s+\delta)\cup(\Delta_-^l+j\delta)$:
$\widehat{\sigma} e_\beta=e_{\widehat{\sigma}(\beta)}$.

Using this action we define the generalized Weyl modules for dual untwisted types analogously to the untwisted definition from \cite{FM3}.

\begin{definition}\label{weylmodules}
For $\mu\in X$ write $\mu=\sigma(\lambda)$, $\sigma\in W$, $\lambda\in X_-$. Then the (generalized) Weyl module
$W_{\mu}$
is the cyclic $\fn^{af}$ module with a generator $v$ and the following relations:
\begin{gather}
\label{weylvanishing0}
h v=0, h\in \fh_{k\delta}, k \geq 1;\\
\label{weylvanishing1}
(\widehat{\sigma}(e_{-\alpha+{\rm len}(\alpha) \delta}) v=0;\\
\label{weylbound2}
(\widehat{\sigma}(e_{\alpha}))^{-\langle \alpha^\vee, \lambda \rangle+1} v=0
\end{gather}
for any $\alpha \in \Delta_+$.
%This module will be called the Weyl module of extremal weight $w(\Lambda)$.
\end{definition}

\subsection{Mixed type}
In this subsection we consider types $A^{(2)}_{2n}$. In this case $\mathfrak{g}_0$ is of type $B_n$. Then real roots
written in terms of orthonormal vectors $\epsilon_1,\dots,\epsilon_n$
are $\pm\epsilon_i+k\delta, \pm\epsilon_i\pm\epsilon_j+k\delta, \pm 2\epsilon_i+(2k+1)\delta$, $i,j \in 1, \dots, n$, $k \in \mathbb{Z}$.
Let $\Delta$ be the root system of type $B_n$.
We define the action of the Weyl group on the set $\Delta_+\cup(\Delta^l_-+\delta)\cup (2\Delta^s_-+\delta)$, i.e. on the following set:
\[\lbrace\epsilon_i\rbrace\cup\lbrace\epsilon_i\pm\epsilon_j|i>j\rbrace\cup\lbrace-\epsilon_i\mp\epsilon_j+\delta|i>j\rbrace
\cup\lbrace-2\epsilon_i+\delta\rbrace.\]

The action is defined by the formulas
\begin{gather*}
\widehat{\sigma}(\epsilon_i\pm\epsilon_j)=\begin{cases}
                             \sigma(\epsilon_i\pm\epsilon_j)+\delta, & \text{ if } \sigma(\epsilon_i\pm\epsilon_j) \in \Delta_- \\
                             \sigma(\epsilon_i\pm\epsilon_j), & \text{ if } \sigma(\epsilon_i\pm\epsilon_j) \in \Delta_+ \\
                            \end{cases},\\
\widehat{\sigma}(-\epsilon_i\mp\epsilon_j+\delta)=\begin{cases}
                             \sigma(-\epsilon_i\mp\epsilon_j)+\delta, & \text{ if } \sigma(-\epsilon_i\mp\epsilon_j) \in \Delta_- \\
                             \sigma(-\epsilon_i\mp\epsilon_j), & \text{ if } \sigma(-\epsilon_i\mp\epsilon_j) \in \Delta_+ \\
                            \end{cases}\\
\widehat{\sigma}(\epsilon_i)=\begin{cases}
                             2\sigma(\epsilon_i)+\delta, & \text{ if } \sigma(\epsilon_i) \in \Delta_- \\
                             \sigma(\epsilon_i), & \text{ if } \sigma(\epsilon_i) \in \Delta_+ \\
                            \end{cases},\\
\widehat{\sigma}(-2\epsilon_i+\delta)=\begin{cases}
                             -2\sigma(\epsilon_i)+\delta, & \text{ if } \sigma(-\epsilon_i) \in \Delta_- \\
                             \sigma(-\epsilon_i), & \text{ if } \sigma(-\epsilon_i) \in \Delta_+ \\
                            \end{cases},\\
\end{gather*}

We define $\widehat{\sigma} e_\beta=e_{\widehat{\sigma} (\beta)}$ for  $\beta\in \Delta_+\cup(\Delta^l_-+\delta)\cup (2\Delta^s_-+\delta)$.

%Using this action we can define the following definition of generalized Weyl modules of mixed type.
\begin{definition}\label{weylmodulesmix}
For $\mu\in X$ write $\mu=\sigma(\lambda)$, $\sigma\in W$, $\lambda\in X_-$. Then the (generalized) Weyl module
$W_{\mu}$
is the cyclic $\fn^{af}$ module with a generator $v$ and the following relations:
\begin{gather}
\label{weylvanishingmix0}
h v=0, h\in \fh_{k\delta}, k \geq 1;\\
\label{weylvanishingmix1}
(\widehat{\sigma}(e_{-\alpha+ \delta})) v=0, \al\in\Delta_+^l\cup 2\Delta_+^s;\\
\label{weylboundmix2}
(\widehat{\sigma}(e_{\alpha}))^{-\langle \alpha^\vee, \lambda \rangle+1} v=0, \ \alpha\in\Delta_l^+,\\
\label{weylboundmix3}
(\widehat{\sigma}(e_{\alpha}))^{-{\langle \alpha^\vee, \lambda \rangle}+1} v=0, \ \alpha\in\Delta_s^+ \text{ and } \sigma(\alpha) \in \Delta_+.\\
\label{weylboundmix4}
(\widehat{\sigma}(e_{\alpha}))^{\left[-\frac{\langle \alpha^\vee, \lambda \rangle}{2}\right]+1} v=0, \alpha\in\Delta_s^+ \text{ and } \sigma(\alpha) \in \Delta_-.
\end{gather}
\end{definition}

\begin{rem}
The restriction of $W(\lambda)$ to $\fn_+^{af}$ is isomorphic to $W_{-\lambda}$,
$\la\in X_+$ (since $w_0\la=-\la$ for non simply-laced algebras).
\end{rem}

\subsection{Dual mixed type}
Let us consider the Dynkin diagram of affine type $A_{2n}^{(2)}$:
$$
\xymatrix{
\overset{\alpha_0}{\circ}  \ar@{=>}[r]  & \overset{\alpha_1}{\circ}  \ar@{-}[r] & \overset{\alpha_2}{\circ}
\ar@{-}[r]& \overset{\alpha_3}{\circ}  \ar@{.}[r] & \overset{\alpha_{n-3}}{\circ}  \ar@{-}[r]&
\overset{\alpha_{n-2}}{\circ}  \ar@{-}[r]& \overset{\alpha_{n-1}}{\circ}  \ar@{=>}[r] &\overset{\alpha_n}{\circ}
}
$$
We have two natural possibilities to choose a special vertex. If we call the leftmost vertex special, then
we obtain the usual $A_{2n}^{(2)}$ picture with $\mathfrak{g}_0=B_n$.
Denote by $\Delta$ the root system of this zero level algebra.
If we call the rightmost vertex special, then the
underlying simple Lie algebra is of type $C_n$ and will be denoted by $\mathfrak{g}_0^\dagger$. 
The corresponding twisted current algebra was considered in \cite{CIK} (the algebra is referred to as
the hyperspecial parabolic subalgebra).  In the second case we have 
$\mathfrak{g}_1^\dagger\simeq V_{\omega_1}$, $\mathfrak{g}_2^\dagger\simeq V_{\omega_2}$,
$\mathfrak{g}_3^\dagger\simeq V_{\omega_1}$, $\mathfrak{g}_{i+4}^\dagger\simeq \mathfrak{g}_i^\dagger$ as $\mathfrak{g}_0^\dagger$-modules.
%Put $\mathfrak{g}_0=\mathfrak{n}_-\oplus \mathfrak{h} \oplus \mathfrak{n}_+$ the Cartan decomposition,
Let $\Delta^{\dag}$ be the root system of $\fg_0^\dagger$. Then the weights of $V_{\omega_1}$ can
be identified with halves of the long roots of $\mathfrak{g}_0^\dagger$ and  the weights of $V_{\omega_2}$ can
be identified with short roots of $\mathfrak{g}_0^\dagger$. Therefore the roots of affine algebra can be written in
the following way:
\[\lbrace \Delta^{\dag} + 4 \mathbb{Z} \delta\rbrace \cup \lbrace \Delta^{\dag s} + (2+4 \mathbb{Z}) \delta\rbrace \cup
\lbrace \frac{1}{2}\Delta^{\dag l} + (1+2 \mathbb{Z}) \delta\rbrace.\]

For a dominant $\lambda$ we denote by $W^{\dag}({\lambda})$  the hyperspecial Weyl modules from \cite{CIK}. So $W^{\dag}({\lambda})$
is the cyclic module with the following relations:
\begin{gather*}
\fh_{k\delta} v=0, k \geq 1;\ e_{\alpha+4k \delta} v=0;~ e_{1/2 \alpha+(1+2k) \delta}v=0, \alpha \in \Delta_+^l, k\ge 0;\\
e_{\alpha+2k \delta} v=0,\ \alpha \in \Delta_+^s, k\ge 0;\ f_{-\alpha}^{\langle \alpha, \lambda \rangle+1}v=0, \alpha \in \Delta_-.
\end{gather*}
One can define the generalized Weyl modules in this case as well. However it does not make sense because
the generalized Weyl modules of dual mixed type are isomorphic to the generalized Weyl modules of mixed type.
To prove this claim, we consider the element $w^\dag$ of the Weyl group of type $B_n$ sending
$\alpha_i$ to $\al_{n-i}$, $1\le i\le n-1$ and $w^\dag\al_n=-\al_1-\dots -\al_n$ ($w^\dag$ sends $\epsilon_i$ to $-\epsilon_{n+1-i}$).

Let $[x]$ denotes the floor function of $x$. 
\begin{prop}\label{dagomega}
Let $\lambda=-\sum_{i=1}^n m_i \omega_i$, $\lambda^{\dag}=-\sum_{i=1}^{n-1} m_i \omega_{n-i}- \left[\frac{m_n}{2} \right]\omega_0$.
Then $W_{w^\dag(\lambda)}\simeq W^{\dag}(-\lambda^{\dag})$.
\end{prop}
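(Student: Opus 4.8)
The plan is to realize both $W_{w^\dagger(\lambda)}$ and $W^\dagger(-\lambda^\dagger)$ as cyclic modules over one and the same subalgebra $\fn^{af}$ and to show that their defining ideals in $U(\fn^{af})$ coincide. The starting observation is that the two choices of special vertex for $A_{2n}^{(2)}$ do not change the underlying affine Lie algebra, only its grading: the positive part $\fn^{af}$ (the span of the positive affine root vectors together with the positive imaginary part) is literally the same subalgebra in the $B_n$ and in the $C_n$ picture, while the two imaginary roots $\delta$ measure the $\alpha_0$-height and the $\alpha_n$-height respectively. Thus a single root vector of $\fn^{af}$ carries two labels, and the whole difficulty is to make the dictionary between the $B_n$-labels $\pm\epsilon_i+k\delta,\ \pm\epsilon_i\pm\epsilon_j+k\delta,\ \pm2\epsilon_i+(2k+1)\delta$ and the $C_n$-labels $\Delta^\dagger+4\mathbb{Z}\delta,\ \Delta^{\dagger s}+(2+4\mathbb{Z})\delta,\ \tfrac{1}{2}\Delta^{\dagger l}+(1+2\mathbb{Z})\delta$ explicit. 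The finite shadow of this relabeling is exactly the node reversal $\epsilon_i\mapsto-\epsilon_{n+1-i}$, i.e. the element $w^\dagger$; in particular the doubled short roots $\pm2\epsilon_i$ of $B_n$ sitting at odd imaginary height are identified with the halved long $C_n$ roots $\tfrac{1}{2}(\pm2\epsilon_i^\dagger)=\pm\epsilon_i^\dagger$, again at odd height.

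First I would reduce $W^\dagger(-\lambda^\dagger)$ to a presentation over $\fn^{af}$. Since $-\lambda^\dagger$ is dominant, $W^\dagger(-\lambda^\dagger)$ is the hyperspecial classical Weyl module of \cite{CIK}, a cyclic module over the full $C_n$-twisted current algebra. Arguing exactly as in the Remark that $W(\lambda)$ restricted to $\fn^{af}$ is the generalized Weyl module $W_{-\lambda}$, I would show that the restriction of $W^\dagger(-\lambda^\dagger)$ to $\fn^{af}$ is still cyclic on the same generator $v$, and convert its lowering relations $f_{-\alpha}^{\langle\alpha,-\lambda^\dagger\rangle+1}v=0$ into bounding relations of the form $(\text{positive root vector})^{N}v=0$ inside $U(\fn^{af})$. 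Rewriting the resulting relations in the $B_n$-labels via the dictionary of the first step then puts them in a form directly comparable with Definition~\ref{weylmodulesmix}.

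Next I would match the two presentations family by family. The relations $\fh_{k\delta}v=0$ agree on both sides. The vanishing relations \eqref{weylvanishingmix1} correspond to the relations $e_{\alpha+k\delta}v=0$ of $W^\dagger(-\lambda^\dagger)$ after relabeling. For the bounding relations one uses that $w^\dagger(\epsilon_i)=-\epsilon_{n+1-i}\in\Delta_-$ for every $i$, so that every short positive root of $B_n$ is governed by \eqref{weylboundmix4} with the halved exponent $[-\langle\alpha^\vee,\lambda\rangle/2]+1$, while the long roots are governed by \eqref{weylboundmix2}; comparing with the powers of $f_{-\alpha}$ dictated by $\langle\alpha,-\lambda^\dagger\rangle+1$ on the $C_n$ side, together with $\langle\alpha_i^\vee,\lambda\rangle=-m_i$, forces both the coefficients $m_i$ of $\omega_{n-i}$ and the coefficient $[m_n/2]$ of $\omega_0$ in $\lambda^\dagger$. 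Once all families match, the two defining ideals coincide and cyclicity yields the isomorphism.

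The step I expect to be the main obstacle is this last exponent-and-weight matching, and in particular the appearance of the floor function. One must check that under the $2\epsilon_i\leftrightarrow\tfrac{1}{2}(2\epsilon_i^\dagger)$ identification the short simple root $\alpha_n=\epsilon_n$ of $B_n$, carrying the datum $m_n$, controls the $C_n$ long coroot paired with $\omega_0$, so that the halved exponent produced by \eqref{weylboundmix4} agrees with the power of $f_{-\alpha}$ coming from the $[m_n/2]\omega_0$ term; this is precisely the point where the two imaginary-root normalizations (period $2$ in the $B_n$ picture versus period $4$ in the $C_n$ picture) must be reconciled, and it is the source of the floor in $\lambda^\dagger$.
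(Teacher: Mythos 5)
Your proposal follows essentially the same route as the paper's proof: both arguments amount to transporting the defining relations of Definition~\ref{weylmodulesmix} through the dictionary $\widehat{w^\dag}$, which sends $\alpha_i\mapsto\alpha_{n-i}$ and $\alpha_n\mapsto\alpha_0=-2(\alpha_1+\cdots+\alpha_n)+\delta$, and identifies the resulting relations with those of the hyperspecial module $W^\dag(-\lambda^\dag)$ restricted to $\fn^{af}$. The crux you single out --- that the short roots made negative by $w^\dag$ fall under \eqref{weylboundmix4} and produce the halved exponent $\left[\frac{m_n}{2}\right]+1$ on $e_{\alpha_0}$, which is the source of the floor in $\lambda^\dag$ --- is precisely the one computation the paper's proof records explicitly.
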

\begin{proof}
We have that $\alpha_0=-2(\alpha_1+\dots +\alpha_n)+\delta$ and $\alpha_0=\widehat{w^\dag}(\alpha_n)$.
Moreover $\Delta_+^{\dag}=\widehat{w^\dag}(\Delta_+)$ and
$\widehat{w^\dag}(\alpha_i)=\alpha_{n-i}$, $i=1, \dots, n-1$. Using \eqref{weylboundmix4} we have:
\[e_{\alpha_0}^{\left[ \frac{m_n}{2} \right]+1}v=0.\]
This completes the proof of the Proposition.
\end{proof}

\subsection{Properties of $W_{\sigma(\lambda)}$}
It is easy to see that the generalized Weyl modules are well defined, i.e. $W_\mu$ does
not depend on the choice of $\sigma$ and $\la$ such that  $\sigma(\lambda)=\mu$ (see Lemma 2.2 in \cite{FM3}).

We note that the algebra $\fn^{af}$ does not contain the finite Cartan subalgebra $\fh$. However, sometimes
it is convenient to add extra operators from $\fh$ acting on $W_\mu$.
\begin{definition}
For $\nu\in X$ we define $W_{\mu}^\nu$ to be the $\fn^{af}\oplus\fh$-module where the action of
$\fh$ is defined by $hv=\nu(h)v$ for all $h\in\fh$ and the cyclic vector $v$.
If $\nu=\mu$, we omit the upper index and write $W_{\mu}$ for $W_{\mu}^\mu$.
\end{definition}

The modules $W_\mu^\nu$ are naturally graded by the Cartan subalgebra $\fh$ and carry additional
degree grading defined by two conditions: ${\rm deg} (v)=0$ and the operators $e_{\gamma+k\delta}$ increase
the degree by $k$. We define the character by the formula:
\[
{\rm ch} W_\mu^\nu=\sum \dim W_\mu^\nu[\gamma,k] x^\gamma q^k ,
\]
where $W_\mu^\nu[\gamma,k]$ consists of degree $k$ vectors of $\fg_0$-weight $\gamma$.
In particular, we write ${\rm ch} W_\mu$ for the character of $W_\mu^\mu$.

\begin{figure}
\xymatrixrowsep{0.09in}
\xymatrixcolsep{0.2in}
\xymatrix{
\overset{e_1}{\circ} \ar@{-}[r]  & \overset{e_2}{\circ} \ar@{-}[r] & \overset{e_3}{\circ} \ar@{.}[r] &
\overset{e_{n-2}}{\circ} \ar@{-}[r] &\overset{e_{n-1}}{\circ} \ar@{-}[dr]& \\
&&&&&\overset{e_{n}}{\circ} \ar@{-}[dl]\\
\underset{e_{2n-1}}{\circ} \ar@{-}[r] & \underset{e_{2n-2}}{\circ} \ar@{-}[r] & \underset{e_{2n-3}}{\circ} \ar@{.}[r]
& \underset{e_{n+2}}{\circ} \ar@{-}[r] &\underset{e_{n+1}}{\circ}& \\
\underset{\frac{e_1+ e_{2n-1}}{2}}{\circ} \ar@{-}[r]  & \underset{\frac{e_2+ e_{2n-2}}{2}}{\circ} \ar@{-}[r]
 & \underset{\frac{e_3+ e_{2n-3}}{2}}{\circ} \ar@{.}[r] &\underset{\frac{e_{n-2}+ e_{n+2}}{2}}{\circ} \ar@{-}[r] &
\underset{\frac{e_{n-1}+ e_{n+1}}{2}}{\circ} \ar@{<=}[r] &\underset{{e_n}}{\circ} &C_n \subset A_{2n-1}\\
&&&&& \overset{e_n}{\circ} \\
\overset{e_1}{\circ} \ar@{-}[r]  & \overset{e_2}{\circ} \ar@{-}[r] & \overset{e_3}{\circ} \ar@{.}[r] &
\overset{e_{n-2}}{\circ} \ar@{-}[r] &\overset{e_{n-1}}{\circ} \ar@{-}[dr] \ar@{-}[ur]& \\
&&&&& \overset{e_{n+1}}{\circ} \\
\overset{e_1}{\circ} \ar@{-}[r]  & \overset{e_2}{\circ} \ar@{-}[r]
 & \overset{e_3}{\circ} \ar@{.}[r] &\overset{e_{n-2}}{\circ} \ar@{-}[r] &
\overset{e_{n-1}}{\circ} \ar@{=>}[r] &\overset{\frac{e_{n}+ e_{n+1}}{2}}{\circ} &B_n \subset D_{n+1}\\
\overset{e_1}{\circ} \ar@{-}[r]  & \overset{e_3}{\circ} \ar@{-}[rd]&&&\overset{e_1}{\circ} \ar@{-}[rd]\\
&&\overset{e_4}{\circ} \ar@{-}[r]  & \overset{e_2}{\circ}&\overset{e_3}{\circ} \ar@{-}[r]&\overset{e_2}{\circ}  \\
\overset{e_6}{\circ} \ar@{-}[r]  & \overset{e_5}{\circ} \ar@{-}[ru]&& F_4 \subset E_{6}&\overset{e_4}{\circ} \ar@{-}[ru] \\
\overset{\frac{e_1+ e_{6}}{2}}{\circ} \ar@{-}[r]  & \overset{\frac{e_3+ e_{5}}{2}}{\circ} \ar@{<=}[r]
 & \overset{e_4}{\circ} \ar@{-}[r] &\overset{e_2}{\circ} & \overset{\frac{e_1+ e_3+e_4}{3}}{\circ} \ar@3{<-}[r]&\overset{e_2}{\circ}
 & G_2 \subset D_{4}\\
\overset{e_1}{\circ} \ar@{-}[r]  & \overset{e_2}{\circ} \ar@{-}[r] & \overset{e_3}{\circ} \ar@{.}[r] &
\overset{e_{n-1}}{\circ} \ar@{-}[r] &\overset{e_n}{\circ} \ar@{-}[d]& \\
\underset{e_{2n}}{\circ} \ar@{-}[r] & \underset{e_{2n-1}}{\circ} \ar@{-}[r] & \underset{e_{2n-2}}{\circ} \ar@{.}[r]
& \underset{e_{n+2}}{\circ} \ar@{-}[r] &\underset{e_{n+1}}{\circ}& \\
\overset{\frac{e_1+ e_{2n}}{2}}{\circ} \ar@{-}[r]  & \overset{\frac{e_2+ e_{2n-1}}{2}}{\circ} \ar@{-}[r]
 & \overset{\frac{e_3+ e_{2n-2}}{2}}{\circ} \ar@{.}[r] &
\overset{\frac{e_{n-2}+ e_{n+1}}{2}}{\circ} \ar@{=>}[r] &\overset{\frac{e_{n-1}+ e_n}{2}}{\circ} &B_n \subset A_{2n}
}
\caption{Subalgebras $X_{n}^0 \subset X_{n}$}\label{InclusionInvariants}
\end{figure}
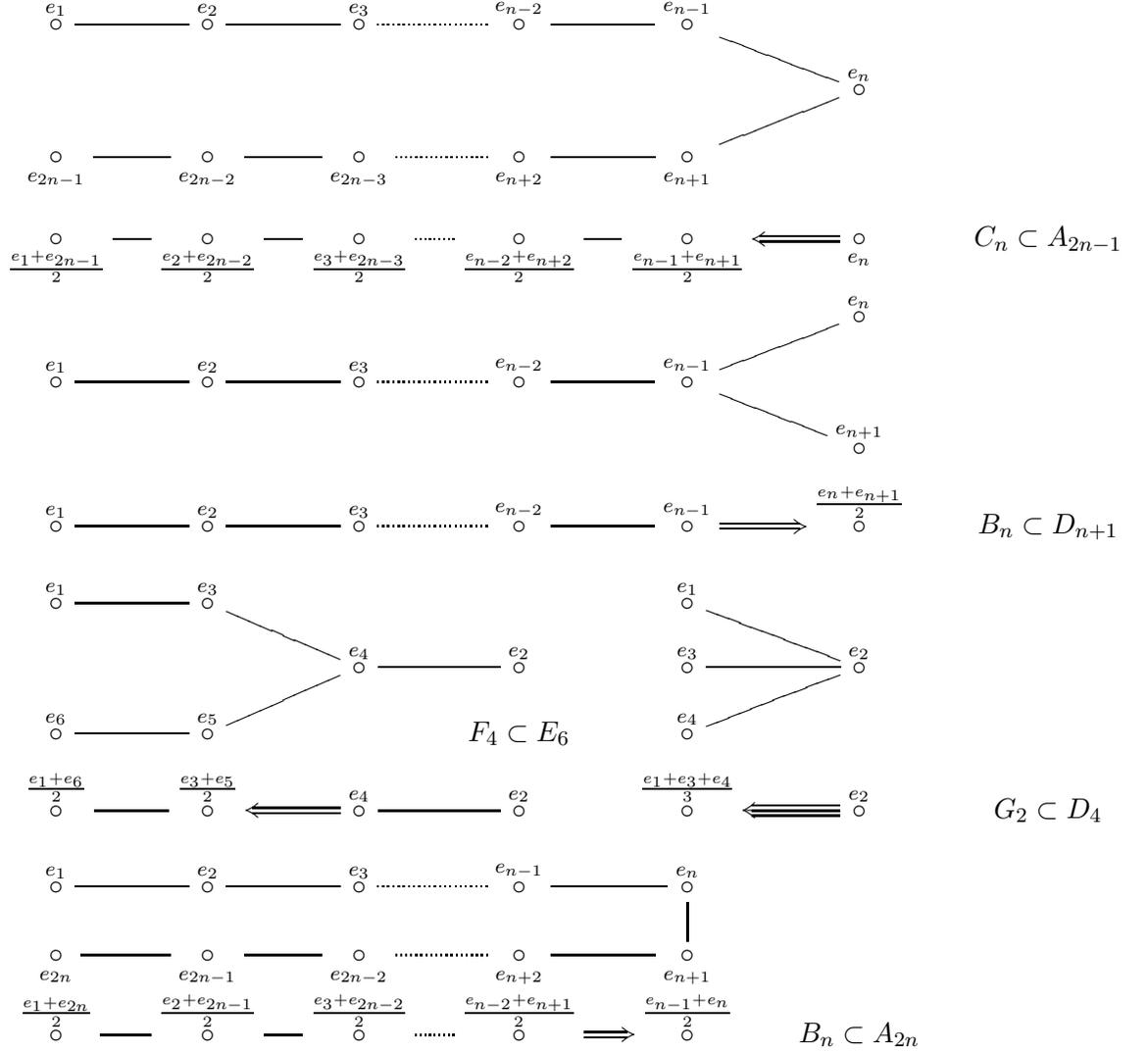

\subsection{Fusion-like construction}
Our first goal is to define the action of $\fn^{af}$ on the Weyl modules of the corresponding untwisted algebra
$\fg[t]$. Let $\lambda$ be an antidominant weight of $\fg$ and
let $W(\bar{w_0}\la)$ be the (classical) Weyl module over $\fg[t]$ with a lowest weight vector $v$ 
(where $\bar{w_0}$ is the longest element of the Weyl group of the algebra $\fg$).
Recall the inclusion \eqref{inclusiontonontwisted}.
Let $z$ be some nonzero complex number.  For $u \in W(\bar w_0\lambda)$, $e_{\alpha} \otimes t^k \in \fg[t]^{(j)}$
we consider the shifted action:
\[e_{\alpha} \otimes t^k. u= e_{\alpha} \otimes (t+z)^ku.\]

Let $\sigma$ be an element of the Weyl group of the algebra $\mathfrak{g}_0$. 
Since $\fg_0\subset\fg$, the Weyl group of $\fg_0$ acts projectively on the irreducible $\fg$ module $V_\la\subset W(\bar{w_0}\la)$.
We consider a vector $\sigma v\in W(\bar{w_0}\la)$ defined up to a scalar.
Then it is easy to see using the Vandermonde determinant that the cyclic $\fn^{af}$-submodule
${\rm U}(\fn^{af})\sigma(v)$ coincides with the whole module $W(\bar{w_0}\la)$. 
We consider the following filtration on this module:
$F_0=\mathbb{C}\sigma(v)$ and
\[F_r={\rm span}\{(x_1\T t^{s_1}\dots x_l\T t^{s_l}).\sigma(v)\},
s_1+\dots+s_l\le r, x_a\T t^{s_a}\in \fn^{af}\}.\]
Let $W_{\sigma,\lambda}^{\star}$ be the corresponded graded module.

To stress the difference between the weights of $\fg$ and $\fg_0$ we denote by $\om_i$ the fundamental weight of $\fg$ and by 
$\omega_{0i}$, $i=1,\dots,n$, $n={\rm rk}(\fg_0)$ the fundamental weights of $\fg_0$. We use this notation in this subsection only.
\begin{lem}\label{lowerbound}
Let $\lambda=-\sum_{i=1}^nm_i \omega_i$ be an antidominant weight of the algebra $\mathfrak{g}$.
Let $\lambda_0$ be the antidominant weight of $\mathfrak{g}_0$ defined by:
$\lambda_0=-\sum_{i=1}^nm_i \omega_{0i}$ for dual untwisted types and
$\lambda_0=-\sum_{i=1}^{n-1} m_i \omega_{0i}- 2m_n \omega_{0n}$ for types $A^{(2)}_{2n}$
(see  Figure \ref{InclusionInvariants} for the numbering). We consider the $\fg[t]$-module $W(\bar w_0\lambda)$. Then for any element $\sigma$
of the Weyl group $W$ of the Lie algebra $\mathfrak{g}_0$ we have:
\begin{equation}\label{dimensioninequation}
\dim W_{\sigma(\lambda_0)}\geq \dim W(\bar {w_0}\lambda).
\end{equation}
\end{lem}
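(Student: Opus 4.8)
The plan is to construct a surjection of $\fn^{af}$-modules from $W_{\sigma(\lambda_0)}$ onto the graded module $W_{\sigma,\lambda}^\star$. By the construction preceding the Lemma we have $W(\bar{w_0}\lambda)=\U(\fn^{af})\sigma(v)$ for the shifted action, so $W_{\sigma,\lambda}^\star$ is a cyclic $\fn^{af}$-module generated by the image $\bar v$ of $\sigma(v)$ in $F_0$, and $\dim W_{\sigma,\lambda}^\star=\dim W(\bar{w_0}\lambda)$ since passing to the associated graded of a filtered space preserves dimension. By the universal property of the generalized Weyl module it then suffices to check that $\bar v$ satisfies the defining relations \eqref{weylvanishing0}--\eqref{weylbound2} (resp. \eqref{weylvanishingmix0}--\eqref{weylboundmix4} in the mixed case); the inequality \eqref{dimensioninequation} is then immediate.

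The main device is conjugation by $\sigma$. Since $\sigma$ lies in the Weyl group of $\fg_0=\fg^\tau$, it is represented by an operator commuting with $\tau$, hence preserving each $\fg_k$ and sending $e_{\gamma+k\delta}$ to a nonzero multiple of $e_{\sigma(\gamma)+k\delta}$. Consequently $\widehat\sigma(e_\beta)\sigma(v)$ equals a scalar times $\sigma$ applied to $e_{\sigma^{-1}(\mathrm{Re}\,\widehat\sigma(\beta))+\deg(\widehat\sigma(\beta))\delta}\,v$, and from the definition of $\widehat\sigma$ one reads off that the resulting operator has negative real part $-\alpha$ for \eqref{weylvanishing1} and positive real part $\alpha$ for \eqref{weylbound2}. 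Recall that $v$ satisfies $\fn_-(\fg)\otimes\bC[t]\,v=0$, $(\fh\otimes t\bC[t])\,v=0$ and $hv=\lambda(h)v$, and has $\fg_0$-weight $\lambda_0$. For \eqref{weylvanishing1} the conjugated operator is $y_{-\alpha}\otimes t^{m'}$ with $y_{-\alpha}\in\fn_-(\fg)$ of $\fh$-weight $-\alpha$ and $m'\in\{0,{\rm len}(\alpha)\}$; expanding the shift $(t+z)^{m'}$ produces only elements of $\fn_-(\fg)\otimes\bC[t]$, so it annihilates $v$ on the nose. For \eqref{weylvanishing0} the conjugated operator is $\sigma^{-1}(h)\otimes(t+z)^k$ with $\sigma^{-1}(h)\in\fh$; its positive-degree summands kill $v$, while the degree-zero summand $z^k\lambda(\sigma^{-1}h)v$ lies in $F_0\subset F_{k-1}$, so its class in $F_k/F_{k-1}$ vanishes and \eqref{weylvanishing0} holds in $W_{\sigma,\lambda}^\star$.

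For the power relation \eqref{weylbound2} the conjugated operator is a raising operator $y_\alpha\otimes t^m$ with $\alpha\in\Delta_+$, $y_\alpha\in\fg_m$ of $\fh$-weight $\alpha$ and $m\in\{0,{\rm len}(\alpha)\}$. In all the dual untwisted cases $y_\alpha=\sum_i c_i e_{\beta_i}$, where the $\beta_i$ form the $\tau$-orbit of $\fg$-roots restricting to $\alpha$; these roots are mutually orthogonal, so the $e_{\beta_i}$ commute and $\alpha^\vee=\sum_i\beta_i^\vee$. For each $i$ the vector $v$ is a lowest weight vector for $\msl_2^{\beta_i}\otimes\bC[t]$, whence $\U(\msl_2^{\beta_i}\otimes\bC[t])v$ is a quotient of an $\msl_2$-Weyl module and $(e_{\beta_i}\otimes t^p)^{-\langle\beta_i^\vee,\lambda\rangle+1}v=0$ for every $p\ge0$. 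Expanding $(y_\alpha\otimes(t+z)^m)^N$ multinomially with $N=-\langle\alpha^\vee,\lambda_0\rangle+1=\sum_i(-\langle\beta_i^\vee,\lambda\rangle)+1$, the pigeonhole principle forces a factor $(e_{\beta_i}\otimes t^p)^{k_i}$ with $k_i>-\langle\beta_i^\vee,\lambda\rangle$ in every monomial of top $t$-degree; hence the top-degree component annihilates $v$ and \eqref{weylbound2} holds in $W_{\sigma,\lambda}^\star$.

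The main obstacle is the mixed type $A^{(2)}_{2n}$, where $\fg_1\simeq V_{2\omega_1}$ and the $\tau$-orbit of a short root of $\fg_0=B_n$ is not orthogonal but consists of a root together with its double. Here the clean product of commuting $\msl_2$'s is replaced by a rank-one subalgebra of type $A^{(2)}_2$, which is precisely why the power in \eqref{weylboundmix4} carries the floor $[-\langle\alpha^\vee,\lambda\rangle/2]$. I would verify the short-root relations \eqref{weylboundmix3}--\eqref{weylboundmix4} by isolating the relevant $A^{(2)}_2$-subalgebra and computing there the vanishing of the top-degree operator, using $\lambda_0=-\sum_{i<n}m_i\omega_{0i}-2m_n\omega_{0n}$; the long-root relation \eqref{weylboundmix2} and the imaginary and vanishing relations \eqref{weylvanishingmix0}--\eqref{weylvanishingmix1} are handled exactly as above. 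Once all defining relations are confirmed the surjection exists and \eqref{dimensioninequation} follows uniformly.
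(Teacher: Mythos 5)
Your proposal is correct and follows essentially the same route as the paper: both construct an epimorphism from $W_{\sigma(\lambda_0)}$ onto the associated graded module $W_{\sigma,\lambda}^{\star}$ of the shifted (fusion-like) action and verify the defining relations on $\sigma(v)$, reducing the power relations to rank-one ($\msl_2$, resp. $A_2^{(2)}$) computations via the $\tau$-orbit structure of the roots. The only cosmetic difference is that you conjugate by $\sigma$ first and handle arbitrary positive roots directly by a multinomial/pigeonhole argument, whereas the paper checks simple roots via the averaged operators $\frac{1}{j}\sum_k\tau^k e_i$ and then propagates to all positive roots and to $\sigma(v)$; the substance is the same.
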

\begin{proof}
We show that there exists an epimorphism $W_{\sigma(\lambda_0)} \rightarrow W_{\sigma,\lambda}^{\star}$.
So we need to prove that all the defining relations of the module $W_{\sigma(\lambda_0)}$ hold in the module $W_{\sigma,\lambda}^{\star}$.
For a diagram automorphism $\tau$ and a simple root vector $e_{i}$ except for the root $e_n$ of $A_{2n}^{(2)}$ we have:
\[[e_{i},\tau e_{i}]=0,\]
(see Figure \ref{InclusionInvariants}). In type $A_{2n}^{(2)}$ the vectors
$e_n, \tau e_n,[e_{i},\tau e_i]$ form a Heisenberg algebra.

Let $v$ be the lowest weight vector of $W(\bar{w_0}\lambda)$.
Note that any orbit of the group generated by $\tau$ contains exactly one element $e_i$ such that $i \leq n$.
We have $\widehat{\sigma} (e_{\alpha + \delta}).\sigma(v)=0$, $\alpha \in \Delta_-$.
For all cases except for $i=n$ in type $A_{2n}^{(2)}$ we have:
\[\left(\frac{\sum_{k=0}^{j-1} \tau^k e_{i}}{j}\right)^{m_i+1}.v=0.\]
Analogously for $A_{2n}^{(2)}$ we have: $\left(e_n+\tau e_n\right)^{2m_n+1}. v=0.$
Hence for any positive root $\alpha \in \Delta_+$
\[ e_{\alpha}^{-\langle \lambda_0, \alpha^\vee \rangle+1}.v=0.\]
Thus using the action of the Weyl group we obtain
\[ e_{\sigma(\alpha)}^{-\langle \lambda_0, \alpha^\vee \rangle+1}.\sigma(v)=0.\]

Thus rank-one computation shows that
\[\widehat{\sigma}( e_{\alpha})^{-\langle \lambda_0, \alpha^\vee \rangle+1}.\sigma(v)=0\]
and
\[\widehat{\sigma}( e_{\alpha})^{\left[-\frac{\langle \lambda_0, \alpha^\vee \rangle}{2}\right]+1}.\sigma(v)=0\]
if $\fg$ is of type $A^{(2)}_{2n}$, $\alpha$ is a short root and $\sigma(\alpha)\in \Delta_-$.

So we have proved that the defining relations of $W_{\sigma(\lambda_0)}$ hold in $W(\bar{w_0}\lambda)$ with the shifted action. Hence
they also hold in $W_{\sigma,\lambda}^{\star}$. This completes the proof of the Lemma.
\end{proof}

\subsection{tQBG and Weyl modules}
In the following lemma we give a criterion of the existence of edges in the twisted quantum Bruhat graph.
In part $ii)$ by a long root we mean a root such that there exists another root of a smaller length.

\begin{lem}\label{edgesinQBG}
Consider the dual untwisted case.
For $\sigma \in W$, $\gamma \in \Delta_+$ the two following statements are equivalent:

$i)$ there is an edge in the twisted quantum Bruhat graph $\sigma \stackrel{\gamma}{\longrightarrow} \sigma s_{\gamma}$;

$ii)$ there are no elements $\alpha, \beta \in \Delta_+$ such that
$\alpha, \beta \neq \gamma$, $\alpha + \beta=2\frac{\langle \alpha, \gamma \rangle}{\langle \gamma, \gamma \rangle}\gamma$,
$\widehat{\sigma}(\alpha) + \widehat{\sigma}(\beta)=
2\frac{\langle \alpha, \gamma \rangle}{\langle \gamma, \gamma \rangle}\widehat{\sigma}(\gamma)$;
if $\sigma\gamma\in\Delta_-$, then additionally
$\gamma$ is not a long nonsimple root contained in a rank two subalgebra, generated by roots from $\Delta_+$.
%Equivalently the subalgebra spanned on roots in $\mathbb{Q}\langle \widehat{w}(\alpha),
%\widehat{w}(\beta),\widehat{w}(\gamma) \rangle$ is of rank $3$ or this algebra is nonsimply-laced and $\gamma$ is short.
\end{lem}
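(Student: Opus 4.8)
The plan is to rewrite both conditions in terms of the inversion set $N(w)=\{\beta\in\Delta_+:w\beta\in\Delta_-\}$ and then reduce everything to rank two. I will use the standard count
\[
\ell(\sigma s_\gamma)=\ell(\sigma)+\ell(s_\gamma)-2\,|N(\sigma)\cap N(s_\gamma)|,
\]
obtained by computing $N(\sigma s_\gamma)$: the map $s_\gamma$ identifies $\Delta_+\setminus N(s_\gamma)$ with itself and the involution $\beta\mapsto -s_\gamma\beta$ permutes $N(s_\gamma)$, and tracking signs gives the formula. Since $\langle 2\rho^\vee,\gamma\rangle\ge 2$, a quantum edge always strictly decreases length, so the Bruhat alternative occurs exactly when $\sigma\gamma\in\Delta_+$ and the quantum alternative exactly when $\sigma\gamma\in\Delta_-$. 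Now $N(s_\gamma)$ is the disjoint union of $\{\gamma\}$ and the two–element orbits of $\beta\mapsto -s_\gamma\beta$; each orbit is a pair $\{\alpha,\beta\}\subset\Delta_+\setminus\{\gamma\}$ with $\alpha+\beta=\langle\alpha,\gamma^\vee\rangle\gamma$, i.e. precisely the pairs occurring in $ii)$. Applying $\sigma$ shows the real part of the second relation in $ii)$ is automatic, so that relation amounts to the $\delta$–degree equality $\deg\widehat\sigma(\alpha)+\deg\widehat\sigma(\beta)=\langle\alpha,\gamma^\vee\rangle\deg\widehat\sigma(\gamma)$, where $\deg\widehat\sigma(\mu)={\rm len}(\mu)$ if $\sigma\mu\in\Delta_-$ and $\deg\widehat\sigma(\mu)=0$ otherwise. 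Thus both $i)$ and $ii)$ are governed by membership of $\gamma,\alpha,\beta$ in $N(\sigma)$, with $\ell(\sigma)-\ell(\sigma s_\gamma)=2|N(\sigma)\cap N(s_\gamma)|-\ell(s_\gamma)$, to which $\gamma$ contributes $+1$ or $-1$ and each pair contributes $2|\{\alpha,\beta\}\cap N(\sigma)|-2$.

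First I would treat $\sigma\gamma\in\Delta_+$, where only a Bruhat edge is possible and the edge exists iff $\ell(\sigma s_\gamma)=\ell(\sigma)+1$. Here $\gamma\notin N(\sigma)$ contributes $-1$, and the relation $\sigma\alpha+\sigma\beta=\langle\alpha,\gamma^\vee\rangle\sigma\gamma$ with $\sigma\gamma\in\Delta_+$ forbids $\sigma\alpha,\sigma\beta$ from being simultaneously negative, so each pair contributes $0$ or $-2$; the edge exists iff no pair contributes $-2$, i.e. iff no pair has $\alpha,\beta\notin N(\sigma)$. On the other hand $\deg\widehat\sigma(\gamma)=0$, so the degree equality of $ii)$ reads ${\rm len}(\alpha)[\alpha\in N(\sigma)]+{\rm len}(\beta)[\beta\in N(\sigma)]=0$, which (as ${\rm len}>0$) holds iff $\alpha,\beta\notin N(\sigma)$. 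Hence an obstructing pair exists iff some pair contributes $-2$, giving $i)\Leftrightarrow ii)$; the argument is length–free and the extra clause of $ii)$ is vacuous.

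Next comes $\sigma\gamma\in\Delta_-$, where only a quantum edge is possible. Now $\gamma\in N(\sigma)$ contributes $+1$ and the same linear relation makes each pair contribute $0$ or $+2$; the edge exists iff $\ell(\sigma)-\ell(\sigma s_\gamma)$ attains its maximal value $\langle 2\rho^\vee,\gamma\rangle-1$, which needs $\langle 2\rho^\vee,\gamma\rangle-1\le\ell(s_\gamma)$ and then forces every pair to contribute $+2$. The key rank two computation is the length identity $\langle\alpha,\alpha\rangle=\langle\alpha,\gamma^\vee\rangle\langle\gamma,\gamma\rangle$ (equivalently ${\rm len}(\alpha)={\rm len}(\beta)=\langle\alpha,\gamma^\vee\rangle\,{\rm len}(\gamma)$): it holds whenever the pair roots are not strictly shorter than $\gamma$, and fails exactly when they are, i.e. when $\gamma$ is a long nonsimple root of a rank two ($B_2$ or $G_2$) subsystem generated by positive roots. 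In the first situation $\langle 2\rho^\vee,\gamma\rangle-1=\ell(s_\gamma)$ (Case A), and since $\deg\widehat\sigma(\gamma)={\rm len}(\gamma)$ the degree equality becomes ${\rm len}(\alpha)[\alpha\in N(\sigma)]+{\rm len}(\beta)[\beta\in N(\sigma)]=\langle\alpha,\gamma^\vee\rangle\,{\rm len}(\gamma)$, which by the identity holds iff exactly one of $\alpha,\beta$ lies in $N(\sigma)$, i.e. iff the pair does not contribute $+2$; thus no obstructing pair exists iff every pair contributes $+2$ iff the quantum edge exists. In the second situation $\langle 2\rho^\vee,\gamma\rangle-1>\ell(s_\gamma)$, no quantum edge exists, and the additional clause of $ii)$ fails, so $i)$ and $ii)$ are again both false.

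The crux and the main obstacle is the rank two bookkeeping: one must show that the length identity is equivalent to $\langle 2\rho^\vee,\gamma\rangle-1=\ell(s_\gamma)$ and that its failure is detected precisely by the extra clause of $ii)$ (one checks $\langle 2\rho^\vee,\gamma\rangle-1-\ell(s_\gamma)=2\cdot\#\{\text{pairs with roots strictly shorter than }\gamma\}$). I would carry this out by going through the planes spanned by $\gamma$ and a pair root, namely $A_1\times A_1$, $A_2$, $B_2$ and $G_2$; the type $G_2$ is the delicate one, since there $\langle\alpha,\gamma^\vee\rangle\in\{1,2,3\}$ and the admissible pairs follow the patterns recorded in Corollary \ref{order}, from which the required identities can be read off. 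The reduction itself is legitimate because every root of $N(s_\gamma)$, and hence every ingredient of $i)$ (through the formula for $\ell(\sigma s_\gamma)$) and of $ii)$ (through the pairs $\{\alpha,\beta\}$), lies in a rank two subsystem containing $\gamma$, and only the signs of $\sigma$ on $N(s_\gamma)$ matter; so it suffices to verify the equivalence one plane at a time.
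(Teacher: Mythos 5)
Your argument is correct and is, in substance, the argument the paper itself omits: the published proof is a one-line reference to [FM3, Lemma 2.12] ``with roots and coroots interchanged,'' and your inversion-set computation --- pairing $N(s_\gamma)\setminus\{\gamma\}$ into orbits $\{\alpha,-s_\gamma\alpha\}$, comparing $2\,|N(\sigma)\cap N(s_\gamma)|-\ell(s_\gamma)$ with $\pm 1$ and with $\langle 2\rho^\vee,\gamma\rangle-1$, and matching each pair's contribution against the $\delta$-degree form of the equation in $(ii)$ --- is exactly that argument carried out explicitly in the dual (roots-and-coroots swapped) setting. One small correction: the parenthetical identity $\langle 2\rho^\vee,\gamma\rangle-1-\ell(s_\gamma)=2\cdot\#\{\text{pairs with roots strictly shorter than }\gamma\}$ is false in type $G_2$, where a pair with $\langle\alpha^\vee,\gamma\rangle=3$ contributes $4$ to the deficit; the correct identity is $\langle 2\rho^\vee,\gamma\rangle-1-\ell(s_\gamma)=\sum_{\text{pairs}}2\bigl(\langle\alpha^\vee,\gamma\rangle-1\bigr)$, which still vanishes exactly when no pair root is strictly shorter than $\gamma$, so the only use you make of it is unaffected.
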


\begin{proof}
The proof is analogous to the proof for untwisted algebras by interchanging roots and coroots (see \cite{FM3}, Lemma 2.12).
\end{proof}

\begin{definition}\label{betasWeyl}
Let $\bar\beta=(\beta_1, \dots, \beta_r)$ be a sequence of affine roots.\label{quasiweylmodules}
Consider a weight $\sigma(\lambda)$, where $\sigma \in W$, $\lambda\in X_-$.
Then the generalized Weyl module with characteristics $W_{\sigma(\lambda)}(\bar\beta,m)$, $m=0,\dots,r$ is the cyclic $\fn^{af}$
module with a generator $v$ and the following relations:
$h v=0, h\in \fh_{k\delta}, k \geq 1$ and for all  $\al\in\Delta_+$: $\widehat\sigma(e_{-\alpha+ \delta}) v=0$ and
\begin{gather*}
\widehat{\sigma}(e_{\alpha})^{\left[\frac{l_{\al,m}}{2}\right]+1}v=0, \text{ if } \fg \text{ of type } A^{(2)}_{2n},
\sigma(\alpha) \in \Delta_-,\ \al \text{ is short};\\
\widehat{\sigma}(e_{\alpha})^{l_{\al,m}+1}v=0, \text{ otherwise},
\end{gather*}
where $l_{\al,m}= -{\langle \alpha^\vee, \lambda \rangle-|\lbrace \beta_i|{\rm Re} \beta_i=-\alpha, i \leq m \rbrace|}.$
\end{definition}

\begin{rem}
If $m=0$, then $W_{\sigma(\lambda)}(\bar\beta,0)\simeq W_{\sigma(\lambda)}$. Now assume that $m=l(t_{-\om_i})$ and
the sequence of roots $\bar\beta$ comes from a reduced decomposition of $t_{-\omega_i}$. Then according to Proposition
\ref{descriptionbeta}, part $a)$, we have an isomorphism
\[
W_{\sigma(\lambda)}(\bar\beta,m)\simeq W_{\sigma(\lambda+\omega_i)}.
\]
\end{rem}

%\begin{rem}
%All the definitions above were given for a simple $\fg$. However, everything works fine in the semisimple case.
%We only need this generalization in Lemma \ref{subsystemofrank2} below for $L$ of type $A_1\oplus A_1$.
%\end{rem}

Let $\tau_1, \tau_2 \in \Delta_+$ be two roots from the root system of $\fg_0$. Let $L_2$ be the semisimple Lie algebra with the root system
$\Delta'\subset \Delta$ spanned by roots $\tau_1, \tau_2$. Let ${\fn^{af}}'$ be the subalgebra of $\fn^{af}$ generated
by the elements of the form $e_{\gamma+ k \delta}\in \fn^{af}$, where $\gamma\in \Delta'\cup 2\Delta'$, $k \geq 0$.
\begin{rem}
The set $2\Delta'$ shows up only in the $A^{(2)}_{2n}$ case.
\end{rem}

\begin{lem}\label{subsystemofrank2}
For the $\fn^{af}$-module
$W_{\sigma(\lambda)}(\bar\beta,m)$ we define ${\fn^{af}}'$-submodule
$M_2={\rm U}({\fn^{af}}')v\subset W_{\sigma(\lambda)}(\bar\beta,m)$, where
$v$ is the cyclic vector and $m$ satisfies $\sigma({\rm Re}\beta_{m+1})\in \mathbb{Z}\langle \tau_1, \tau_2\rangle$.
Then $M_2$ is a quotient of some ${\fn^{af}}'$ module of the form $W_{\widetilde \sigma(\widetilde \lambda)}(\widetilde\beta,\widetilde m)$,
where $\widetilde \sigma$, $\widetilde \lambda$, $\widetilde\beta$, $\widetilde m$ are parameters for ${\fn^{af}}'$.
In addition,  $\sigma {\rm Re} \beta_{m+1}=
\widetilde\sigma{\rm Re} \widetilde{\beta}_{\widetilde m+1}$.
\end{lem}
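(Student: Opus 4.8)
The plan is to exhibit a surjection of ${\fn^{af}}'$-modules $W_{\widetilde\sigma(\widetilde\lambda)}(\widetilde\beta,\widetilde m)\twoheadrightarrow M_2$ carrying the cyclic generator of the rank-two module to $v$. Since $v$ generates $M_2$ over ${\fn^{af}}'$ by definition, the universal property of the generalized Weyl module with characteristics (Definition~\ref{betasWeyl}, applied to ${\fn^{af}}'$ and $L_2$) reduces everything to choosing the parameters $\widetilde\sigma,\widetilde\lambda,\widetilde\beta,\widetilde m$ and checking that each defining relation of $W_{\widetilde\sigma(\widetilde\lambda)}(\widetilde\beta,\widetilde m)$ already holds for $v$ inside $W_{\sigma(\lambda)}(\bar\beta,m)$ (relations valid in the ambient module and lying in ${\fn^{af}}'$ then automatically hold in the submodule $M_2$).

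First I would set up the combinatorial correspondence. Put $\Delta''=\sigma^{-1}(\Delta')$; as $\sigma\in W$ is an isometry, $\Delta''$ is a rank-two root subsystem and $\sigma\colon\Delta''\to\Delta'$ is an isomorphism of root systems. Equip $L_2$ with the induced positive system $\Delta'_+=\Delta'\cap\Delta_+$ and with the grading inherited from ${\fn^{af}}'$, so that the normalized length of any $\gamma\in\Delta'$ and the admissible $\delta$-degrees of its root vectors are literally the ambient ones. Because $\sigma(\Delta''_+)$ is a positive system of $\Delta'$, there is a unique $\widetilde\sigma\in W(\Delta')$ with $\widetilde\sigma(\Delta'_+)=\sigma(\Delta''_+)$. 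Let $\widetilde\mu$ be the $L_2$-weight determined by $\langle\gamma^\vee,\widetilde\mu\rangle=\langle\gamma^\vee,\sigma(\lambda)\rangle$ for $\gamma\in\Delta'$, and set $\widetilde\lambda=\widetilde\sigma^{-1}(\widetilde\mu)$. Finally let $i_1<i_2<\cdots$ enumerate the indices with ${\rm Re}\beta_i\in\Delta''$, put ${\rm Re}\widetilde\beta_j=\widetilde\sigma^{-1}\sigma({\rm Re}\beta_{i_j})$ (with any compatible degree, since only the real parts enter the relations), and let $\widetilde m$ be the number of $i_j\le m$. The hypothesis $\sigma({\rm Re}\beta_{m+1})\in\mathbb{Z}\langle\tau_1,\tau_2\rangle$ forces $i_{\widetilde m+1}=m+1$, whence ${\rm Re}\widetilde\beta_{\widetilde m+1}=\widetilde\sigma^{-1}\sigma({\rm Re}\beta_{m+1})$, i.e. $\widetilde\sigma\,{\rm Re}\widetilde\beta_{\widetilde m+1}=\sigma\,{\rm Re}\beta_{m+1}$, the asserted identity.

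Next I would match the relations one by one. For $\gamma'\in\Delta'_+$ set $\alpha=\sigma^{-1}\widetilde\sigma(\gamma')$; then $\alpha\in\Delta''_+\subset\Delta_+$ and $\sigma(\alpha)=\widetilde\sigma(\gamma')$, so $\gamma'$ (for $L_2$) and $\alpha$ (for $\fg_0$) correspond. Because $\Delta'_+=\Delta'\cap\Delta_+$, the sign test ``$\widetilde\sigma(\gamma')\in\Delta'_-$'' is equivalent to ``$\sigma(\alpha)\in\Delta_-$'', and since $\sigma,\widetilde\sigma$ preserve the ambient length, which is the one inherited by ${\fn^{af}}'$, one gets the equalities of affine roots $\widehat{\widetilde\sigma}(\gamma')=\widehat\sigma(\alpha)$ and $\widehat{\widetilde\sigma}(-\gamma'+{\rm len}(\gamma')\delta)=\widehat\sigma(-\alpha+{\rm len}(\alpha)\delta)$. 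Thus the rank-two vanishing relation is exactly the ambient vanishing relation for $\alpha$, while $\fh_{k\delta}v=0$ covers the imaginary relations. For the exponents, $\langle\gamma'^\vee,\widetilde\lambda\rangle=\langle\alpha^\vee,\lambda\rangle$ (from $\widetilde\lambda=\widetilde\sigma^{-1}\widetilde\mu$ and $\sigma(\lambda)|_{\Delta'}=\widetilde\mu$), and the construction of $\widetilde\beta,\widetilde m$ gives $|\{\widetilde\beta_j:{\rm Re}\widetilde\beta_j=-\gamma',\,j\le\widetilde m\}|=|\{\beta_i:{\rm Re}\beta_i=-\alpha,\,i\le m\}|$, so that $l_{\gamma',\widetilde m}=l_{\alpha,m}$ and the two bounded relations coincide verbatim. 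In particular $\langle\gamma'^\vee,\widetilde\lambda\rangle\le0$, so $\widetilde\lambda$ is antidominant for $L_2$, as Definition~\ref{betasWeyl} requires. Each rank-two relation is therefore literally one of the defining relations of $W_{\sigma(\lambda)}(\bar\beta,m)$ indexed by $\alpha\in\Delta''_+$, hence holds on $v$.

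I expect the \emph{main obstacle} to be the normalization of lengths and of the $\delta$-grading when passing from $\fn^{af}$ to ${\fn^{af}}'$: one must make sure the ``lowest'' affine generator $e_{-\gamma+{\rm len}(\gamma)\delta}$ entering a rank-two vanishing relation is the very same element of ${\fn^{af}}'$ as the one in the ambient relation, which is precisely why I fix on ${\fn^{af}}'$ the inherited grading rather than an abstract $L_2$-normalization. In type $A^{(2)}_{2n}$ this requires the extra check that the doubled roots $2\Delta'$ and the floor-function exponent $\bigl[\tfrac{l_{\alpha,m}}{2}\bigr]+1$ of Definition~\ref{betasWeyl} transport correctly: for a short $\alpha$ with $\sigma(\alpha)\in\Delta_-$ the corresponding $\gamma'$ is short with $\widetilde\sigma(\gamma')\in\Delta'_-$, so the floors are applied to the equal integers $l_{\alpha,m}=l_{\gamma',\widetilde m}$ and the two bounds again agree. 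Combining all cases, $v$ satisfies every defining relation of $W_{\widetilde\sigma(\widetilde\lambda)}(\widetilde\beta,\widetilde m)$, which yields the desired surjection and proves that $M_2$ is a quotient of a module of the required form.
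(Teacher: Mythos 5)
Your proof is correct, but it takes a genuinely different route from the paper's. The paper disposes of the lemma by classifying the possible pairs $(\Delta',{\fn^{af}}')$: $A_1\oplus A_1$ (trivial), $G_2$ with ${\fn^{af}}'=\fn^{af}$ of type $D_4^{(3)}$ (nothing to prove), $A_2$ with ${\fn^{af}}'$ of type $A_2^{(1)}$ (quoted from \cite{FM3}, Lemma 2.17), $C_2$ with ${\fn^{af}}'$ of type $A_3^{(2)}$ (the untwisted $C_2$ argument of \cite{FM3} with roots and coroots interchanged, via Corollary \ref{order}, $iii)$), and $B_2$ with ${\fn^{af}}'$ of type $A_4^{(2)}$ (reduced to $A_3^{(2)}$ by the recipe of \cite{OS}). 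You instead give a uniform, self-contained transport of the parameters: you construct $(\widetilde\sigma,\widetilde\lambda,\widetilde\beta,\widetilde m)$ explicitly from $\Delta''=\sigma^{-1}(\Delta')$ and match the defining relations one by one, which avoids the appeal to \cite{FM3} and makes the equalities $l_{\gamma',\widetilde m}=l_{\alpha,m}$ and $\sigma\,{\rm Re}\beta_{m+1}=\widetilde\sigma\,{\rm Re}\widetilde\beta_{\widetilde m+1}$ completely explicit; your identification of the inherited length/grading as the correct normalization, and the check that the floor exponents of type $A^{(2)}_{2n}$ transport to short roots of the subsystem, address the two points where such an argument could genuinely go wrong. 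The one thing your route leaves implicit --- and which the paper's case list supplies, and which is then used throughout Section \ref{LRC} --- is the identification of ${\fn^{af}}'$ with the positive part of a concrete rank-two (twisted) affine algebra: the phrase ``Definition \ref{betasWeyl} applied to ${\fn^{af}}'$'' only makes sense once one knows ${\fn^{af}}'$ is of one of the types $A_1\oplus A_1$, $A_2^{(1)}$, $A_3^{(2)}$, $A_4^{(2)}$, $D_4^{(3)}$ (in particular, whether the mixed-type branch of that definition is in force). A sentence enumerating these possibilities, as the paper does, would close this small bookkeeping gap; modulo that, your argument is complete and arguably more transparent than the original.
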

\begin{proof}
Without loss of generality we assume that $\tau_1, \tau_2$ is a basis of $\Delta'=\mathbb{Z}\langle \tau_1, \tau_2\rangle \cap \Delta$.
We have several possible cases  for $\Delta'$ and ${\fn^{af}}'$. The first case: $\Delta'$ is the root system of type $A_1\oplus A_1$
(this happens when $\tau_1$ is orthogonal to $\tau_2$ and $[e_{\tau_1},e_{\tau_2}]=0)$. In this case
the claim is obvious. The second case is $L_2 \simeq G_2$ and ${\fn^{af}}'$ isomorphic to the positive affine nilpotent algebra
of type $D_4^{(3)}$. So we have $\fn^{af}={\fn^{af}}'$ and hence there is nothing to prove. The third case is $\Delta'$ being
the root system of type $A_2$; then ${\fn^{af}}'$ is the positive affine nilpotent algebra
of type $A_2^{(1)}$. This case was worked out in \cite{FM3}, Lemma 2.17. The two remaining cases are $A_3^{(2)}$ and $A_4^{(2)}$
(so $\Delta'$ is either of type $C_2$ or of type $B_2$). The $A_3^{(2)}$ is equivalent to the untwisted $C_2$ situation from
\cite{FM3}. The only difference is that we apply Corollary \ref{order}, $iii)$, to the roots instead of coroots.
Finally, the last $A_4^{(2)}$ case is equivalent to the $A_3^{(2)}$ thanks to the recipe from \cite{OS} (see also subsection \ref{alcovepaths}).
\end{proof}

\subsection{The decomposition procedure}
In order to describe the generalized Weyl modules we follow the same strategy as in \cite{FM3}. Namely,
we consider the sequence of surjections
\[
W_{\sigma(\la)} = W_{\sigma(\la)}(\bar\beta^i,0)\to W_{\sigma(\la)}(\bar\beta^i,1)\to\dots\to
W_{\sigma(\la)}(\bar\beta^i,r)= W_{\sigma(\la+\om_i)},
\]
$\bar\beta^i=\bar\beta(t_{-\omega_i})$, $\langle \lambda, \alpha_i^\vee \rangle \neq 0$
and describe the kernels. The description is given in terms of the twisted quantum Bruhat graph for the twisted
affine Kac-Moody algebra.

\begin{rem}\label{A2rec}
The type $A^{(2)}_{2n}$ is very special. First, the tQBG in this case is of type $D^{(2)}_{n+1}$.
Second, in what follows we will be interested in the paths along the edges of tQBG.
More concretely, given a vertex $\sigma$ of tQBG and an affine root $\beta$ we will check if there is an
edge $\sigma\to \sigma s_{{\rm Re}\beta}$. The existence of such an edge allows us to move
along the graph. However, ONLY in types $A^{(2)}_{2n}$, there is an additional restriction (see \cite{OS}).
Namely, even if an edge does exist, we are not allowed to use it if the edge is quantum and
${\rm deg}\beta^i_{m+1}+\langle {\beta^i_{m+1}}^\vee,\lambda + \omega_i \rangle$ is odd (see Subsection \ref{alcovepaths}, Lemma \ref{firstbetas}).
\end{rem}

\begin{thm}\label{stepofdecomposition}
Let $\bar\beta^i=(\beta_1^i,\dots,\beta_r^i)$ be the sequence of $\beta$'s for some reduced decomposition of the element $t_{-\omega_i}$.
Then we have:

$(i)$ If there is no edge $\sigma \stackrel{{\rm Re} \beta_{m+1}^i}{\longrightarrow} \sigma s_{{\rm Re} \beta_{m+1}}$
or in type $A_{2n}^{(2)}$ the edge $\sigma \stackrel{{\rm Re} \beta_{m+1}^i}{\longrightarrow} \sigma s_{{\rm Re} \beta_{m+1}^i}$
is quantum and ${\rm deg}\beta^{i,\lambda+\omega_i}_{m+1}={\rm deg} \beta^i_{m+1}+\langle {\beta^i_{m+1}}^\vee,\lambda + \omega_i \rangle$
 is odd, then
\[W_{\sigma(\lambda)}(\bar\beta^i,m) \simeq W_{\sigma(\lambda)}(\bar\beta^i,m+1).\]

$(ii)$ Assume there is an edge $\sigma \stackrel{{\rm Re} \beta_{m+1}^i}{\longrightarrow} \sigma s_{{\rm Re}\beta_{m+1}}$.
Then we have the exact sequence
\[
W_{{\sigma}s_{{\rm Re} \beta_{m+1}}(\lambda)}(\bar\beta^i,m+1)\to  W_{\sigma(\lambda)}(\bar\beta^i,m)\to
W_{\sigma(\lambda)}(\bar\beta^i,m+1)\to 0.
\]
\end{thm}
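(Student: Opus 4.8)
The plan is to treat both parts uniformly by comparing defining relations. Write $\gamma={\rm Re}\,\beta_{m+1}^i$; by Proposition \ref{descriptionbeta} this is a negative root, so $\alpha_0:=-\gamma\in\Delta_+$. Passing from $m$ to $m+1$ changes the quantity $l_{\alpha,m}$ of Definition \ref{betasWeyl} only for $\alpha=\alpha_0$, where it drops by one, while all other power bounds and the relations $hv=0$ and $\widehat\sigma(e_{-\alpha+{\rm len}(\alpha)\delta})v=0$ are untouched. Hence sending cyclic vector to cyclic vector gives a canonical surjection
\[
\pi:\ W_{\sigma(\lambda)}(\bar\beta^i,m)\twoheadrightarrow W_{\sigma(\lambda)}(\bar\beta^i,m+1),
\]
whose kernel is the cyclic $\fn^{af}$-submodule generated by the single vector
\[
w:=\widehat\sigma(e_{\alpha_0})^{\,l}\,v,\qquad l=l_{\alpha_0,m}
\]
(with $l$ replaced by $[l_{\alpha_0,m}/2]$ in the short-root $A^{(2)}_{2n}$ case). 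Thus part $(ii)$ reduces to producing a surjection $W_{\sigma s_\gamma(\lambda)}(\bar\beta^i,m+1)\to\ker\pi$, and part $(i)$ reduces to proving $w=0$.

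For part $(ii)$ I would define the map $\phi$ by sending the cyclic generator of $W_{\sigma s_\gamma(\lambda)}(\bar\beta^i,m+1)$ to $w$ (an $\fn^{af}$-map of this kind shifts the $\fh$-weight by a fixed vector, which is harmless). Since $w$ generates $\ker\pi$, once $\phi$ is well defined its image is automatically $\ker\pi$, which yields exactness. The entire content is therefore to check that $w$ satisfies the defining relations of $W_{\sigma s_\gamma(\lambda)}(\bar\beta^i,m+1)$: the relations $hw=0$ for $h\in\fh_{k\delta}$, the relations $\widehat{\sigma s_\gamma}(e_{-\alpha+{\rm len}(\alpha)\delta})\,w=0$, and the lowered power relations for $\widehat{\sigma s_\gamma}(e_\alpha)$. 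Each such relation involves, besides $\alpha_0$, only one further root $\alpha$, so by Lemma \ref{subsystemofrank2} it may be verified inside the rank-two subalgebra ${\fn^{af}}'$ attached to $\{\alpha_0,\alpha\}$, where $M_2=\U({\fn^{af}}')v$ is a quotient of a rank-two generalized Weyl module with characteristics. The existence of the edge $\sigma\to\sigma s_\gamma$ in the tQBG, read through the criterion of Lemma \ref{edgesinQBG}, is precisely what supplies the required commutation and $\mathfrak{sl}_2$-string identities; the resulting rank-one and rank-two computations are carried out in Section \ref{LRC} (in the twisted $B_2/C_2$ subsystems one argues with coroots rather than roots, using Corollary \ref{order}, $iii)$).

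For part $(i)$ I would show $w=0$. When there is no edge $\sigma\to\sigma s_\gamma$, Lemma \ref{edgesinQBG} furnishes the obstruction: either a pair $\alpha,\beta\in\Delta_+\setminus\{\gamma\}$ with $\alpha+\beta=2\tfrac{\langle\alpha,\gamma\rangle}{\langle\gamma,\gamma\rangle}\gamma$ and the matching $\widehat\sigma$-additivity, or $\gamma$ a long nonsimple root of a rank-two subsystem with $\sigma\gamma\in\Delta_-$. In either situation one rewrites $w$ inside the corresponding ${\fn^{af}}'$ using the vanishing relations $\widehat\sigma(e_{-\alpha+{\rm len}(\alpha)\delta})v=0$ and the commutator $[\widehat\sigma(e_\alpha),\widehat\sigma(e_\beta)]$, whereupon the rank-two computation of Section \ref{LRC} forces $w=0$. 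In the remaining $A^{(2)}_{2n}$ subcase, where an edge exists but is quantum with ${\rm deg}\,\beta^{i,\lambda+\omega_i}_{m+1}$ odd, the floor function in the short-root power relation of Definition \ref{betasWeyl} makes the $m$- and $(m+1)$-relations coincide exactly under this parity restriction of Remark \ref{A2rec}, so $\pi$ is already an isomorphism and no rank-two argument is needed.

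The main obstacle is the rank-two verification in the twisted setting. The structural steps (the surjection, the identification of $\ker\pi$, and the reduction of Lemma \ref{subsystemofrank2}) are formal, but matching the edge criterion of Lemma \ref{edgesinQBG} to the rank-two module structure requires a careful case analysis of the subsystems $A_1\oplus A_1$, $A_2$, $B_2$, $C_2$ and $G_2$, now with roots and coroots interchanged relative to \cite{FM3}, and with the $A^{(2)}_{2n}$ parity constraint and the doubled short roots $2\Delta'$ genuinely affecting the $B_2/C_2$ computations. This is precisely the work relegated to Section \ref{LRC}.
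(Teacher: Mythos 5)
Your proposal is correct and follows essentially the same route as the paper: the paper's proof is precisely the reduction to rank two via Lemmas \ref{edgesinQBG} and \ref{subsystemofrank2}, with the surjection onto $W_{\sigma(\lambda)}(\bar\beta^i,m+1)$, the identification of its kernel as $\U(\fn^{af})\widehat\sigma(e_{\alpha_0})^{l_{\alpha_0,m}}v$, the vanishing of that vector in case $(i)$, and the map from $W_{\sigma s_\gamma(\lambda)}(\bar\beta^i,m+1)$ in case $(ii)$ all carried out case by case in Section \ref{LRC} (Propositions \ref{edgesD32} and \ref{decompositionC_2} and their $A^{(2)}_4$, $D^{(3)}_4$ analogues), exactly as you describe. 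Your treatment of the $A^{(2)}_{2n}$ parity subcase via the floor function also matches the paper's argument.
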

\begin{proof}
As in \cite{FM3}, we reduce the proof to the rank two case using Lemmas \ref{edgesinQBG} and \ref{subsystemofrank2}.
\end{proof}

\begin{cor}\label{decompositioninequality}
\[{\rm ch} W_{\sigma(\lambda-\om_i)}\le \sum_{p\in \mathcal{QB}(\sigma,\lambda,\bar\beta^{i,\lambda})}
q^{{\rm deg}({\rm qwt}(p))} {\rm ch} W_{dir({\rm end}(p))(\la)}^{wt ({\rm end} (p))},\]
\[{\rm ch} W_{\sigma(\lambda)}\le C_\sigma^{t_{\lambda}},\]
where inequalities mean the coefficient-wise inequalities.
\end{cor}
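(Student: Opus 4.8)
The plan is to assemble both inequalities from the two structural results already in hand: Theorem \ref{stepofdecomposition}, which controls a single step of the filtration on the module side, and Theorem \ref{combinatorialdecomposition}, its combinatorial shadow for the functions $C_u^w$. The first displayed inequality will come from iterating the exact sequences of Theorem \ref{stepofdecomposition} and reading off the characters; the second will then follow from the first by induction on the weight, feeding in Theorem \ref{combinatorialdecomposition}. Throughout, $\lambda\in X_-$ and the inequalities are coefficient-wise.

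For the first inequality I would run the decomposition procedure for the weight $\lambda-\om_i$, i.e. along the chain
\[
W_{\sigma(\lambda-\om_i)}=W_{\sigma(\lambda-\om_i)}(\bar\beta^i,0)\to\cdots\to W_{\sigma(\lambda-\om_i)}(\bar\beta^i,r)=W_{\sigma(\lambda)}.
\]
At each index $m\to m+1$ Theorem \ref{stepofdecomposition} gives one of two outcomes. In case $(i)$ (no tQBG edge, or the $A_{2n}^{(2)}$ parity obstruction) the two modules are isomorphic and the character is unchanged. In case $(ii)$ the exact sequence
\[
W_{\sigma s_{{\rm Re}\beta_{m+1}}(\lambda-\om_i)}(\bar\beta^i,m+1)\to W_{\sigma(\lambda-\om_i)}(\bar\beta^i,m)\to W_{\sigma(\lambda-\om_i)}(\bar\beta^i,m+1)\to 0
\]
shows that the middle character is bounded coefficient-wise by the right-hand character plus the character of the image of the left map, and the latter is at most the character of the source, since the image is a quotient of it. Iterating over all $m$ unfolds into a branching: at each edge one either passes to the quotient (direction unchanged) or moves to the module generated in the new direction $\sigma\to\sigma s_{{\rm Re}\beta_{m+1}}$, while at non-edges only the isomorphism occurs. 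The complete admissible branches are exactly the alcove paths in $\mathcal{QB}(\sigma,\lambda,\bar\beta^{i,\lambda})$: the terminal module of a branch is $W_{{\rm dir}({\rm end}(p))(\lambda)}$, the accumulated $\fg_0$-weight shift is recorded by $wt({\rm end}(p))$ (hence the superscript), and the degree shift produced by the quantum edges, i.e. by $J^-$, is recorded by $q^{\deg({\rm qwt}(p))}$. The $A_{2n}^{(2)}$ case is consistent because the transitions declared isomorphisms in Theorem \ref{stepofdecomposition}$(i)$ are precisely those excluded from $\mathcal{QB}$ in type $A_{2n}^{(2)}$ (Remark \ref{A2rec}).

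For the second inequality I would induct on the antidominant weight $\lambda\in X_-$. The base case $\lambda=0$ is immediate: $W_{\sigma(0)}$ is the trivial one-dimensional module, so ${\rm ch}\,W_{\sigma(0)}=1$, while $C_\sigma^{t_0}=C_\sigma^{{\rm id}}=1$ because the only alcove path is the empty one. For the inductive step I write the given weight as $\lambda-\om_i$ with $\lambda\in X_-$ strictly closer to $0$ (choosing $i$ with $\langle\lambda-\om_i,\al_i^\vee\rangle<0$), apply the first inequality, and then invoke the induction hypothesis ${\rm ch}\,W_{\kappa(\lambda)}\le C_\kappa^{t_\lambda}$ for every $\kappa\in W$. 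Rewriting the superscripted characters by ${\rm ch}\,W_\mu^\nu=x^{\nu-\mu}\,{\rm ch}\,W_\mu$ and using the translation property $C_{t_\mu u}^{w}=x^\mu C_u^{w}$, the resulting bound becomes term-by-term the right-hand side of Theorem \ref{combinatorialdecomposition}, which equals $C_\sigma^{t_{\lambda-\om_i}}$; this closes the induction.

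The main obstacle, as in the untwisted template of \cite{FM3}, is the bookkeeping that forces the two unfoldings to coincide: one must verify that the tree of branchings produced by the exact sequences is in weight- and degree-preserving bijection with $\mathcal{QB}(\sigma,\lambda,\bar\beta^{i,\lambda})$, that the degree shifts introduced at quantum edges sum to $\deg({\rm qwt}(p))$, and that the iterated weight shifts $x^{\nu-\mu}$ coming from the modules $W_\mu^\nu$ line up exactly with the factors $x^{wt({\rm end}(p))-\lambda}$ in Theorem \ref{combinatorialdecomposition}. The only genuinely new ingredient relative to \cite{FM3} is the parity restriction in type $A_{2n}^{(2)}$, which has to be tracked simultaneously on the module side (case $(i)$ of Theorem \ref{stepofdecomposition}) and on the combinatorial side (the exclusion of quantum edges of odd imaginary part in the definition of the twisted quantum alcove paths).
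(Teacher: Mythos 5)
Your proposal is correct and follows essentially the same route as the paper, which simply cites the proof of Corollary 2.19 in \cite{FM3}: the first inequality by iterating the exact sequences of Theorem \ref{stepofdecomposition} along the chain of modules with characteristics, and the second by induction on the antidominant weight using Theorem \ref{combinatorialdecomposition} and the translation property of $C_u^w$. The points you flag as remaining bookkeeping (matching branches with alcove paths, the quantum-edge degree shifts, and the $A_{2n}^{(2)}$ parity restriction on both sides) are exactly the checks delegated to the cited argument and to Theorem \ref{stepofdecomposition}$(i)$/Remark \ref{A2rec}.
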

\begin{proof}
The proof is the same as the proof of Corollary 2.19 from \cite{FM3}.
\end{proof}

%We denote by $E_\la(1,1,0)$ the specialization of the Macdonald polynomials at $t=0$, $q=1$ and all $x_i=x^{\om_i}=1$.
%\begin{rem}
%In the following theorem we need to know that $\dim W(\omega_i)=E_{w_0 \omega_i}(1,1,0)$ for all fundamental weights.
%This is know thanks to \cite{CI}, Theorem 4.2 (which addresses the case of general dominant weight instead of the fundamental weights).
%\end{rem}
\begin{thm}\label{equalityininequatity}
The inequalities of Corollary \ref{decompositioninequality} are in fact the equalities.
\end{thm}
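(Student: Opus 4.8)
The plan is to upgrade the coefficient-wise inequalities of Corollary \ref{decompositioninequality} into equalities by showing that both sides have the same total dimension, so that no slack can hide in any graded piece. First I would set $\mu=\sigma(\lambda)$ and observe that the second inequality ${\rm ch}\,W_{\sigma(\lambda)}\le C_\sigma^{t_\lambda}$ specializes, upon setting $x=q=1$, to a numerical bound $\dim W_{\sigma(\lambda)}\le |t\mathcal{QB}(\sigma,t_\lambda)|$; by Theorem \ref{specializationOS0}(i) together with the translation identity $C_{t_\mu u}^{w}=x^\mu C_u^{w}$, the right-hand side is controlled by the value of the Macdonald polynomial $E_\lambda(x;q,0)$ at $x=q=1$, which equals $\dim W(\bar w_0\lambda)$ for the ambient untwisted algebra $\fg[t]$. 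The key point is that Lemma \ref{lowerbound} supplies the reverse inequality $\dim W_{\sigma(\lambda_0)}\ge \dim W(\bar w_0\lambda)$, so the two bounds pinch together and force equality of dimensions.

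Concretely, the main steps, in order, are the following. I would first establish that the right-hand sides of the two inequalities in Corollary \ref{decompositioninequality} are consistent with each other: the combinatorial decomposition of Theorem \ref{combinatorialdecomposition} guarantees that summing the fundamental-weight bound over alcove paths $p\in t\mathcal{QB}(\sigma,\lambda,\bar\beta^{i,\lambda})$ reproduces exactly $C_\sigma^{t_{\lambda-\omega_i}}$, so the inequalities form a compatible family under the decomposition procedure. Then I would run an induction on $-\langle\lambda,\alpha_i^\vee\rangle$, i.e. peel off fundamental weights one at a time via the surjections $W_{\sigma(\lambda)}(\bar\beta^i,m)\to W_{\sigma(\lambda)}(\bar\beta^i,m+1)$ of Theorem \ref{stepofdecomposition}, with the base case $\lambda=0$ (where $W_\sigma=\bC v$ and $C_\sigma^{t_0}=x^{0}$) being trivial. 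At each step Theorem \ref{stepofdecomposition}(ii) gives an exact sequence whose outer terms are, by induction, modules whose characters already equal the corresponding $C$-functions; comparing with the combinatorial recursion of Theorem \ref{combinatorialdecomposition} shows that equality propagates, provided the exact sequence is in fact short exact on the left, i.e. the leftmost map is injective.

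The crucial numerical input that closes the induction is the dimension count: combining the upper bound ${\rm ch}\,W_{\sigma(\lambda)}\le C_\sigma^{t_\lambda}$ evaluated at $x=q=1$ with the lower bound of Lemma \ref{lowerbound} (and the identification of $\dim C_\sigma^{t_\lambda}|_{x=q=1}$ with $\dim W(\bar w_0\lambda)$ via Theorem \ref{specializationOS0}), one gets
\[
\dim W_{\sigma(\lambda)}\le \dim C_\sigma^{t_\lambda}\big|_{x=q=1}=\dim W(\bar w_0\lambda)\le \dim W_{\sigma(\lambda_0)}=\dim W_{\sigma(\lambda)},
\]
so all intermediate inequalities are equalities. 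Since a coefficient-wise inequality ${\rm ch}\,W_{\sigma(\lambda)}\le C_\sigma^{t_\lambda}$ between two objects of the same total dimension must be an equality in every graded component, the theorem follows; the first inequality of Corollary \ref{decompositioninequality} then also becomes an equality by the compatibility established in the first step.

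I expect the main obstacle to be the dimension comparison itself, and in particular the identification $\dim C_\sigma^{t_\lambda}|_{x=q=1}=\dim W(\bar w_0\lambda)$. One must check that the Orr--Shimozono specialization in Theorem \ref{specializationOS0} is compatible with the inclusion \eqref{inclusiontonontwisted} of the twisted current algebra into $\fg[t]$, so that the Macdonald polynomial for $\fg_0^\vee$ at $q=1$ genuinely computes the dimension of the ambient untwisted Weyl module appearing in Lemma \ref{lowerbound}. The independence of $\dim W_{\sigma(\lambda)}$ from the choice of $\sigma$, asserted to follow as in Lemma 2.2 of \cite{FM3}, is needed to pass freely between $W_{\sigma(\lambda_0)}$ and $W_{\lambda_0}$, and the delicate point in types $A^{(2)}_{2n}$ (Remark \ref{A2rec}) — where certain quantum edges are forbidden — must be tracked so that the combinatorial side continues to match the representation-theoretic recursion.
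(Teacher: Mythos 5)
Your overall strategy --- pinching $\dim W_{\sigma(\lambda)}$ between the path count $C_\sigma^{t_\lambda}\big|_{x=q=1}$ coming from Corollary \ref{decompositioninequality} and the lower bound of Lemma \ref{lowerbound}, then observing that a coefficient-wise inequality of characters with equal total dimension must be an equality in every graded piece --- is exactly the route taken by the paper (following \cite{FM3}). However, in two places you flag a difficulty without supplying the ingredient that closes it, and these are precisely where the paper's proof does its actual work.

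First, the identification $C_\sigma^{t_\lambda}\big|_{x=q=1}=\dim W(\bar w_0\lambda)$ does not follow from Theorem \ref{specializationOS0}, which only converts the path count into $E_\lambda(1;1,0)$; identifying \emph{that} number with a Weyl-module dimension for general $\lambda$ is essentially Corollary \ref{Iongeneralization}, a consequence of the theorem you are proving, so it cannot be invoked here. The paper breaks this circle by citing Ion's result \cite{I} only for fundamental weights, $\dim W_{-\om_i}=E_{-\om_i}(1,1,0)$ (with the $A^{(2)}_{2n}$ analogue phrased via $w^\dag$), and then using the multiplicativity of both sides under the decomposition procedure --- Theorem \ref{combinatorialdecomposition} on the combinatorial side, Theorems \ref{stepofdecomposition} and \ref{combinatorialdecomposition} together with the product formula for $\dim W(\bar w_0\lambda)$ on the module side --- to reduce the general pinch to the fundamental case; this also shows $C_\sigma^{t_\lambda}(1,1)$ is independent of $\sigma$, which you use implicitly. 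Your proposal never names this external input, and without it the middle equality in your displayed chain is unproven.

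Second, in type $A^{(2)}_{2n}$ with $i=n$ the lower bound \eqref{dimensioninequation} is available only when $\langle\lambda,\alpha_n^\vee\rangle$ is even, so your one-step induction peeling off $\om_n$ cannot be closed by the dimension count at every stage. The paper handles this by running the double decomposition procedure (all subquotients of the form $W_{\kappa(\lambda+2\om_n)}$), establishing equality after the two combined steps, and then deducing equality at each single step from the resulting sandwich of surjections. Your remark that the $A^{(2)}_{2n}$ restrictions ``must be tracked'' does not capture this; without the double-step argument the induction genuinely fails at the odd intermediate weights.
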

\begin{proof}
According to \cite{I} in dual untwisted types $\dim W_{-\omega_i}=E_{-\omega_i}(1,1,0)$ for all fundamental weights $\omega_i$.
In type $A^{(2)}_{2n}$ \cite{I} implies $\dim W_{-w^\dag\omega_i}=E_{-\omega_{n-i}}(1,1,0)$, $i\ne n$ and
$\dim W_{-w^\dag 2\omega_n}=E_{-\omega_0}(1,1,0)$. Now the rest of the proof follows the same lines as in \cite{FM3}.
The only special case is $A^{(2)}_{2n}$ and $i=n$. Then inequality \eqref{dimensioninequation} is valid only for
$\lambda$ such that $\langle \lambda, \alpha_n^\vee \rangle$ is even. So we are forced to use double decomposition procedure
(i.e. we consider the filtration such that all the subquotients are isomorphic to $W_{\kappa(\lambda +2\omega_n)}$).
However, since we know that inequalities of Corollary \ref{decompositioninequality} become equalities after 
the double application of the decomposition procedure, we obtain the equalities at each application. 
\end{proof}

\begin{thm}
Let $\la$ be an antidominant weight, $\sigma\in W$. Then ${\rm ch} W_{\sigma(\la)}=C_{\sigma}^{t_\la}$.
In addition, the sequence
\[
0\to W_{{\sigma}s_{{\rm Re} \beta_{m+1}}(\lambda)}(\bar\beta^i,m+1)\to  W_{\sigma(\lambda)}(\bar\beta^i,m)\to
W_{\sigma(\lambda)}(\bar\beta^i,m+1)\to 0
\]
is exact.
\end{thm}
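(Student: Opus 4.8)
The plan is to read both statements off the results already established in this section. The character identity ${\rm ch}\,W_{\sigma(\lambda)}=C_\sigma^{t_\lambda}$ is exactly the second of the two equalities asserted in Theorem \ref{equalityininequatity}: Corollary \ref{decompositioninequality} supplies the coefficient-wise bound ${\rm ch}\,W_{\sigma(\lambda)}\le C_\sigma^{t_\lambda}$, and Theorem \ref{equalityininequatity} promotes it to an equality. Thus the first sentence needs no new argument and I would merely cite these two results; all the remaining work concerns the short exact sequence.

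For the exact sequence I would compare characters along the whole filtration $W_{\sigma(\lambda)}=W_{\sigma(\lambda)}(\bar\beta^i,0)\to\cdots\to W_{\sigma(\lambda)}(\bar\beta^i,r)=W_{\sigma(\lambda+\omega_i)}$. At each step carrying an edge, Theorem \ref{stepofdecomposition}$(ii)$ already gives the right-exact sequence $W_{\sigma s_{{\rm Re}\beta_{m+1}}(\lambda)}(\bar\beta^i,m+1)\stackrel{f_m}{\to}W_{\sigma(\lambda)}(\bar\beta^i,m)\stackrel{g_m}{\to}W_{\sigma(\lambda)}(\bar\beta^i,m+1)\to 0$, in which $g_m$ is onto and ${\rm im}\,f_m=\ker g_m$. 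Hence the only point left for short exactness is the injectivity of $f_m$. Since $f_m$ is the action of the root vector $e_{\beta_{m+1}}$, it is a surjection onto its image that shifts degree by ${\rm deg}\,\beta_{m+1}$; consequently ${\rm ch}\,{\rm im}\,f_m\le q^{{\rm deg}\,\beta_{m+1}}{\rm ch}\,W_{\sigma s_{{\rm Re}\beta_{m+1}}(\lambda)}(\bar\beta^i,m+1)$, with equality if and only if $f_m$ is injective.

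I would then telescope. Iterating the right-exact sequences across the whole filtration, and using the isomorphisms of Theorem \ref{stepofdecomposition}$(i)$ at the steps with no edge (in type $A_{2n}^{(2)}$ these are precisely the steps excluded by the odd-parity restriction of Remark \ref{A2rec}), yields the coefficient-wise inequality ${\rm ch}\,W_{\sigma(\lambda)}\le\sum_{p}q^{{\rm deg}({\rm qwt}(p))}{\rm ch}\,W_{{\rm dir}({\rm end}(p))(\lambda+\omega_i)}^{{\rm wt}({\rm end}(p))}$, the sum ranging over the twisted quantum alcove paths from $\sigma$. By Theorem \ref{combinatorialdecomposition}, applied with $\lambda+\omega_i$ in place of $\lambda$ so that $C_\sigma^{t_\lambda}$ is decomposed through the values $C^{t_{\lambda+\omega_i}}$, together with the already-proven character identity at level $\lambda+\omega_i$, the right-hand side equals $C_\sigma^{t_\lambda}$. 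Since the left-hand side equals $C_\sigma^{t_\lambda}$ as well, the telescoped inequality is a term-by-term equality.

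Finally I would extract injectivity from this tightness. Equality in a sum of coefficient-wise inequalities forces equality in each summand, so every intermediate estimate appearing in the telescoping is sharp; in particular at each edge-step the decomposition ${\rm ch}\,W_{\sigma(\lambda)}(\bar\beta^i,m)={\rm ch}\,{\rm im}\,f_m+{\rm ch}\,W_{\sigma(\lambda)}(\bar\beta^i,m+1)$ matches the combinatorial splitting summand by summand. This forces ${\rm ch}\,{\rm im}\,f_m=q^{{\rm deg}\,\beta_{m+1}}{\rm ch}\,W_{\sigma s_{{\rm Re}\beta_{m+1}}(\lambda)}(\bar\beta^i,m+1)$, so $f_m$ is injective and the sequence is short exact. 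Every pair $(\sigma,m)$ is reached in this way, by passing to the quotient branch that keeps the direction equal to $\sigma$ at all edge-steps preceding $m$, so the conclusion holds for all $\sigma$ and $m$. The main obstacle is the bookkeeping: matching the degree shift ${\rm deg}\,\beta_{m+1}$ and the weight shift through the entire telescoping, and verifying in type $A_{2n}^{(2)}$ that the odd-parity quantum edges forbidden in the combinatorial sum coincide exactly with the isomorphism steps of Theorem \ref{stepofdecomposition}$(i)$, so that the module filtration and the alcove-path sum have matching terms.
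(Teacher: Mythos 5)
Your proposal is correct and follows essentially the same route as the paper: the paper's proof is a one-line reference to the analogous Theorem 2.21 of \cite{FM3}, whose argument is precisely your telescoping of the right-exact sequences of Theorem \ref{stepofdecomposition} against the character equality supplied by Theorem \ref{equalityininequatity} to force injectivity of each kernel map. The only ingredient the paper singles out as new in the twisted setting is the degree-bookkeeping identity ${\rm len}({\rm Re}\beta_{m+1}^i)\, l_{-{\rm Re}\beta_{m+1}^i,m}=\deg\beta_{m+1}^i$ of \eqref{ldeg}, which is exactly the ``degree shift'' you flag as the remaining bookkeeping.
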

\begin{proof}
The proof is similar to the proof of Theorem 2.21, \cite{FM3} modulo the following equality
\begin{equation}\label{ldeg}
{\rm len} ({\rm Re}\beta_{m+1}^i) l_{-{\rm Re}\beta_{m+1}^i,m}=\deg\beta_{m+1}^i
\end{equation}
(in the untwisted settings the factor ${\rm len} ({\rm Re}\beta_{m+1}^i)$ is missing in the left hand side).
\end{proof}

As a consequence, we obtain an alternative proof of the following claim (see \cite{CI}).
\begin{cor}\label{Iongeneralization}
Let $\lambda$ be a dominant weight. Then for any twisted current algebra $E_{-\lambda}(x,q,0)=\ch W(\lambda)$.
\end{cor}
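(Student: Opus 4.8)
The plan is to deduce Corollary \ref{Iongeneralization} directly from the immediately preceding Theorem together with part (i) of the Orr--Shimozono specialization, Theorem \ref{specializationOS0}. The key observation is that the corollary concerns a \emph{dominant} weight $\lambda$, while the machinery of the paper is phrased in terms of antidominant weights and the generalized Weyl modules $W_{\sigma(\mu)}$ for $\mu\in X_-$. So the first step is a translation of indices: I would set $\mu=-\lambda\in X_-$ and recall the Remark following Definition \ref{weylmodulesmix}, which states that the restriction of the classical Weyl module $W(\lambda)$ to $\fn^{af}$ is isomorphic to $W_{-\lambda}=W_{\mu}$ (using $w_0\lambda=-\lambda$ for non simply-laced algebras). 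This identifies the character $\ch W(\lambda)$ with $\ch W_{\mu}$ as characters of $\fn^{af}\oplus\fh$-modules, up to the standard weight bookkeeping.

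Next I would invoke the preceding Theorem with $\sigma=\mathrm{id}$ and the antidominant weight $\lambda$ replaced by $\mu$, which gives
\[
\ch W_{\mu}=C_{\mathrm{id}}^{t_{\mu}}.
\]
Then part (1) of Theorem \ref{specializationOS0}, applied to the antidominant weight $\mu$, yields $E_{\mu}(x;q,0)=C_{\mathrm{id}}^{t_{\mu}}$. Chaining these two equalities produces $\ch W_{\mu}=E_{\mu}(x;q,0)$, i.e.
\[
\ch W(\lambda)=E_{-\lambda}(x,q,0),
\]
which is exactly the assertion of the corollary. The only genuine content beyond quoting these two results is making sure the normalization conventions match: the character grading on $W_{\mu}$ (where $e_{\gamma+k\delta}$ raises degree by $k$) must agree with the $q$-grading built into the alcove-path definition of $C_{\mathrm{id}}^{t_{\mu}}$ in Definition \ref{Cdef}, and the $x$-weight $\mathrm{wt}(\mathrm{end}(p_J))$ must match the $\fg_0$-weight grading.

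The step I expect to be the main (though mild) obstacle is handling the type $A^{(2)}_{2n}$ case uniformly, since there the classical Weyl modules come in two flavors and the relevant identification is through Definition \ref{weylmodulesmix} and Proposition \ref{dagomega} rather than the plain Remark. In that type one must check that the dominant weight $\lambda$ and the scaling of $\alpha_n$ are matched correctly so that the generalized Weyl module attached to $\mathrm{id}$ and $-\lambda$ really does restrict from $W(\lambda)$, and that the tQBG of type $D^{(2)}_{n+1}$ with its parity restriction (Remark \ref{A2rec}) is the one computing $C_{\mathrm{id}}^{t_{-\lambda}}$; granting these, the same two-line argument applies. I would close by remarking that this reproves the result of \cite{CI} without recourse to Demazure modules, the equalities $\ch W_{\sigma(\la)}=C_\sigma^{t_\la}$ of the preceding Theorem being the essential new input.
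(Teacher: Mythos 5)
Your proposal is correct and is essentially the argument the paper intends: the corollary is stated as an immediate consequence of the preceding theorem (${\rm ch}\, W_{\sigma(\la)}=C_{\sigma}^{t_\la}$ with $\sigma=\mathrm{id}$), combined with Theorem \ref{specializationOS0}(1) and the Remark identifying $W(\lambda)|_{\fn^{af}}$ with $W_{-\lambda}$. Your additional care about the $A^{(2)}_{2n}$ conventions is a reasonable refinement of what the paper leaves implicit.
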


We also obtain the representation-theoretic interpretation of the specialization of nonsymmetric Macdonald polynomials at $t=\infty$.

\begin{cor}\label{representationinterpretationinfinity}
Let $\lambda$ be an antidominant weight. Then $E_{\lambda}(x,q^{-1},\infty)=\ch W_{-\lambda}.$
\end{cor}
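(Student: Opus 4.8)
The plan is to deduce the statement directly by combining the character formula $\ch W_{\sigma(\la)}=C_\sigma^{t_\la}$ established in the theorem above with part $(3)$ of the Orr--Shimozono specialization, Theorem \ref{specializationOS0}. Throughout, $\la$ is antidominant, and I use that every $\fg_0$ arising here (type $B_n$, $C_n$, $F_4$ or $G_2$) is non-simply-laced, so the longest element satisfies $w_0=-\Id$ on $X$. In particular $-\la=w_0(\la)$ is dominant and lies in the $W$-orbit of $\la$, whose antidominant representative is $\la$ itself.

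First I would identify $\ch W_{-\la}$ with an Orr--Shimozono function. Writing $-\la=w_0(\la)$ with $\la$ antidominant and applying the theorem above with $\sigma=w_0$ gives
\[
\ch W_{-\la}=\ch W_{w_0(\la)}=C_{w_0}^{t_\la}.
\]
On the Macdonald side, Theorem \ref{specializationOS0}$(3)$ gives $E_\la(x,q^{-1},\infty)=w_0\,C_{w_0}^{t_\la}$. Comparing the two expressions, the asserted equality $E_\la(x,q^{-1},\infty)=\ch W_{-\la}$ is equivalent to the $w_0$-invariance of the character $C_{w_0}^{t_\la}=\ch W_{-\la}$.

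Thus the only substantive point, and the main obstacle, is to show that $\ch W_{-\la}$ is symmetric under $w_0$, equivalently that the antidominant specialization $E_\la(x,q^{-1},\infty)$ is $W$-symmetric. I would establish this in parallel with the $t=0$ situation treated in Corollary \ref{Iongeneralization}: there $E_\mu(x,q,0)=\ch W_\mu$ for antidominant $\mu$ is $W$-symmetric because it coincides with the character of a classical, hence $\fg_0$-stable, Weyl module. The corresponding symmetry at $t=\infty$ for antidominant weights follows from the Macdonald theory recalled in Section \ref{OSformula}. Granting it, applying $w_0$ to the identity of Theorem \ref{specializationOS0}$(3)$ and using $w_0^2=\Id$ yields
\[
E_\la(x,q^{-1},\infty)=w_0\bigl(w_0\,C_{w_0}^{t_\la}\bigr)=C_{w_0}^{t_\la}=\ch W_{-\la},
\]
which is the claim. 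Everything outside the $w_0$-symmetry input is a direct substitution, so I expect the write-up to be short once that symmetry is in place.
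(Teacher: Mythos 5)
Your reduction is correct as far as it goes: granting $\ch W_{\sigma(\la)}=C_{\sigma}^{t_\la}$ and Theorem \ref{specializationOS0}$(iii)$, the corollary is equivalent to the $w_0$-invariance of $C_{w_0}^{t_\la}=\ch W_{-\la}$. But that invariance is exactly the substantive content, and you do not prove it — you only assert that it ``follows from the Macdonald theory recalled in Section \ref{OSformula}''. Nothing in Section \ref{OSformula} states any $W$-symmetry of $E_\la(x,q^{-1},\infty)$, and your analogy with the $t=0$ case does not transfer: at $t=0$ the relevant module is (the restriction of) a classical Weyl module, which is a $\fg_0$-module and hence has $W$-symmetric character, whereas $W_{-\la}$ for $-\la$ dominant is only an $\fn^{af}$-module. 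Indeed the paper's own computations in type $A_2^{(2)}$ give $\ch W_{2\om}=x^2+q+qx^2$, which is visibly not invariant under $w_0\colon x\mapsto x^{-1}$; so the symmetry you need cannot be taken for granted, and the step where you apply $w_0$ to both sides of $(iii)$ and drop it collapses.

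The paper's proof takes a different and more robust route: it invokes Theorem \ref{specializationOS0}$(ii)$, which expresses $E_\la(x,q^{-1},\infty)$ as a sum over paths in the \emph{reversed} twisted quantum Bruhat graph, and then uses the fact (from \cite{LNSSS2}) that $w_0$ reverses arrows in the tQBG. This identifies the reversed-graph paths of $(ii)$ with the forward paths computing $C_{w_0}^{t_\la}=\ch W_{w_0\la}=\ch W_{-\la}$ directly, term by term, with no symmetry statement required. To repair your argument you would either have to switch to part $(ii)$ and supply this arrow-reversal bijection, or else actually establish the $w_0$-invariance you are relying on — which, per the example above, is not available.
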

\begin{proof}
We use  Theorem  \ref{specializationOS0},(ii)  and the following result (see e.g. \cite{LNSSS2}):
the element  $w_0 \in W$ inverses arrows in the twisted quantum Bruhat graph.
\end{proof}

\begin{cor}
Consider the twisted current algebra of type $A_{2n}^{(2)}$. Then  for any $\sigma\in W$ one has
\[
\dim W_{\sigma\left(\sum_{i=1}^{n-1} m_i\om_i+2m_n\om_n\right)}=\prod_{i=1}^n \binom{2n+1}{i}^{m_i}.
\]
In particular, we obtain the dimension \eqref{A2dim} for the classical Weyl modules.
\end{cor}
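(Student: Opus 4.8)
The plan is to derive this dimension formula as a direct corollary of the character equality $\ch W_{\sigma(\la)}=C_\sigma^{t_\la}$ established in the preceding theorem, combined with the decomposition procedure of Theorem \ref{stepofdecomposition}. First I would reduce the dimension count to the case $\sigma=\mathrm{id}$: by part (i) of Theorem A (which holds with minor modifications in type $A_{2n}^{(2)}$, as stated in the text) the dimension $\dim W_{\sigma(\mu)}$ is independent of $\sigma\in W$, so it suffices to compute $\dim W_{\mu}$ for the antidominant representative $\mu=-\sum_{i=1}^{n-1}m_i\om_i-2m_n\om_n$, or equivalently to set $x=q=1$ in $\ch W_\mu=C_{\mathrm{id}}^{t_\mu}$. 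The dimension is then $\dim W_\mu = C_{\mathrm{id}}^{t_\mu}(1,1)$.

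Next I would establish multiplicativity of the dimension in the weight $\mu$, which is the mechanism producing the product $\prod_{i=1}^n\binom{2n+1}{i}^{m_i}$. The decomposition procedure organizes $W_{\sigma(\lambda-\om_i)}$ (equivalently $W_{\sigma(\lambda+\om_i)}$ by running the filtration in reverse) into subquotients of the form $W_{\kappa(\lambda)}$ indexed by alcove paths, and by Theorem A(3) the number of such subquotients equals $\dim W(\om_i)$, the dimension of the fundamental classical Weyl module. Concretely, Theorem \ref{combinatorialdecomposition} specialized at $x=q=1$ gives
\[
C_{\mathrm{id}}^{t_{\lambda+\omega_i}}(1,1)=\dim W(\om_i)\cdot C_{\mathrm{id}}^{t_\lambda}(1,1),
\]
since each of the $\dim W(\om_i)$ alcove paths contributes a term that degenerates to $C_{\mathrm{id}}^{t_\lambda}(1,1)$ at $x=q=1$ (the weight and $q$-degree factors become $1$). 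Iterating this over $i=1,\dots,n$ with multiplicities $m_i$ (using $2m_n$ copies of $\om_n$ reorganized as $m_n$ applications of the doubled step, as dictated by the parity restriction in type $A_{2n}^{(2)}$) yields $\dim W_\mu=\prod_{i=1}^n(\dim W(\om_i))^{m_i}$.

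The remaining input is the identification $\dim W(\om_i)=\binom{2n+1}{i}$ for the fundamental hyperspecial Weyl modules in type $A_{2n}^{(2)}$, which is the base case of the induction. These fundamental dimensions are known from \cite{CIK,I}: the relevant fundamental representation data matches the minuscule-type count $\binom{2n+1}{i}$, and the case $i=n$ (the delicate even-level case $\langle\lambda,\alpha_n^\vee\rangle\in2\mathbb{Z}$) is precisely where the double decomposition procedure in the proof of Theorem \ref{equalityininequatity} supplies the missing value. The main obstacle I anticipate is handling the $i=n$ step cleanly: here inequality \eqref{dimensioninequation} is only valid for even pairings, so the multiplicative step must be run through the doubled weight $2\om_n$ rather than $\om_n$, and one must verify that the doubled decomposition still contributes exactly the factor $\binom{2n+1}{n}$ per unit of $m_n$. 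Once this base case and the multiplicativity are in place, the product formula follows immediately, and specializing to the $\sigma$ for which $W_{\sigma(\mu)}$ restricts to the classical Weyl module (as in the Remark following Definition \ref{weylmodulesmix}) recovers \eqref{A2dim}.
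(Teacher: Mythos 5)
Your proposal is correct and follows essentially the same route as the paper: the decomposition procedure (equivalently, Theorem \ref{combinatorialdecomposition} specialized at $x=q=1$) yields multiplicativity of the dimension over the fundamental weights, with the $\omega_n$-direction handled through the doubled step $2\omega_n$, and the fundamental factors $\binom{2n+1}{i}$ are imported from the hyperspecial Weyl modules of \cite{CIK} via Proposition \ref{dagomega}. The paper states this more tersely but the content is identical.
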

\begin{proof}
The decomposition procedure tells us that
$$\dim W_{\sigma\left(\sum_{i=1}^{n-1} m_i\om_i+2m_n\om_n\right)}=(\dim W_{-2w^{\dag}\omega_n})^{m_n}\prod_{i=1}^{n-1}
(\dim W_{-w^{\dag}\omega_i})^{m_i}.$$
However using Proposition \ref{dagomega} and \cite{CIK}, Theorem 3 we know that $\dim W_{-w^{\dag}\omega_i}=\binom{2n+1}{i}$, $i<n$ and $\dim W_{-2w^{\dag}\omega_n}=\binom{2n+1}{n}$.
\end{proof}
\begin{rem}
Assume that $\langle\la,\al_n^\vee\rangle \in 2 \bZ+1$. Then the dimension of $W_{\sigma{\lambda}}$ depends on $\sigma$
(this already happens in type $A_2^{(2)}$, see subsection \eqref{A22}).
For $\sigma={\rm id}$ this dimension is known (see \cite{FK}).
\end{rem}

\section{Low rank cases}\label{LRC}
In this section we work out the case of rank one and rank two level zero algebras. The corresponding twisted affine algebras are of types
$A_2^{(2)}$, $D_3^{(2)}$, $A_4^{(2)}$ and $D_4^{(3)}$. We first prepare several facts to be used in the proofs.
\begin{rem}\label{ndelta}
In what follows we use the relation in $W_{\sigma(\lambda)}(\bar\beta^i,m)$: for any $\al\in\Delta_+$ and $n\ge 0$
\[
e_{\widehat{\sigma}(\al)+n\delta}^{l_{\al,m}+1}v=0.
\]
\end{rem}

\begin{rem}\label{BGG}
Let $\fg$ be a simple Lie algebra and $\fn\subset\fg$ its nilpotent subalgebra. Let $V$ be a cyclic $\fn$ module
with cyclic vector $v$ such that $e_{\al_i}^{m_i+1}v=0$ for any simple root $\al_i$, $m_i\ge 0$. Then
for any positive root $\al$ one has $e_\al^{\langle \sum m_i\om_i,\al^\vee\rangle+1}v=0$.
\end{rem}

\begin{lem}\label{simpleroots}
$(i)$Let $\mathfrak{g}$ be of arbitrary type. Let $\tau$ be a positive root and for $m\ge 0$ let $\eta=-{\rm Re} \beta^i_{m+1}$.
Assume that  $\eta$ and $s_{\eta}\tau$ are simple.
Then $m=0$ and
\begin{equation}\label{eqinBGG}
\widehat{\sigma s_{\eta}}(e_{\tau})^{l_{\tau,m+1}+1}
\widehat{\sigma}(e_{\eta})^{l_{\eta,m}} v =0.
\end{equation}
$(ii)$ If $m=0$ and there exists a subalgebra $N \subset \fn^{af}$ isomorphic to the finite dimensional
nilpotent subalgebra $\fn_0\subset\fg_0$ containing $\eta$ and $s_{\eta}\tau$, then relation \eqref{eqinBGG} holds.
\end{lem}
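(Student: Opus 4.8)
The plan is to prove $(i)$ first and then to reduce both statements to a computation inside a rank two nilpotent subalgebra. To see that $m=0$: by Proposition~\ref{descriptionbeta}$a)$ we have ${\rm Re}\beta^i_{m+1}\in\Delta_-$, so $\eta=-{\rm Re}\beta^i_{m+1}\in\Delta_+$, and $\eta$ simple means ${\rm Re}\beta^i_{m+1}=-\alpha_k$ for some simple $\alpha_k$. Proposition~\ref{descriptionbeta}$a)$ also tells us that the negative roots occurring as ${\rm Re}\beta^i_j$ are exactly the $\gamma\in\Delta_-$ with $\langle\gamma^\vee,\omega_i\rangle<0$; for $\gamma=-\alpha_k$ this forces $k=i$, and $-\alpha_i$ occurs with multiplicity one. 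By Corollary~\ref{order}$i)$ this single occurrence is $\beta_1^i$, hence $m+1=1$, i.e. $m=0$ and $\eta=\alpha_i$.

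Next I would put relation~\eqref{eqinBGG} into a workable form. Set $\tau'=s_\eta\tau$; it is simple and positive, so $(\sigma s_\eta)(\tau)=\sigma(\tau')$, and since reflections preserve normalized length one checks $\widehat{\sigma s_\eta}(\tau)=\widehat\sigma(\tau')$, that is $\widehat{\sigma s_\eta}(e_\tau)=\widehat\sigma(e_{\tau'})$. Because $\tau\ne\eta$ (otherwise $s_\eta\tau=-\eta$ would not be positive) and ${\rm Re}\beta_1^i=-\eta$, we get $l_{\tau,1}=-\langle\tau^\vee,\lambda\rangle=:r$, while $l_{\eta,0}=-\langle\eta^\vee,\lambda\rangle=:p$. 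Thus \eqref{eqinBGG} reads $\widehat\sigma(e_{\tau'})^{\,r+1}\widehat\sigma(e_\eta)^{\,p}v=0$. The inputs I will feed in are the two defining relations $\widehat\sigma(e_\eta)^{p+1}v=0$ and $\widehat\sigma(e_{\tau'})^{q+1}v=0$ with $q=-\langle\tau'^\vee,\lambda\rangle$, together with the numerical identity $r=q+pb$, $b=-\langle\eta,\tau'^\vee\rangle\ge0$, which comes from $\tau^\vee=s_\eta(\tau'^\vee)$.

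Now both parts reduce to a rank two computation among $\widehat\sigma(e_\eta)$, $\widehat\sigma(e_{\tau'})$ and their brackets. In part $(ii)$ the subalgebra $N\cong\fn_0$ is given, so I may transport the two vectors to genuine root vectors of $\fn_0$ and argue there through the commutation relations of $\fn_0$ (in the spirit of Remark~\ref{BGG}); in part $(i)$ simplicity of $\eta$ and $\tau'$ guarantees that the subalgebra of $\fn^{af}$ generated by $\widehat\sigma(e_\eta),\widehat\sigma(e_{\tau'})$ is nilpotent of the type of the subsystem $\langle\eta,\tau'\rangle$, playing the role of $N$. For a simply-laced subsystem the bracket $z=[\widehat\sigma(e_{\tau'}),\widehat\sigma(e_\eta)]$ is central in this subalgebra, which yields the Heisenberg identity $\widehat\sigma(e_{\tau'})^{\,r+1}\widehat\sigma(e_\eta)^{\,p}=\sum_{k=0}^{p}c_k\,z^{k}\,\widehat\sigma(e_\eta)^{\,p-k}\,\widehat\sigma(e_{\tau'})^{\,r+1-k}$; applied to $v$ each summand vanishes because $r+1-k\ge q+1$ and $\widehat\sigma(e_{\tau'})^{q+1}v=0$.

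The hard part will be the non-simply-laced rank two subsystems $B_2$, $C_2$, $G_2$ and especially type $A^{(2)}_{2n}$. For $B_2,C_2,G_2$ the bracket $z$ is no longer central, so the clean collapse above fails and I must track the full Chevalley structure constants along the relevant root strings, where Corollary~\ref{order}$iii)$ controls the admissible orderings; this is precisely the low rank analysis carried out in Section~\ref{LRC}. In type $A^{(2)}_{2n}$ the doubled short roots force the floor-function exponents of Definition~\ref{betasWeyl} and relation~\eqref{weylboundmix4}, so $p$, $q$, $r$ and the available vanishing powers must be recomputed case by case, and one must also check, as in Remark~\ref{ndelta}, that the vanishing persists under the extra $\delta$-shifts produced by the brackets. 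Assembling these rank two verifications then gives \eqref{eqinBGG} in full generality.
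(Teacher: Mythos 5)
Your reduction to $m=0$ is correct and is a useful elaboration of a point the paper leaves implicit: by Proposition \ref{descriptionbeta}$\,a)$ and Corollary \ref{order}$\,i)$ the only simple root occurring among the $-{\rm Re}\beta^i_j$ is $\alpha_i$, and it occurs exactly once, as $\beta^i_1$, so $m+1=1$. The identification $\widehat{\sigma s_\eta}(e_\tau)=\widehat\sigma(e_{s_\eta\tau})$ and the exponent bookkeeping ($l_{\tau,1}=-\langle\tau^\vee,\lambda\rangle$ and $r=q+pb$) are also right. However, there is a genuine gap in the final step. Your explicit commutator argument closes only the $A_1\oplus A_1$ and $A_2$ rank-two subsystems; for the subsystems of type $B_2$, $C_2$, $G_2$, and for the doubled short roots of $A^{(2)}_{2n}$, you merely announce that "the full Chevalley structure constants must be tracked" and point to Section \ref{LRC}. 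These are exactly the cases that carry the weight of the lemma (for twisted affine algebras $\fg_0$ is always non-simply-laced), and the appeal to Section \ref{LRC} is circular: the low-rank propositions there explicitly invoke Lemma \ref{simpleroots} as an ingredient. As written, your proposal therefore proves the statement only when the subsystem generated by $\eta$ and $s_\eta\tau$ is simply laced.

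The paper's proof avoids the case analysis entirely by a uniform representation-theoretic observation: since $\eta$ and $s_\eta\tau$ are simple, the elements $\widehat\sigma(e_\eta)$ and $\widehat\sigma(e_{s_\eta\tau})$ generate a subalgebra of $\fn^{af}$ isomorphic to the nilpotent subalgebra $\fn_0$ of the corresponding rank-two simple Lie algebra, and the cyclic module they generate from $v$ is the restriction of a simple finite-dimensional module. Relation \eqref{eqinBGG} is then the standard extremal-weight relation
\begin{equation*}
e_{\tau'}^{\langle s_\eta\mu,\tau'^\vee\rangle+1}\,e_\eta^{\langle\mu,\eta^\vee\rangle}\,v=0
\end{equation*}
valid in any simple module over any rank-two Cartan matrix, with no dependence on structure constants or on whether the subsystem is simply laced. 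If you wish to keep your computational route you must actually supply the $B_2$, $C_2$, $G_2$ and floor-function verifications; adopting the simple-module argument makes them unnecessary and handles part $(ii)$ by the same token.
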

\begin{proof}
Let us first prove $(i)$.
Consider
the subalgebra spanned by $\widehat{\sigma}(e_{\eta})$ and $\widehat{\sigma}(e_{s_{\eta}\tau})$.
Then this subalgebra is isomorphic to $\fn_0$ and we obtain the needed conditions because
the module generated by $\widehat{\sigma}(e_{\eta})$ and $\widehat{\sigma}(e_{s_{\eta}\tau})$ from $v$ is
isomorphic to the (restriction of) simple module over $\fg_0$. This
completes the proof of $(i)$. The proof of $(ii)$ is completely analogous.
\end{proof}

\subsection{Type $A_2^{(2)}$}\label{A22}
In this subsection we consider the case of twisted affine algebra of type $A^{(2)}_2$.
So we start with $\fg=\msl_3$ and consider the twisted current algebra $\fg[t]^{(2)}$.
The roots system of $\fg_0=\mathfrak{so}_3$ is equal to $\Delta=\lbrace \alpha, -\alpha \rbrace$ and the real roots
of $\fg[t]^{(2)}$ are of the form $\pm\al+k\delta$, $k\ge 0$ and $\pm 2\al+ (2k+1)\delta$, $k\ge 0$.
The imaginary roots are of the form $k\delta$ and each weight space $\fh_{k\delta}$ is one-dimensional.
The tQBG contains two vertices labeled by ${\rm id}$ and $s$ and two arrows pointing in both directions.
The element $\widehat{s}$ acts as the transposition
of roots $\alpha$ and $-2\alpha+\delta$ and of root vectors $e_{\alpha}$ and $e_{-2\alpha+ \delta}$.

The sequence $\bar\beta^1$ is of length one, i.e.
$\bar\beta^1=(\beta)$ and $\beta= -\alpha+\delta.$
In particular, ${\rm deg} \beta=1$.
For an antidominant weight $\la=-m\om$ one has ${\rm deg}\beta^{1,\la}={\rm deg} \beta + \langle -\al,\-m\om\rangle =m$.
For $m \geq 0$ the defining relations of the module $W_{-m\omega}$ are
\begin{gather*}
h v=0,\ h\in \fh_{k\delta},\ k \geq 1,\\
e_{-\al+k\delta} v=0, e_{-2\al+(2k-1)\delta}v=0, k > 0,\ e_{\alpha}^{m+1}v=0,
\end{gather*}
and the defining relations of the module $W_{m\omega}=W_{s(-m\om)}$ are
\begin{gather*}
h v=0,\ h\in \fh_{k\delta},\ k \geq 1,\\
e_{\al+k\delta} v=0, e_{2\al+(2k+1)\delta}v=0, k \geq 0,\ e_{-2\alpha+\delta}^{\left[\frac{m}{2}\right]+1}v=0.
\end{gather*}
By definition, $W_{2k\omega}\simeq W_{(2k+1)\omega}$. This proves Theorem \ref{stepofdecomposition}, $(i)$, since
the edge from $s$ to ${\rm id}$ is quantum and Remark \ref{A2rec} tells us that if ${\rm deg}\beta^{1,\la}$ is odd
(i.e. $\la=(-2k-1)\om$ for some $k$), then $W_{s(\la)}\simeq W_{s(\la+\om)}$.

Let us prove Theorem \ref{stepofdecomposition}, $(ii)$.
Let $v$ be the generator of the module $W_{-m\omega}$, $m\ge 0$. Then direct computation shows that
\begin{gather*}
(e_{-2\alpha+ \delta}) ^{\frac{m}{2}}e_{\alpha}^mv=0,\ m \text{ is  even},\\
(e_{-2\alpha+ \delta}) ^{\frac{m+1}{2}}e_{\alpha}^mv=0,\ m \text{ is  odd}.
\end{gather*}

Now let $v$ be the generator of the module $W_{m\omega}$, $m>0$. Then
\[e_{\alpha}^{m}(e_{-2\alpha+ \delta})^{\left[\frac{m}{2}\right]}v=0.\]
This completes the proof of Theorem \ref{stepofdecomposition}, $(ii)$.

Here are the characters of the first several Weyl modules ($x=x^\om$):
\begin{gather*}
{\rm ch} W_0=1,\ {\rm ch} W_\om=x,\ {\rm ch} W_{-\om}=x^{-1}+x,\ {\rm ch} W_{2\om}=x^2+q+qx^2,\\
{\rm ch} W_{-2\om}=x^{-2}+1+x^2,\ {\rm ch} W_{3\om}=x{\rm ch} W_{2\om},\\
{\rm ch} W_{-3\om}=x^{-3}+x^{-1}(1+q)+x(1+q)+x^3,\\
{\rm ch} W_{4\om}=x^4+(q+q^2)x^2+q+q^2+q^3+(q^2+q^3)x^{-2}+q^2x^{-4},\\
{\rm ch} W_{-4\om}=x^{-4}+x^{-2}(1+2q)+(1+q)+x^2(1+q)+x^4,\\
{\rm ch} W_{5\om}=x{\rm ch} W_{4\om}.
\end{gather*}

The dimensions of the generalized Weyl modules are given by the formula
\[
\dim W_{m\om}=
\begin{cases}
3^{m/2}, \ m \text{ is even and nonnegative},\\
3^{-m/2},\ m  \text{ is even and negative},\\
3^{(m-1)/2}, \ m \text{ is odd and positive},\\
2\cdot 3^{(-m+1)/2},\ m \text{ is odd and negative}.\\
\end{cases}
\]

\subsection{Type $D_3^{(2)}$}
The goal of this section is to prove Theorem \ref{stepofdecomposition} for $\fg$ of type $D_3^{(2)}$.

We denote by $\alpha_1$ the short simple root, by $\alpha_2$ the long simple root,
$\Delta_+=\lbrace \alpha_1, \alpha_2, \alpha_2+\alpha_1, \alpha_2+2\alpha_1 \rbrace$ and the set of corresponding coroots is
$\lbrace \alpha_1^\vee, \alpha_2^\vee, 2\alpha_2^\vee+\alpha_1^\vee, \alpha_2^\vee+\alpha_1^\vee\rbrace$.
We have the following sequences of $\beta$'s:
\[\beta^1_1=-\alpha_1+\delta, \beta^1_2=-2\alpha_1-\alpha_2+2\delta, \beta^1_3=-\alpha_1-\alpha_2+\delta;\]
\[\beta^2_1=-\alpha_2+2\delta, \beta^2_2=-\alpha_1-\alpha_2+2\delta, \beta^2_3=-2\alpha_1-\alpha_2+2\delta, \beta^2_4=-\alpha_1-\alpha_2+\delta.\]

The twisted quantum Bruhat graph is shown on Figure \ref{QBGC2}.

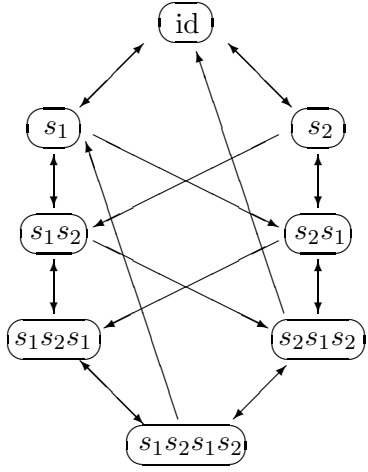
\begin{figure}\label{QBGC2}
\begin{picture}(200,200)

\put(100,180){\oval(20,15)}
\put(96,176){${\rm id}$}

\put(50,140){\oval(20,15)}
\put(46,138){$s_1$}

\put(150,140){\oval(20,15)}
\put(146,138){$s_2$}

\put(50,100){\oval(25,15)}
\put(41,98){$s_1s_2$}

\put(150,100){\oval(25,15)}
\put(141,98){$s_2s_1$}

\put(50,60){\oval(35,15)}
\put(35,58){$s_1s_2s_1$}

\put(150,60){\oval(35,15)}
\put(135,58){$s_2s_1s_2$}

\put(100,20){\oval(45,15)}
\put(82,18){$s_1s_2s_1s_2$}

\put(72,160){\vector(1,1){12}}
\put(72,160){\vector(-1,-1){12}}

\put(128,160){\vector(-1,1){12}}
\put(128,160){\vector(1,-1){12}}

\put(50,120){\vector(0,1){10}}
\put(50,120){\vector(0,-1){10}}

\put(150,120){\vector(0,1){10}}
\put(150,120){\vector(0,-1){10}}

\put(50,80){\vector(0,1){10}}
\put(50,80){\vector(0,-1){10}}

\put(150,80){\vector(0,1){10}}
\put(150,80){\vector(0,-1){10}}

\put(72,40){\vector(-1,1){12}}
\put(72,40){\vector(1,-1){12}}

\put(128,40){\vector(1,1){10}}
\put(128,40){\vector(-1,-1){10}}

\put(100,120){\vector(2,-1){35}}
\put(100,120){\line(-2,1){35}}

\put(100,80){\vector(2,-1){31}}
\put(100,80){\line(-2,1){35}}

\put(100,120){\vector(-2,-1){35}}
\put(100,120){\line(2,1){35}}

\put(100,80){\vector(-2,-1){31}}
\put(100,80){\line(2,1){35}}

\put(137,70){\vector(-1,3){33}}

\put(97,30){\vector(-1,3){35}}
\end{picture}
\caption{$tQBG$ of type $D_3^{(2)}$}
\end{figure}

\begin{prop}\label{edgesD32}
Let $\bar\beta^i$ be the sequence of $\beta$'s for some reduced decomposition of the element $t_{-\omega_i}$, $i=1,2$.
%We consider the module $W_{\sigma(\lambda)}(\bar\beta^i,m)$.
If there is no edge $\sigma \stackrel{{\rm Re} \beta_{m+1}}{\longrightarrow} \sigma s_{{\rm Re} \beta_{m+1}}$, then
\[W_{\sigma(\lambda)}(\bar\beta^i,m) \simeq W_{\sigma(\lambda)}(\bar\beta^i,m+1).\]
\end{prop}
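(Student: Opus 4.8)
The plan is to prove that the canonical surjection
\[
W_{\sigma(\lambda)}(\bar\beta^i,m)\twoheadrightarrow W_{\sigma(\lambda)}(\bar\beta^i,m+1)
\]
is an isomorphism, by showing that the single extra defining relation of the target already holds in the source. Set $\gamma=-{\rm Re}\,\beta_{m+1}^i\in\Delta_+$. Since $D_3^{(2)}$ is of dual untwisted type, no floor function occurs in Definition \ref{betasWeyl}, and moving from $m$ to $m+1$ only increases the count $|\{\beta_j\mid {\rm Re}\,\beta_j=-\gamma,\ j\le m\}|$ by one, hence lowers $l_{\gamma,m}$ by one. Thus the target is the quotient of the source by the relation $\widehat{\sigma}(e_\gamma)^{l_{\gamma,m}}v=0$, and it suffices to establish this relation in $W_{\sigma(\lambda)}(\bar\beta^i,m)$ under the hypothesis that the edge $\sigma\stackrel{\gamma}{\longrightarrow}\sigma s_\gamma$ is absent from the tQBG of type $D_3^{(2)}$.

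First I would apply Lemma \ref{edgesinQBG} to $\gamma$ to convert the absence of the edge into a dichotomy: either (a) there exist $\alpha,\beta\in\Delta_+\setminus\{\gamma\}$ satisfying both linear relations of Lemma \ref{edgesinQBG}$(ii)$, namely $\alpha+\beta$ proportional to $\gamma$ and $\widehat{\sigma}(\alpha)+\widehat{\sigma}(\beta)$ the corresponding multiple of $\widehat{\sigma}(\gamma)$; or (b) $\sigma\gamma\in\Delta_-$ and $\gamma$ is a long nonsimple root, i.e. $\gamma=2\alpha_1+\alpha_2$, the only such root of $B_2$. In $B_2$ the only identities of type (a) are $2\alpha_1+\alpha_2=\alpha_1+(\alpha_1+\alpha_2)$ (a long root as a sum of two short ones) and $2(\alpha_1+\alpha_2)=\alpha_2+(2\alpha_1+\alpha_2)$ (twice a short root as a sum of two long ones).

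In branch (a) I would pass to the subalgebra ${\fn^{af}}'\subset\fn^{af}$ attached to the rank two subsystem spanned by the two summand roots and apply Lemma \ref{subsystemofrank2}: the cyclic submodule ${\rm U}({\fn^{af}}')v$ is a quotient of a rank two generalized Weyl module $W_{\widetilde\sigma(\widetilde\lambda)}(\widetilde\beta,\widetilde m)$ with $\sigma{\rm Re}\,\beta_{m+1}=\widetilde\sigma{\rm Re}\,\widetilde\beta_{\widetilde m+1}$. Inside this rank two module the vanishing of $\widehat{\sigma}(e_\gamma)^{l_{\gamma,m}}$ is forced by the relations on $\widehat{\sigma}(e_\alpha)$ and $\widehat{\sigma}(e_\beta)$ through the BGG-type mechanism of Remark \ref{BGG} together with Lemma \ref{simpleroots}, exactly as in the untwisted $C_2$ computation of \cite{FM3}. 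The exponent matches because the count defining $l_{\gamma,m}$ is additive along the decomposition $\gamma=\tau+\eta$ (resp. $\tau+2\eta$); this is Corollary \ref{order}$(ii)$--$(iii)$ applied to the roots in place of the coroots, precisely the roots/coroots interchange used in the proof of Lemma \ref{subsystemofrank2}.

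The main obstacle is branch (b): here no positive decomposition with the required $\widehat\sigma$-compatibility is available, $\gamma$ is long, and $\sigma\gamma\in\Delta_-$, so $\widehat{\sigma}(e_\gamma)$ is an imaginary-shifted generator. The relation must instead be extracted from the vanishing relations $\widehat{\sigma}(e_{-\alpha+{\rm len}(\alpha)\delta})v=0$, the short-root relations, and the $\fh_{k\delta}$-relations, using the auxiliary relations of Remark \ref{ndelta}. The delicate point, absent in the untwisted setting, is the length--degree matching: one must verify the analogue of \eqref{ldeg}, namely $\deg\beta_{m+1}^i={\rm len}({\rm Re}\,\beta_{m+1}^i)\,l_{\gamma,m}$, so that the power of $\widehat{\sigma}(e_\gamma)$ produced from the explicit sequences $\beta^1_1,\beta^1_2,\beta^1_3$ and $\beta^2_1,\dots,\beta^2_4$ of this subsection carries exactly the exponent $l_{\gamma,m}$. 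I would close this branch by checking the finitely many pairs $(\sigma,m)$ for which the edge is missing directly against Figure \ref{QBGC2}, confirming in each case that the length factor makes the degrees line up.
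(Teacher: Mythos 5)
Your skeleton matches the paper's: reduce to proving the single extra relation $(\widehat\sigma e_{-{\rm Re}\beta^i_{m+1}})^{l_{-{\rm Re}\beta^i_{m+1},m}}v=0$ inside $W_{\sigma(\lambda)}(\bar\beta^i,m)$, invoke Lemma \ref{edgesinQBG}, and split into cases. But two steps do not go through as written. First, your enumeration of the type-(a) identities is incomplete: you omit $\gamma=\alpha_1+\alpha_2$ with $\tau=\alpha_1$, $\eta=\alpha_2$ (coefficient $1$), which is the first and principal case of the paper's argument; there one needs the inequality $l_{\alpha_1+\alpha_2,m}>l_{\alpha_2,m}+2l_{\alpha_1,m}$ coming from Corollary \ref{order}, $iii)$, and an application of Remark \ref{BGG} to a Borel-type subalgebra of type $C_2$ sitting inside $\fn^{af}$. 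Second, for the non-orthogonal decompositions (exactly the cases $\gamma=\alpha_1+\alpha_2$ and $\gamma=2\alpha_1+\alpha_2=\alpha_1+(\alpha_1+\alpha_2)$) the reduction via Lemma \ref{subsystemofrank2} is vacuous: the two summand roots span all of $\Delta$, so ${\fn^{af}}'=\fn^{af}$ and the lemma returns the module you started with. It buys something only for the orthogonal pair $\alpha_2$, $2\alpha_1+\alpha_2$, where the subsystem is of type $A_1\oplus A_1$.

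What the vacuous reduction hides is the actual content of the proof: for each configuration of $(\gamma,\tau,\eta,\sigma)$ one must exhibit, by hand, a finite-dimensional nilpotent subalgebra of $\fn^{af}$ spanned by suitable, in general $\delta$-shifted, root vectors --- for instance $\widehat\sigma e_{\alpha_1},\widehat\sigma e_{2\alpha_1+\alpha_2},\widehat\sigma e_{\alpha_1+\alpha_2},\widehat\sigma e_{-\alpha_2+\delta}$ when the span containing $\widehat\sigma e_{\alpha_2}$ fails to close under the bracket, or, in your branch (b), the vectors attached to $\sigma(\alpha_2)+2\delta$, $\widehat\sigma(\alpha_1+\alpha_2)+\delta$, $\widehat\sigma(2\alpha_1+\alpha_2)$, $\widehat\sigma(\alpha_1)$ together with Remark \ref{ndelta} --- and then apply Remark \ref{BGG} inside it with the exponent controlled by Corollary \ref{order}. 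Your branch (b) correctly identifies the $\delta$-shifted generator as the difficulty, but ``check the finitely many pairs against Figure \ref{QBGC2}'' does not supply this mechanism, and the thing that actually has to be verified case by case is not the degree bookkeeping of \eqref{ldeg} but which spans of twisted root vectors close under the Lie bracket. As written, the proposal settles only the orthogonal sub-case and defers the cases that carry the real content of the proposition.
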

\begin{proof}
Let $v$ be the cyclic vector of $W_{\sigma(\lambda)}(\bar\beta^i,m)$.
We need to prove the relation
\begin{equation}\label{desired}
(\widehat{\sigma}e_{- {\rm Re} \beta_{m+1}})^{l_{-{\rm Re} \beta_{m+1},m}}v=0
\end{equation}
assuming that there is no edge $\sigma \stackrel{{\rm Re} \beta_{m+1}}{\longrightarrow} \sigma s_{{\rm Re} \beta_{m+1}}$.

Lemma \ref{edgesinQBG} tells us that we need to consider two cases. Assume that
there are elements $\tau, \eta \in \Delta_+$ such that:
\[\tau, \eta \neq -{\rm Re} \beta_{m+1},\]
\[\tau + \eta=2\frac{\langle \tau,
{\rm Re} \beta_{m+1} \rangle}{\langle {\rm Re} \beta_{m+1}, {\rm Re} \beta_{m+1} \rangle}{\rm Re} \beta_{m+1},\]
\[\widehat{\sigma}(\tau) + \widehat{\sigma}(\eta)=
2\frac{\langle \tau, {\rm Re} \beta_{m+1} \rangle}{\langle {\rm Re} \beta_{m+1}, {\rm Re} \beta_{m+1} \rangle}\widehat{\sigma}
{\rm Re} \beta_{m+1}.\]
Then $-{\rm Re} \beta_{m+1}$ is equal to $\alpha_2+\alpha_1$ or  $\alpha_2+2\alpha_1$. Assume that $-{\rm Re} \beta_{m+1}=\alpha_2+\alpha_1$.
Then $\tau=\alpha_1$, $\eta=\alpha_2$ or $\tau=2\alpha_1+\alpha_2$, $\eta=\alpha_2$. We consider the first case, then
 $e_{\widehat{\sigma}(-{\rm Re}) \beta_{m+1}}$ is an element of the Lie algebra with simple root vectors
$e_{\widehat{\sigma}(\alpha_1)}$, $e_{\widehat{\sigma}(\alpha_2)}$. Using Corollary \ref{order}, $iii)$ we have that
$l_{{\rm Re} \beta_{m+1},m}>l_{\alpha_2,m}+2l_{\alpha_1,m}.$
Consider the Lie algebra spanned by vectors $e_{\widehat{\sigma}(\alpha_2)},e_{\widehat{\sigma}(\alpha_1)},
e_{\widehat{\sigma}(\alpha_1+\alpha_2)}, [e_{\widehat{\sigma}(\alpha_1+\alpha_2)}e_{\widehat{\sigma}(\alpha_1)}]$.
This subalgebra is isomorphic to the (finite-dimensional) Borel subalgebra of type $C_2$.
Thus using Remark \ref{BGG} we obtain that
$\widehat \sigma(e_{{\rm Re} \beta_{m+1}})^{l_{\alpha_2,m}+2l_{\alpha_1,m}+1}v=0$.
The second case ($\tau=2\alpha_1+\alpha_2$, $\eta=\alpha_2$) can be worked out in a similar way by the brute force computations.

Now assume that $-{\rm Re} \beta_{m+1}=\alpha_2+2\alpha_1$. Then $\tau=\alpha_1, \eta=\alpha_2+\alpha_1$.
Then either the linear span of
$\widehat\sigma e_{\alpha_1},\widehat\sigma e_{\alpha_2+2\alpha_1},\widehat\sigma e_{\alpha_2+\alpha_1},\widehat\sigma e_{\alpha_2}$
or the linear span of $
\widehat\sigma e_{\alpha_1},\widehat\sigma e_{\alpha_2+2\alpha_1},
\widehat\sigma e_{\alpha_2+\alpha_1},\widehat\sigma e_{-\alpha_2+ \delta}
$ is closed under the Lie bracket.
If the subspace spanned by
$\widehat\sigma e_{\alpha_1},\widehat\sigma e_{\alpha_2+2\alpha_1},\widehat\sigma e_{\alpha_2+\alpha_1},\widehat\sigma e_{\alpha_2}$
is closed under the Lie bracket then we can analogously to the previous case use Remark \ref{BGG}. Conversely, if
the subspace spanned by
\[
\widehat\sigma e_{\alpha_1},\widehat\sigma e_{\alpha_2+2\alpha_1},
\widehat\sigma e_{\alpha_2+\alpha_1},\widehat\sigma e_{-\alpha_2+ \delta}
\]
is closed under the Lie bracket, then the needed equation is equivalent to
\[
(\widehat\sigma e_{-\alpha_2+ \delta})^{l_{\alpha_2,m}+l_{\alpha_1,m}+1}
(\widehat\sigma e_{\alpha_2+\alpha_1})^{l_{\alpha_2+\alpha_1,m}+1}v=0.
\]

The remaining case is $\widehat\sigma(-{\rm Re} \beta_{m+1})\in \Delta_-$, $-{\rm Re} \beta_{m+1}=2\alpha_1 + \alpha_2$.
However in this case $\widehat\sigma (2\alpha_1 + \alpha_2)=\widehat\sigma (\alpha_1 + \alpha_2)+\widehat\sigma \alpha_1$
or $\widehat\sigma (2\alpha_1 + \alpha_2)=\widehat\sigma (\alpha_1 + \alpha_2)+\widehat\sigma \alpha_1+ \delta$.
If $\widehat\sigma (2\alpha_1 + \alpha_2)=\widehat\sigma (\alpha_1 + \alpha_2)+\widehat\sigma \alpha_1$, then
we are in the situation of the previous paragraph. Now assume that $\widehat\sigma (2\alpha_1 + \alpha_2)=
\widehat\sigma (\alpha_1 + \alpha_2)+\widehat\sigma \alpha_1+ \delta$. We know that exactly one of the elements
$\sigma(\al_1)$, $\sigma(\al_1+\al_2)$ is negative. We work out the case $\sigma(\al_1)\in\Delta_+$, the second case is very similar.
So assume that $\sigma(\al_1)\in\Delta_+$, $\sigma(\al_1+\al_2)\in\Delta_-$. Then the subalgebra of $\fn^{af}$
generated by the root vectors corresponding to roots
\[
\sigma(\al_2)+2\delta,\ \widehat{\sigma} (\al_1+\al_2) +\delta,\ \widehat{\sigma} (2\al_1+\al_2), \ \widehat{\sigma} (\al_1)
\]
is isomorphic to the nilpotent subalgebra of type $C_2$. Now the desired relation \eqref{desired} follows from the
Remark \ref{ndelta} and Remark \ref{BGG}.
\end{proof}

\begin{prop}\label{decompositionC_2}
Let $\bar\beta^i$ be the sequence of $\beta$'s for some reduced decomposition of the element $t_{-\omega_i}$, $i=1,2$.
Put $\eta=-{\rm Re} \beta_{m+1}$.
If there exists an edge $\sigma \stackrel{\eta}{\longrightarrow} \sigma s_{\eta}$,
then there exists a surjection
\[
W_{{\sigma}s_{\eta}(\lambda)}(\bar\beta^i,m+1) \twoheadrightarrow
{\rm U}(\fn^{af}) \widehat{\sigma}(e_{\eta})^{l_{\eta,m}} v.
\]
\end{prop}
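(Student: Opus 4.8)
The plan is to realize the surjection as the unique morphism of cyclic $\fn^{af}$-modules
\[
\phi\colon W_{\sigma s_\eta(\lambda)}(\bar\beta^i,m+1)\longrightarrow W_{\sigma(\lambda)}(\bar\beta^i,m)
\]
sending the generator to $w:=\widehat\sigma(e_\eta)^{l_{\eta,m}}v$, where $v$ is the cyclic vector of $W_{\sigma(\lambda)}(\bar\beta^i,m)$. Since the image of such a map is automatically the cyclic submodule ${\rm U}(\fn^{af})w$, everything reduces to \emph{well-definedness}: I must check that $w$ satisfies the defining relations of $W_{\sigma s_\eta(\lambda)}(\bar\beta^i,m+1)$, namely $\fh_{k\delta}w=0$, the vanishing relations $\widehat{\sigma s_\eta}(e_{-\tau+{\rm len}(\tau)\delta})w=0$, and the power relations $\widehat{\sigma s_\eta}(e_\tau)^{l_{\tau,m+1}+1}w=0$ for every $\tau\in\Delta_+$. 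The relation $\fh_{k\delta}w=0$ is immediate: commute $\fh_{k\delta}$ past the power of $\widehat\sigma(e_\eta)$, use $\fh_{k\delta}v=0$, and kill the degree-shifted commutator terms by Remark \ref{ndelta}.

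First I would dispose of the relations attached to $\eta$ itself, which hold by a rank-one argument independent of the $C_2$ geometry. Unwinding the definition of $\widehat{\,\cdot\,}$ in the two cases $\sigma\eta\in\Delta_+$ and $\sigma\eta\in\Delta_-$ yields the intertwining identities
\[
\widehat{\sigma s_\eta}(e_{-\eta+{\rm len}(\eta)\delta})=\widehat\sigma(e_\eta),\qquad
\widehat{\sigma s_\eta}(e_\eta)=\widehat\sigma(e_{-\eta+{\rm len}(\eta)\delta}),
\]
together with $l_{\eta,m+1}+1=l_{\eta,m}$ (because ${\rm Re}\beta_{m+1}=-\eta$). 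Hence the $\eta$-vanishing relation for $w$ becomes $\widehat\sigma(e_\eta)^{l_{\eta,m}+1}v=0$, which is a defining relation of $W_{\sigma(\lambda)}(\bar\beta^i,m)$, while the $\eta$-power relation becomes the rank-one identity $\widehat\sigma(e_{-\eta+{\rm len}(\eta)\delta})^{l_{\eta,m}}\widehat\sigma(e_\eta)^{l_{\eta,m}}v=0$ inside the affine $\msl_2$ spanned in the $\eta$-direction. This last identity is exactly a level-zero rank-one computation of the kind carried out explicitly in Subsection \ref{A22}: it follows from $\widehat\sigma(e_{-\eta+{\rm len}(\eta)\delta})v=0$, the shifted relations of Remark \ref{ndelta}, $\fh_{k\delta}v=0$, and the length/degree bookkeeping \eqref{ldeg}, which fixes the matching of the powers.

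For a root $\tau\ne\eta$ I would argue directly in the rank-two system $\Delta'=\mathbb{Z}\langle\eta,\tau\rangle\cap\Delta$, which for $\fg_0$ of type $C_2$ is of type $A_1\oplus A_1$, $A_2$ or $C_2$. The existence of the edge $\sigma\stackrel{\eta}{\longrightarrow}\sigma s_\eta$ enters through Lemma \ref{edgesinQBG}: it guarantees that no pair $\alpha,\beta\in\Delta_+$ obstructs the reflection and, when $\sigma\eta\in\Delta_-$, that $\eta$ is not a long nonsimple root of a positive rank-two subalgebra. With the obstructions excluded, the $\widehat\sigma$-images of the root vectors of $\Delta'$ span a copy of a finite-dimensional nilpotent subalgebra $N$ (a Borel nilradical of type $A_2$ or $C_2$). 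For $m=0$ this is precisely Lemma \ref{simpleroots}; for $m\ge1$ one mirrors the subalgebra analysis of Proposition \ref{edgesD32}, applying Remark \ref{BGG} to the generators $\widehat\sigma(e_\eta)$ and $\widehat\sigma(e_{s_\eta\tau})$ whose exponents are the $l$-values. The additivity of occurrence counts from Corollary \ref{order}, part $ii)$, then guarantees that the resulting BGG-exponent is at most $l_{\tau,m+1}+1$, giving the power relation; the vanishing relation for $\tau$ follows from the same $N$ after rewriting $\widehat{\sigma s_\eta}$ through $\widehat\sigma$ and absorbing $\delta$-shifts by Remark \ref{ndelta}.

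The main obstacle is the case of a \emph{quantum} edge, $\sigma\eta\in\Delta_-$. Then $s_\eta$ carries some positive roots to negative ones, so $\widehat{\sigma s_\eta}(e_\tau)$ and $\widehat{\sigma s_\eta}(e_{-\tau+{\rm len}(\tau)\delta})$ acquire extra $\delta$-shifts that must be matched against the prescribed exponents $l_{\tau,m+1}+1$. Checking that the exponents come out correctly forces the explicit $C_2$ bookkeeping: one needs the occurrence counts and the two $C_2$-concatenation patterns of Corollary \ref{order}, part $iii)$, the length/degree identity \eqref{ldeg}, and the closed-under-bracket dichotomy already isolated in Proposition \ref{edgesD32} (whether the span of $\widehat\sigma e_{\alpha_1},\widehat\sigma e_{\alpha_2+2\alpha_1},\widehat\sigma e_{\alpha_2+\alpha_1},\widehat\sigma e_{\alpha_2}$ or its variant involving $\widehat\sigma e_{-\alpha_2+\delta}$ is a subalgebra). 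This is the step that genuinely requires the low-rank computation rather than a formal reduction, and it is where the twisted case differs from the untwisted one of \cite{FM3} through the appearance of the ${\rm len}$-factors.
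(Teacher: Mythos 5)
Your overall setup matches the paper's: realize the surjection as the morphism of cyclic modules sending the generator to $w=\widehat\sigma(e_\eta)^{l_{\eta,m}}v$ and reduce everything to the single family of relations $\widehat{\sigma s_{\eta}}(e_{\tau})^{l_{\tau,m+1}+1}\widehat{\sigma}(e_{\eta})^{l_{\eta,m}} v =0$; your treatment of the case of two long roots (orthogonal, so $A_1\oplus A_1$ and Lemma \ref{subsystemofrank2} applies) and of the case where $\eta$ and $s_\eta\tau$ are simple (Lemma \ref{simpleroots}) is exactly what the paper does. But there is a genuine gap in how you handle nonsimple $\eta$, which is the bulk of the work: for the sequences $\bar\beta^1,\bar\beta^2$ of type $D_3^{(2)}$ every root $-{\rm Re}\beta_{m+1}$ with $m\ge 1$ is nonsimple, so this is not a marginal case. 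You propose to "mirror the subalgebra analysis" and apply Remark \ref{BGG} to the generators $\widehat\sigma(e_\eta)$ and $\widehat\sigma(e_{s_\eta\tau})$ for $m\ge 1$. That does not go through as stated: Remark \ref{BGG} bounds powers of root vectors acting on the \emph{cyclic} vector of a module on which the \emph{simple} root vectors act with prescribed nilpotency, and Lemma \ref{simpleroots} is explicitly restricted to $m=0$ because its proof identifies ${\rm U}(N)v$ with the restriction of a simple finite-dimensional $\fg_0$-module. For $m\ge 1$ the truncated exponents $l_{\al,m}$ destroy that identification, and the relation you need is a composite "Demazure-type" relation on the non-cyclic vector $w$, not a plain nilpotency bound, so neither tool applies.

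The paper's device for nonsimple $\eta$ is different and you do not mention it: a weight-space vanishing argument. For instance, with $\eta=2\al_1+\al_2$ and $\tau=\al_1+\al_2$, the vector $\widehat{\sigma}(e_{\tau})^{l_{\tau,m}+1}\widehat{\sigma}(e_{\eta})^{l_{\eta,m}}v$ would have weight $\sigma(\lambda)+l_{\eta,m}\sigma(2\al_1+\al_2)+l_{\tau,m}\sigma(-\al_1)$, while every weight of $W_{\sigma(\la)}(\bar\beta^i,m)$ is of the form $\sigma(\la)+k_1\sigma(\al_1)+k_2\sigma(\al_1+\al_2)$ with $k_1\ge -l_{\al_2,m}$ (since $\widehat\sigma(e_{\al_2})$ can be applied at most $l_{\al_2,m}$ times); the inequality $l_{\al_1+\al_2,m}\ge l_{2\al_1+\al_2,m}+l_{\al_2,m}$ coming from Corollary \ref{order}, part $ii)$, then shows this weight space is zero. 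You invoke the same Corollary but only to bound a "BGG exponent," which is not the role it plays here. To repair your argument you would need to either supply this weight argument or prove a genuine $m\ge 1$ extension of Lemma \ref{simpleroots}, neither of which is in your proposal. Your closing emphasis on the quantum-edge bookkeeping is reasonable but secondary to this missing step.
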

\begin{proof}
We need to prove the following equalities:
\begin{equation}\label{reflectionequalityC2}
\widehat{\sigma s_{\eta}}(e_{\tau})^{l_{\tau,m+1}+1}
\widehat{\sigma}(e_{\eta})^{l_{\eta,m}} v =0.
\end{equation}

Note first that if both
$\eta$ and  $\tau$ are long roots then $\mathbb{Z}\langle \eta, \tau \rangle\simeq A_1 \oplus A_1$.
Therefore we can use Proposition \ref{subsystemofrank2}.

If $\eta$ and $s_{\eta}\tau$ are simple then we obtain the needed relation using Lemma \ref{simpleroots}.

Assume now that $\eta$ is a short simple root and $\tau$ is nonsimple short root.
Then the only cases not covered by Lemma \ref{simpleroots}, $ii)$ are $\sigma=s_{\alpha}$ and $\sigma=w_0$. In these
cases $[\widehat{\sigma}(e_{-\alpha_2+ \delta}),\widehat{\sigma}(e_{\alpha_1+\alpha_2})]=\widehat{\sigma}(e_{\alpha_1})$.
Applying operators $\widehat{\sigma}(e_{\alpha_1+ \alpha_2})\widehat{\sigma}(e_{-\alpha_2+ \delta})$ and
$\widehat{\sigma}(e_{\alpha_1})$ to the vector $\widehat{\sigma}(e_{\alpha_1+ \alpha_2})^{m_1+m_2+1}v$ we obtain
that the vectors $\widehat{\sigma}(e_{\alpha_1+ \alpha_2})^{m_1+m_2+1}\widehat{\sigma}(e_{\alpha_1})v$
and $\widehat{\sigma}(e_{\alpha_1+ \alpha_2})^{m_1+m_2}\widehat{\sigma}(e_{2\alpha_1+ \alpha_2})v$ are equal to zero.
Continuing such computations we obtain the needed relations. In the same way we obtain the needed relations
for the remaining cases, namely for $\eta$ and $\tau$ both simple roots.

Let $\eta$ be nonsimple. Then we obtain the needed relations by the weight reasons.
We work out one case in details, the other cases are very similar.

Let  $\eta={2\alpha_1+ \alpha_2}$ and
$\tau={\alpha_1+ \alpha_2}$. Then
\begin{equation}\label{wt}
wt(\widehat{\sigma}(e_{\tau})^{l_{\tau,m}+1}\widehat{\sigma}(e_{\eta})^{l_{\eta,m}}v)=
\sigma(\lambda)+l_{2 \alpha_1+\alpha_2,m}\sigma(2 \alpha_1+\alpha_2)+l_{\alpha_1+\alpha_2,m}\sigma(-\alpha_1).
\end{equation}
However, we know that there is no such weight space in $W_{\sigma(\la)}(\bar\beta^i,m)$. In fact,
the module $W_{\sigma(\la)}(\bar\beta^i,m)$ is generated by $\fn^{af}$ from $v$. The maximal number of times we can apply
$\widehat{\sigma}(\al_2)$ is equal to $l_{\al_2,m}$. Therefore all the weights of $W_{\sigma(\la)}(\bar\beta^i,m)$
are of the form $\sigma(\la)+k_1\sigma(\al_1)+k_2\sigma(\al_1+\al_2)$ (for some integer $k_1$ and $k_2$) with
$k_1\ge -l_{\al_2,m}$. But the right hand side of \eqref{wt} does not satisfy this condition, since
$l_{\al_1+\al_2,m}\ge l_{2\al_1+\al_2,m} + l_{\al_2,m}$.
\end{proof}

\subsection{Type $A^{(2)}_4$}
Consider now type $A^{(2)}_4$. Then the tQBG and the sequences $\bar\beta^1$, $\bar\beta^2$ are the same as in type $D^{(2)}_3$.
Let us fix an antidominant $\la$ and $i=1,2$ such that $\langle \la,\al_i\rangle<0$.
Assume that $\widehat{\sigma}(-{\rm Re} \beta^i_{m+1})$ is negative short and
${\rm deg}\beta^{i,\la+\om_i}_{m+1}=l$ is odd. Recall \eqref{ldeg} that $l=l_{-{\rm Re}\beta_{m+1}^i,m}$.
Therefore by definition the only difference between
$W_{\sigma(\lambda)}(\bar\beta^i, m+1)$ and $W_{\sigma(\lambda)}(\bar\beta^i, m)$ is the relation
$(\widehat{\sigma}e_{-{\rm Re} \beta^i_{m+1}})^{\left[\frac{l-1}{2}\right]+1}v=0$ instead of
$(\widehat{\sigma}e_{-{\rm Re} \beta^i_{m+1}})^{\left[\frac{l}{2}\right]+1}v=0$.
However for odd $l$ these relations coincide. Thus if $\widehat{\sigma}(-{\rm Re} \beta^i_{m+1})$ is negative short and
${\rm deg}\beta^{i,\la+\om_i}_{m+1}=l$ is odd, then $W_{\sigma(\lambda)}(\bar\beta^i, m+1)\simeq W_{\sigma(\lambda)}(\bar\beta^i, m)$.
The proof of all other conditions from Theorem \ref{stepofdecomposition}, $(i)$,
is straightforward.

\begin{prop}
Theorem \ref{stepofdecomposition}, $(ii)$ holds for twisted current algebra $A^{(2)}_4$.
\end{prop}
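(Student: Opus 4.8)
The plan is to mirror the proof of Proposition~\ref{decompositionC_2}, since the twisted quantum Bruhat graph and the sequences $\bar\beta^1,\bar\beta^2$ for $A^{(2)}_4$ are literally the same as in type $D^{(2)}_3$ (both are realized on the rank-two root system with one short simple root $\alpha_1$ and one long simple root $\alpha_2$). Write $\eta=-{\rm Re}\beta^i_{m+1}$. The right-hand map $W_{\sigma(\lambda)}(\bar\beta^i,m)\to W_{\sigma(\lambda)}(\bar\beta^i,m+1)$ is the canonical surjection coming from strengthening the bound attached to $\eta$, i.e. from the passage $l_{\eta,m+1}=l_{\eta,m}-1$ (with the floor of Definition~\ref{betasWeyl} taken when $\eta$ is short and $\sigma(\eta)\in\Delta_-$); its kernel is the cyclic submodule ${\rm U}(\fn^{af})\,\widehat\sigma(e_\eta)^{L}v$, where $L$ is the removed power. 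Thus, exactly as in the $D^{(2)}_3$ case, part $(ii)$ reduces to producing a surjection
\[
W_{\sigma s_\eta(\lambda)}(\bar\beta^i,m+1)\twoheadrightarrow {\rm U}(\fn^{af})\,\widehat\sigma(e_\eta)^{L}v,
\]
that is, to verifying that every defining relation of $W_{\sigma s_\eta(\lambda)}(\bar\beta^i,m+1)$ holds on the vector $\widehat\sigma(e_\eta)^{L}v$.

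First I would reduce the verification to the relations $\widehat{\sigma s_\eta}(e_\tau)^{E_\tau+1}\,\widehat\sigma(e_\eta)^{L}v=0$ for $\tau\in\Delta_+$, and split into the four cases of Proposition~\ref{decompositionC_2}. When $\eta$ and $\tau$ are both long, $\mathbb{Z}\langle\eta,\tau\rangle$ is of type $A_1\oplus A_1$ and the relation follows from Proposition~\ref{subsystemofrank2}. When $\eta$ and $s_\eta\tau$ are simple I would invoke Lemma~\ref{simpleroots}. When $\eta$ is nonsimple the relation holds for weight reasons, exactly as in the $\eta=2\alpha_1+\alpha_2$, $\tau=\alpha_1+\alpha_2$ computation of Proposition~\ref{decompositionC_2}. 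The remaining case, $\eta$ a short simple root and $\tau$ a nonsimple short root, is handled by the Heisenberg-type identity $[\widehat\sigma(e_{-\alpha_2+\delta}),\widehat\sigma(e_{\alpha_1+\alpha_2})]=\widehat\sigma(e_{\alpha_1})$ together with the inductive cancellation used there.

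The genuinely new feature, and the step I expect to be the main obstacle, is the floor-function bound of Definition~\ref{betasWeyl}: for a short root $\alpha$ with $\sigma(\alpha)\in\Delta_-$ the exponent is $[l_{\alpha,m}/2]+1$ rather than $l_{\alpha,m}+1$. Hence, in the cases where $\eta$ or $\tau$ is short with negative $\sigma$-image, the exponents $L$ and $E_\tau$ acquire halvings, and I must check that the halved bounds produced by the rank-two computation coincide with those prescribed by Definition~\ref{betasWeyl}. The key bookkeeping tool is \eqref{ldeg}, namely ${\rm len}({\rm Re}\beta^i_{m+1})\,l_{-{\rm Re}\beta^i_{m+1},m}=\deg\beta^i_{m+1}$, which supplies the extra factor ${\rm len}$ absent from the untwisted setting, together with the parity constraint of Remark~\ref{A2rec} governing which quantum edges are admissible in $A^{(2)}_{2n}$. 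The delicate subcase is the one where the floor appears simultaneously on the $\widehat\sigma(e_\eta)$-power and on the $\widehat{\sigma s_\eta}(e_\tau)$-power; once the parity analysis confirms that the two floored bounds are compatible --- the odd-degree quantum edges being precisely those already accounted for by part~$(i)$ and Remark~\ref{A2rec} --- the surjection follows and the exact sequence of Theorem~\ref{stepofdecomposition}$(ii)$ is established.
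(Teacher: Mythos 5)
Your overall architecture coincides with the paper's: reduce part $(ii)$ to the relations $\widehat{\sigma s_\eta}(e_\tau)^{E_\tau+1}\widehat\sigma(e_\eta)^{L}v=0$ on the generator of the kernel, dispose of nonsimple $\eta$ by the weight argument of the $D^{(2)}_3$ case (which forces $m=0$ in what remains), invoke Lemma \ref{subsystemofrank2} for the orthogonal pair of long roots $\{\alpha_2,\alpha_2+2\alpha_1\}$, and treat the cases with $s_\eta(\tau)$ simple by a BGG-type argument, leaving the leftovers to direct computation. Up to that point the proposal is a faithful match.

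The genuine gap is in your fourth case ($\eta$ a short simple root, $\tau$ a nonsimple short root), where you propose to transplant the $D^{(2)}_3$ identity $[\widehat\sigma(e_{-\alpha_2+\delta}),\widehat\sigma(e_{\alpha_1+\alpha_2})]=\widehat\sigma(e_{\alpha_1})$ together with its inductive cancellation. That identity fails in $A^{(2)}_4$: by Definition \ref{weylmodulesmix}, a short root $\alpha$ with $\sigma(\alpha)\in\Delta_-$ is sent to the \emph{doubled} affine root $2\sigma(\alpha)+\delta$, so for instance with $\sigma=w_0$ one has $\widehat{w_0}(e_{-\alpha_2+\delta})=e_{\alpha_2}$ and $\widehat{w_0}(e_{\alpha_1+\alpha_2})=e_{-2\epsilon_1+\delta}$, whose bracket lies in the root space of $-\epsilon_1-\epsilon_2+\delta$, while $\widehat{w_0}(e_{\alpha_1})=e_{-2\epsilon_2+\delta}$; the weights do not match, so the cancellation you want to import collapses. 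The correct replacement (and the point the paper makes) is that $\widehat\sigma(e_{\alpha_1})$ and $\widehat\sigma(e_{\alpha_2})$ still generate a nilpotent subalgebra of type $C_2$, but with the roles of short and long roots interchanged relative to $\fg_0$; applying Remark \ref{BGG} inside this subalgebra is what produces the floored exponents of Definition \ref{betasWeyl} automatically. Your closing paragraph correctly isolates the floor bounds as the crux, but treats them as a parity bookkeeping problem via \eqref{ldeg} and Remark \ref{A2rec} --- which govern part $(i)$, i.e.\ which edges may be used at all --- and leaves the verification conditional; the actual mechanism for part $(ii)$ is the change of the underlying rank-two subalgebra, not a parity count.
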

\begin{proof}
If $-{\rm Re} \beta^i_{m+1}$ is nonsimple then analogously to type $D^{(2)}_3$ we obtain the needed result
by the weight reasons. Thus we only need to consider the case $m=0$. Put $\eta=-{\rm Re} \beta^i_{m+1}$.
If $\eta=\alpha_2$ and $\tau=\alpha_2+2\alpha_1$, then we can use Lemma \ref{subsystemofrank2} to complete the proof.
Note that the vectors $\widehat{\sigma}(e_{\alpha_1})$ and $\widehat{\sigma}(e_{\alpha_2})$ generate the Lie algebra isomorphic
to the nilpotent subalgebra of type $C_2$ (the short and the long roots can be interchanged). Thus if $s_{\eta}(\tau)$ is simple,
then one can use Remark \ref{BGG}. All remaining cases can be obtained by a direct computation.
\end{proof}

\subsection{Type $D_4^{(3)}$}
Let $\fg^{af}$ be the affine Lie algebra of type $D_4^{(3)}$. Then the
subalgebra $\fg_0$ is of type $G_2$.
Denote by $\alpha_1$ the short simple root and by $\alpha_2$ the long simple root of $\fg_0$.
The tQBG of type $G_2$ can be obtained from QBG of type $G_2$ (see e.g. \cite{LL}, p.19, figure $2$) by interchanging $s_1$ and $s_2$
in the labels of all vertices and keeping all the arrows unchanged.
In words, the twisted quantum Bruhat graph looks as follows:
there are Bruhat edges from any element of length $p$ to any element of length $p+1$, $0 \leq p \leq 5$. There is quantum
edge from any element with the reduced decomposition $(\prod s_{i_k})s_j$ to the element $(\prod s_{i_k})$, $j, i_k \in \lbrace 1,2 \rbrace$,
from any element with the reduced decomposition $(\prod s_{i_k})s_2 s_1 s_2$ to the element $(\prod s_{i_k})$ and
from any element with the reduced decomposition $(\prod s_{i_k})s_1s_2 s_1 s_2s_1$ to the element $(\prod s_{i_k})$.

Using Proposition \ref{descriptionbeta} we obtain the following sequences $\bar \beta^1, \bar \beta^2$:
\begin{multline}\label{betasG21}
{\beta_1^1}=-\alpha_1+ \delta, {\beta_2^1}=-3\alpha_1-\alpha_2+3\delta,{\beta_3^1}=-2\alpha_1-\alpha_2+2 \delta,\\
{\beta_4^1}=-3\alpha_1-2\alpha_2+3\delta,{\beta_5^1}=-\alpha_2-\alpha_1+\delta,
{\beta_6^1}=-2\alpha_1-\alpha_2+ \delta.
\end{multline}

\begin{multline}\label{betasG22}
{\beta_1^2}=-\alpha_2+ 3\delta,{\beta_2^2}=-\alpha_2-\alpha_1+3 \delta, {\beta_3^2}=-3\alpha_1-2\alpha_2+6\delta,
{\beta_4^2}=-\alpha_2-2\alpha_1+3 \delta,\\
{\beta_5^2}=-\alpha_2-\alpha_1+2 \delta, {\beta_6^2}=-3\alpha_1-\alpha_2+3\delta,
{\beta_7^2}=-2\alpha_1-\alpha_2+2 \delta,\\
{\beta_8^2}=-3\alpha_1-2\alpha_2+3\delta, {\beta_9^2}=-\alpha_2-\alpha_1+\delta,
{\beta_{10}^2}=-2\alpha_1-\alpha_2+\delta.
\end{multline}

\begin{prop}
Assume that there is no edge $w \stackrel{\alpha}{\longrightarrow} w s_{{{\rm Re} \beta^i_{m+1}}}$. Then
\[W_{\sigma(\lambda)}(\bar\beta^i,m) \simeq W_{\sigma(\lambda)}(\bar\beta^i,m+1).\]
\end{prop}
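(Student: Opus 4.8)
The plan is to mimic the proof of Proposition \ref{edgesD32} for the rank-two system $G_2$, using the explicit sequences \eqref{betasG21}--\eqref{betasG22}. First I observe that the modules $W_{\sigma(\lambda)}(\bar\beta^i,m)$ and $W_{\sigma(\lambda)}(\bar\beta^i,m+1)$ have identical defining relations except for the exponent of $\widehat\sigma(e_\eta)$ annihilating the cyclic vector, where $\eta=-{\rm Re}\beta^i_{m+1}$. Indeed, by Definition \ref{betasWeyl} passing from $m$ to $m+1$ lowers $l_{\eta,m}$ by one (since the count $|\lbrace\beta_j|{\rm Re}\beta_j=-\eta, j\le m\rbrace|$ increases by one), while all other $l_{\gamma,\cdot}$ are unchanged. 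Hence the asserted isomorphism is equivalent to proving that the stronger relation
\[
(\widehat\sigma(e_\eta))^{l_{\eta,m}}v=0
\]
already holds in $W_{\sigma(\lambda)}(\bar\beta^i,m)$, under the hypothesis that there is no edge $\sigma\stackrel{\gamma}{\longrightarrow}\sigma s_\gamma$ with $\gamma={\rm Re}\beta^i_{m+1}$.

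Next I apply Lemma \ref{edgesinQBG}, whose failure of the edge condition produces one of two situations: either there exist $\tau,\rho\in\Delta_+\setminus\lbrace-\eta\rbrace$ with $\tau+\rho=2\frac{\langle\tau,\gamma\rangle}{\langle\gamma,\gamma\rangle}\gamma$ and $\widehat\sigma(\tau)+\widehat\sigma(\rho)=2\frac{\langle\tau,\gamma\rangle}{\langle\gamma,\gamma\rangle}\widehat\sigma(\gamma)$ (the additive obstruction), or $\sigma\gamma\in\Delta_-$ and $\gamma$ is a long nonsimple root in a rank-two subsystem generated by positive roots. I then run through the values of $\eta$ among the positive roots $\alpha_1,\alpha_2,\alpha_1+\alpha_2,2\alpha_1+\alpha_2,3\alpha_1+\alpha_2,3\alpha_1+2\alpha_2$ of $G_2$. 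Since in $G_2$ any two simple roots generate the whole algebra, the relevant finite-dimensional subalgebra of $\fn^{af}$ will be a copy of the nilradical $\fn_0$ of $G_2$ (or, in degenerate configurations, of an $A_2$ or $A_1\oplus A_1$ subsystem). For each additive obstruction I exhibit the finite-dimensional subalgebra spanned by the vectors $\widehat\sigma(e_\bullet)$ attached to the roots of such a subsystem, verify it is closed under the bracket, and apply Remark \ref{BGG}: the defining relations at step $m$ supply the ``simple generator'' bounds, and Remark \ref{BGG} upgrades them to a bound on the power of $\widehat\sigma(e_\eta)$, while Proposition \ref{descriptionbeta} and Corollary \ref{order} (parts $ii)$ and $iii)$) furnish exactly the additivity of the multiplicities $l_{\bullet,m}$ identifying that bound with $l_{\eta,m}$.

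For the second obstruction, where $\sigma\gamma\in\Delta_-$ and $\gamma$ is long nonsimple (so $\gamma\in\lbrace 3\alpha_1+\alpha_2,\,3\alpha_1+2\alpha_2\rbrace$), I argue as in the final paragraph of the proof of Proposition \ref{edgesD32}: writing $\gamma$ as a sum with one constituent sent to $\Delta_-$ by $\sigma$, the vector $\widehat\sigma(e_\eta)$ becomes a $\delta$-shifted generator, and I first normalize the generators using Remark \ref{ndelta} before applying Remark \ref{BGG} inside a subalgebra isomorphic to $\fn_0$ of $G_2$ (or of $C_2$). The step I expect to be the main obstacle is the combinatorial bookkeeping rather than any conceptual difficulty: $G_2$ admits several inequivalent additive decompositions of its higher roots (for instance $3\alpha_1+2\alpha_2=(3\alpha_1+\alpha_2)+\alpha_2=(2\alpha_1+\alpha_2)+(\alpha_1+\alpha_2)$), so one must enumerate all of them, determine in each case which collection of $\widehat\sigma(e_\bullet)$ is closed under the Lie bracket and whether plain or $\delta$-shifted root vectors occur according to the signs of the various $\sigma(\cdot)$, and then confirm that the exponent delivered by Remark \ref{BGG} equals $l_{\eta,m}$ via the multiplicity data of Proposition \ref{descriptionbeta}. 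This requires no new ideas beyond those already used in the $D_3^{(2)}$ and $A_4^{(2)}$ cases; it is simply more laborious because of the richer root combinatorics of $G_2$.
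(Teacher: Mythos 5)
Your proposal follows essentially the same route as the paper's proof: reduce the isomorphism to the single extra relation $(\widehat\sigma(e_\eta))^{l_{\eta,m}}v=0$, invoke Lemma \ref{edgesinQBG} to split into the additive obstruction and the long-nonsimple-root obstruction, and in each case exhibit a bracket-closed finite-dimensional subalgebra (with $\delta$-shifted generators handled via Remark \ref{ndelta}) so that Remark \ref{BGG} together with the multiplicity additivity of Corollary \ref{order} yields the required exponent. The paper executes exactly this case analysis over the positive roots of $G_2$, resolving the residual configurations ($\sigma=w_0$ or $\sigma=s_{2\alpha_1+3\alpha_2}$) by direct computation, just as you anticipate.
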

\begin{proof}
Lemma \ref{edgesinQBG} tells us that we need to consider two cases. Assume that
there exist elements $\tau, \eta \in \Delta_+$ such that
\begin{gather*}
\tau, \eta \neq -{\rm Re} \beta^i_{m+1},\\
\tau + \eta=2\frac{\langle \tau,
{\rm Re} \beta^i_{m+1} \rangle}{\langle {\rm Re} \beta^i_{m+1}, {\rm Re} \beta^i_{m+1} \rangle}{\rm Re} \beta^i_{m+1},\\
\widehat{\sigma}(\tau) + \widehat{\sigma}(\eta)=
2\frac{\langle \tau, {\rm Re} \beta^i_{m+1} \rangle}{\langle {\rm Re} \beta^i_{m+1}, {\rm Re} \beta^i_{m+1} \rangle}\widehat{\sigma}
{\rm Re} \beta^i_{m+1}.
\end{gather*}
Then we have two cases:
\begin{gather*}
\tau + \eta=-{\rm Re} \beta^i_{m+1},\\
\widehat{\sigma}(\tau) + \widehat{\sigma}(\eta)= -\widehat{\sigma} {\rm Re} \beta^i_{m+1}
\end{gather*}
or
\begin{gather*}
\tau + \eta=-3{\rm Re} \beta^i_{m+1},\\
\widehat{\sigma}(\tau) + \widehat{\sigma}(\eta)= -3\widehat{\sigma} {\rm Re} \beta^i_{m+1}.
\end{gather*}
Let us work out the first case (in the second case $\beta^i_{m+1}$ is short nonsimple and this case can be worked out by the brute force computation).
Assume that $-{\rm Re} \beta^i_{m+1}=\alpha_1+\alpha_2$. Then $m>0$ and
$l_{-{\rm Re}\beta^i_{m+1},m}>3l_{\alpha_2,m}+l_{\alpha_1,m}$. But using Remark \ref{BGG}
we have:
\[(\widehat{\sigma}(e_{-{\rm Re} \beta^i_{m+1}}))^{3l_{\alpha_1,m}+l_{\alpha_2,m}+1}v=0.\]

Now assume that $-{\rm Re} \beta^i_{m+1}=2\alpha_1+\alpha_2$. Then $-{\rm Re}l_{\beta^i_{m+1},m}>l_{\alpha_1+\alpha_2,m}+l_{\alpha_2,m}$.
 If the set $[\widehat{\sigma}(f_{\alpha_2}),\widehat{\sigma}(f_{\alpha_1})]=\widehat{\sigma}(f_{\alpha_1+\alpha_2})$,
 then using Remark \ref{BGG} we have:
\[(\widehat{\sigma}(e_{-{\rm Re} \beta^i_{m+1}}))^{l_{\alpha_1+\alpha_2,m}+l_{\alpha_2,m}}v=0.\]

Conversely we have that $[(e_{\widehat\sigma(-\alpha_1+\delta)}),\widehat\sigma(e_{\alpha_1+\alpha_2})]=\widehat\sigma(e_{\alpha_2})$
and using this fact we obtain:
\[\widehat{\sigma}(e_{-{\rm Re} \beta^i_{m+1}}))^{l_{\alpha_1+\alpha_2,m}+l_{\alpha_2,m}+1}v=0.\]

In the similar way we prove the claim for  $-{\rm Re} \beta^i_{m+1}=\alpha_1+3\alpha_2$ or $-{\rm Re} \beta^i_{m+1}=2\alpha_1+3\alpha_2$.

Now assume that there do not exist such $\tau$ and $\eta$ that
$-{\rm Re}\beta^i_{m+1}$ is nonsimple long and $\widehat \sigma (-{\rm Re}\beta^i_{m+1}) \in \Delta_-$.
Then the only possible cases are $\sigma=w_0$ or $\sigma=s_{2 \alpha_1+3 \alpha_2}$. Then using the direct computation we obtain
that $(\widehat \sigma e_{-{\rm Re}\beta^i_{m+1}})^{l_{-{\rm Re}\beta^i_{m+1},m}}$ lie in the left ideal generated by
$(\widehat \sigma e_{\alpha})^{l_{\alpha,m}}$,
$\alpha\neq -{\rm Re}\beta^i_{m+1}$.
\end{proof}

\begin{prop}
We consider a module $W_{\sigma(\lambda)}(\bar\beta^i,m)$. If there exists an edge
\[\sigma \stackrel{{\rm Re}\beta^i_{m+1}}{\longrightarrow} \sigma s_{{{\rm Re} \beta^i_{m+1}}}\]
in the quantum Bruhat graph, then $U(\mathfrak{n}^{af}) \widehat \sigma(e_{{-{\rm Re} \beta^i_{m+1}}})^{l_{-{\rm Re} \beta^i_{m+1},m}}v$ is the quotient module of
$W_{\sigma s_{{{\rm Re} \beta^i_{m+1}}}(\lambda)}$.
\end{prop}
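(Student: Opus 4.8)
The plan is to follow the proof of Proposition \ref{decompositionC_2} in spirit, replacing the rank two system $C_2$ by $G_2$. Write $\eta=-{\rm Re}\beta^i_{m+1}$ and $w=\widehat\sigma(e_\eta)^{l_{\eta,m}}v$, and note that $s_{{\rm Re}\beta^i_{m+1}}=s_\eta$. Since $W_{\sigma s_\eta(\lambda)}(\bar\beta^i,m+1)$ is itself a quotient of $W_{\sigma s_\eta(\lambda)}$, it suffices to show that $w$ satisfies the defining relations of $W_{\sigma s_\eta(\lambda)}(\bar\beta^i,m+1)$. The Cartan relations \eqref{weylvanishing0} and the vanishing relations on $\widehat{\sigma s_\eta}(e_{-\tau+{\rm len}(\tau)\delta})$ follow from those satisfied by $v$ as in the $C_2$ case, so the real content is the family of bounding relations
\[
\widehat{\sigma s_\eta}(e_\tau)^{l_{\tau,m+1}+1}\,\widehat\sigma(e_\eta)^{l_{\eta,m}}\,v=0,\qquad \tau\in\Delta_+ .
\]

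First I would dispose of the pairs $(\eta,\tau)$ for which $\Delta'=\mathbb{Z}\langle\eta,\tau\rangle\cap\Delta$ is a proper subsystem of $G_2$. If $\eta\perp\tau$ (necessarily one long and one short root) then $\Delta'$ is of type $A_1\oplus A_1$ and the relation is immediate from Lemma \ref{subsystemofrank2}. If $\eta$ and $\tau$ are both long, then $\Delta'$ is the long $A_2$ subsystem, and Lemma \ref{subsystemofrank2} reduces the relation to the untwisted $A_2^{(1)}$ computation of \cite{FM3}. The remaining pairs --- those involving at least one short root and not orthogonal --- span the whole lattice, so $\Delta'=G_2$, ${\fn^{af}}'=\fn^{af}$, and no reduction is available; these must be handled directly.

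For the genuine $G_2$ configurations I would argue as in Proposition \ref{decompositionC_2}. When $\eta$ and $s_\eta\tau$ are both simple, the relation is exactly \eqref{eqinBGG} from Lemma \ref{simpleroots}$(i)$; more generally, whenever $\widehat\sigma(e_\eta)$ and $\widehat\sigma(e_{s_\eta\tau})$ lie in a finite dimensional nilpotent subalgebra $N\subset\fn^{af}$ isomorphic to $\fn_0$, Lemma \ref{simpleroots}$(ii)$ applies. When $\eta$ is nonsimple I would use weight reasons, exactly as in the last paragraph of Proposition \ref{decompositionC_2}: the weight of $\widehat{\sigma s_\eta}(e_\tau)^{l_{\tau,m+1}+1}\widehat\sigma(e_\eta)^{l_{\eta,m}}v$ lies outside the support of $W_{\sigma(\lambda)}(\bar\beta^i,m)$, the necessary inequalities among the $l_{\bullet,m}$ being furnished by Corollary \ref{order}$(iii)$. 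In the intermediate cases ($\eta$ short simple with $\tau$ short nonsimple, and the various mixed short/long pairs generating $G_2$) I would write down the relevant commutator identities among the $\widehat\sigma(e_\bullet)$, exhibit the finite dimensional nilpotent subalgebra they generate, and then bound the power of $\widehat{\sigma s_\eta}(e_\tau)$ via Remark \ref{BGG}, invoking Remark \ref{ndelta} to control the operators attached to shifted imaginary degrees.

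I expect the main obstacle to be the configurations in which $\eta$ is a long nonsimple root with $\widehat\sigma(\eta)\in\Delta_-$, in exact analogy with the final paragraph of the preceding (no-edge) proposition. Because of the order three twist, the subalgebra of $\fn^{af}$ that closes under the bracket here mixes root vectors living at imaginary degrees $\delta,2\delta,3\delta$, and, just as in the $C_2$ analysis, there are competing candidate subspaces of which only one is Lie closed for a given $\sigma$. The work is to single out, for each such $\sigma$, the correct $G_2$-type nilpotent subalgebra; once this is done, the bounding relation follows from Remark \ref{BGG} together with the length-degree identity \eqref{ldeg}, the extra factors ${\rm len}({\rm Re}\beta^i_{m+1})$ being precisely what distinguishes this step from its untwisted counterpart.
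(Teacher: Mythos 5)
Your proposal follows essentially the same route as the paper: reduce to the bounding relations $\widehat{\sigma s_\eta}(e_\tau)^{l_{\tau,m+1}+1}\widehat\sigma(e_\eta)^{l_{\eta,m}}v=0$, dispose of orthogonal pairs and long--long pairs via Lemma \ref{subsystemofrank2} (types $A_1\oplus A_1$ and the long $A_2$ subsystem), handle nonsimple $\eta$ by weight arguments as in the $D_3^{(2)}$ case, and finish the simple-$\eta$ cases with Lemma \ref{simpleroots}, Remark \ref{BGG} and direct computation. The one small discrepancy is where you locate the main difficulty: the delicate search for the Lie-closed subalgebra attached to a long nonsimple $\eta$ with $\widehat\sigma(\eta)\in\Delta_-$ belongs to the preceding (no-edge) proposition, whereas here the paper dispatches all nonsimple $\eta$ uniformly by weight reasons, so your plan if anything does more work than needed.
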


\begin{proof}
Let $v_1=\hat \sigma(e_{{-{\rm Re} \beta^i_{m+1}}})^{l_{-{\rm Re} \beta^i_{m+1},m}}v$.
If $\langle {\rm Re} \beta^i_{m+1},\eta \rangle=0$, then it is easy to see that $[e_{-{\rm Re} \beta^i_{m+1}},e_\eta]=0$ and
thus $\mathbb{Z}\langle {\rm Re} \beta^i_{m+1},\tau \rangle\cap \Delta$ is the root system of type $A_1\oplus A_1$. Therefore
the claim is a consequence of the Lemma \ref{subsystemofrank2}.

If ${\rm Re} \beta^i_{m+1}$ is long, then for any long root $\tau \neq -{\rm Re} \beta^i_{m+1}$ we have that
$\mathbb{Z}\langle {\rm Re} \beta^i_{m+1},\tau \rangle\cap \Delta$ is a root system of type $A_2$.
Indeed, the Lie algebra spanned by all long roots of $G_2$ is isomorphic to $A_2$.
Hence the claim is a consequence of the Lemma \ref{subsystemofrank2}.

Consider the following cases:
$1)-{\rm Re} \beta^i_{m+1}=\alpha_2+2\alpha_1$, $2)-{\rm Re} \beta^i_{m+1}=2\alpha_2+3\alpha_1$,
$3)-{\rm Re} \beta^i_{m+1}=\alpha_2+\alpha_1, \tau \neq \alpha_1$, $4)-{\rm Re} \beta^i_{m+1}=\alpha_2+3\alpha_1, \tau \neq \alpha_2$. Then
the needed condition holds because of the weight reasons in the same way as in the proof for $D^{(2)}_3$.

Now assume that $-{\rm Re} \beta^i_{m+1}=\alpha_2$. Then $m=0$. Note that the cases of long $\tau$ or $\tau$ orthogonal to
$\alpha_1$  are already
covered. The case $\tau=\alpha_1+\alpha_2$ is covered by Remark \ref{BGG}. $2\alpha_1+\alpha_2$ is orthogonal to $\alpha_2$. Thus it
remains just to consider case $\tau=\alpha_1$. We need to prove the following equation:
\[\widehat{\sigma}(e_{\alpha_1+\alpha_2})^{m_1+1}\widehat{\sigma}(e_{\alpha_2})^{m_2}v=0.\]
This and all remaining cases can be proven by a direct computation.
\end{proof}

\section*{Acknowledgments}
We are grateful to Ghislain Fourier and Daniel Orr for useful remarks and discussions. 
The work was supported by the RSF-DFG grant 16-41-01013.

\end{document}